\numberwithin{equation}{section}
\newtheorem{satz}{Theorem}[section]
\newtheorem{proposition}[satz]{Proposition}
\newtheorem{lemma}[satz]{Lemma}
\newtheorem*{bemerkung*}{Remark}
\theoremstyle{remark}
\newtheorem*{beispiel*}{Example}
\DeclarePairedDelimiter	{\abs}		{\lvert}	{\rvert}
\DeclarePairedDelimiter	{\norm}		{\lVert}	{\rVert}
\renewcommand{\epsilon}{\varepsilon}
\def\R{{\mathbb R}}
\def\cs{\circ}
\def\Rns{\mathbb{R}^{n \times n}_\mathrm{sym}}
\def\pis{\mathrm{sym}}
\def\vec{\mathrm{vec}}
\def\tr{\mathrm{tr}}
\def\pist{\pi_\mathrm{St}}
\def\pis{\pi_\mathrm{sym}}
\def\rank{\mathrm{rank}}
\def\be{\begin{equation}}
\def\en{\end{equation}}
\def\bee{\begin{eqnarray*}}
\def\ene{\end{eqnarray*}}
\begin{document}

\title{Higher order concentration on Stiefel and Grassmann manifolds}

\author{Friedrich G\"{o}tze}
\address{Friedrich G\"otze, Faculty of Mathematics, Bielefeld University, Germany}
\email{goetze@math.uni-bielefeld.de}
\author{Holger Sambale}
\address{Holger Sambale, Faculty of Mathematics, Ruhr University Bochum, Germany}
\email{holger.sambale@rub.de}

\begin{abstract}
    We prove higher order concentration bounds for functions on Stiefel and Grassmann manifolds equipped with the uniform distribution. This partially extends previous work for functions on the unit sphere. Technically, our results are based on logarithmic Sobolev techniques for the uniform measures on the manifolds. Applications include Hanson--Wright type inequalities for Stiefel manifolds and concentration bounds for certain distance functions between subspaces of $\mathbb{R}^n$.
\end{abstract}

\subjclass{Primary 60E15, Secondary 28C10, 58C35}
\keywords{Concentration of measure phenomenon, Stiefel manifold, Grassmann manifold, logarithmic Sobolev inequality, Hanson-Wright inequality}
\thanks{This research was supported by the German Research Foundation via CRC 1283.}

\date{\today}

\maketitle

\section{Introduction}

In recent years, functions on the Stiefel manifold (i.\,e., the collection of all $d$-tuples of orthonormal vectors in $\mathbb{R}^n$) and on the closely related Grassmann manifold (the collection of all $d$-dimensional linear subspaces of $\mathbb{R}^n$) have attracted increasing attention from various fields of research. A central reason is that they admit a wealth of applications in various directions like data analysis, subspace estimation, computer vision or statistical learning. In addition, they have also been studied from more theoretical points of view, in particular in convex geometry but quite recently also in large deviations theory. For a selection of various results, see e.\,g.\ \cite{TVC08,HL08,WLT11,ZZJH18,LLY20} (data analysis) and \cite{LPT06,GKZ21,KP21} (including large deviation principles).

A frequent question which naturally arises in many applications is how to control the fluctuations of some (Stiefel or Grassmann) functional around a typical value like its expectation, i.\,e., one asks for suitable concentration of measure results. Often times, the functional is of Lipschitz-type, which in classical situations (e.\,g., functions of independent sub-Gaussian random variables or in presence of a log-Sobolev inequality) leads to sub-Gaussian tail bounds. For a brief review especially adapted to Stiefel and Grassmann manifolds, cf.\ Section \ref{CR} below. However, this approach fails if the functionals under consideration are no longer Lipschitz. Still, one may hope for useful concentration bounds if they are Lipschitz of a ``higher order'', which typically means that their higher order derivatives are absolutely bounded. A classical example for order two are quadratic forms. In this situation, various questions of interest include exponential moment bounds, refined tail bounds with various levels of decay and centering around non-deterministic quantities which may correspond to a suitable decomposition.

In this note, we provide higher order concentration results for Stiefel and Grassmann manifolds equipped with the uniform distribution. Higher order concentration has been studied in various settings in the past two decades, e.\,g.\ in \cite{La06} (polynomials in independent Gaussian variables) \cite{AW15} (measures satisfying certain Sobolev-type inequalities) as well as \cite{KV00,SS12,AL12,KL15,GSS21a,GSS21b}. In this paper, we especially continue the line of research begun in \cite{BCG17,GSS19}, where second and higher order concentration results for the unit sphere were established (noting that the unit sphere can be understood as a Stiefel manifold).

\subsection{Grassmann and Stiefel manifolds}
For any natural numbers $d \le n$, the Stiefel manifold $W_{n,d}$ is the set of all $d$-tupels of orthonormal vectors in $\mathbb{R}^n$. Clearly, $W_{n,d}$ may be written as
\[
W_{n,d} = \{A \in \mathbb{R}^{n \times d} \colon A^TA = I_d \},
\]
where $I_d \in \mathbb{R}^{d \times d}$ denotes the identity matrix. This is the representation we shall use throughout this note. $W_{n,d}$ is manifold of dimension $\mathrm{dim}(W_{n,d}) = nd - d(d+1)/2$. Obviously, $W_{n,1} = S^{n-1}$ (the unit sphere), $W_{n,n} = O(n)$ (the orthogonal group), and $W_{n,n-1}$ can be identified with the special orthogonal group $SO(n) = \{O \in O(n) \colon \mathrm{det}(O) = 1\}$.

We may equip $W_{n,d}$ with the subspace topology and distances inherited from $\mathbb{R}^{n \times d}$, including the scalar product on $\mathbb{R}^{n\times d}$ and the induced (Hilbert--Schmidt) norm
\[
\langle A, B\rangle:=\mathrm{tr}(A^T B),\qquad \lvert A \rvert^2 \equiv \lVert A\rVert_\mathrm{HS}^ 2= \mathrm{tr}(A^TA) = \sum_{i,j} A_{ij}^2.
\]
In particular, $W_{n,d}$ is a compact topological space. The product of the orthogonal groups $O(n) \times O(d) $ acts transitively on $W_{n,d}$ by the two-sided multiplication $O_n \times O_d \mapsto O_nAO_d$, turning $W_{n,d}$ into a homogeneous space. Hence, it may be equipped with a unique invariant (Haar) probability measure $\mu_{n,d}$ in the sense that if $A \sim \mu_{n,d}$ (i.\,e., $A$ has distribution $\mu_{n,d}$), $O_nAO_d \sim \mu_{n,d}$ for any $O_n \in O(n)$, $O_d \in O(d)$. We call $\mu_{n,d}$ the uniform distribution on $W_{n,d}$. If $G = (G_{ij})_{i,j=1}^{d,n}$ is an $n \times d$ random matrix whose entries are i.i.d.\ standard normal, then $G(G^TG)^{-1/2} \sim \mu_{n,d}$, see \cite[Lemma 3.1]{KPT20}.

There are several ways of introducing the closely related Grassmann manifold, i.\,e.\ the set of all $d$-dimensional subspaces of $\mathbb{R}^n$. For an overview, cf.\ e.\,g.\ \cite{BZA20}. For instance, identifying a subspace with its basis (which is unique up to orthogonal transformations), we may understand the Grassmann manifold as the quotient $W_{n,d}/O(d)$, identifying any two $X, X' \in W_{n,d}$ such that $X' = XO$ for some $O \in O(d)$. However, the calculus of (higher order) derivatives on these equivalence classes turns out to be troublesome, and therefore, we choose a different approach.

For any $A \in W_{n,d}$, $\pist(A) := AA^T \equiv P_A$ is a projection matrix of rank $d$ (more precisely, the projection onto the subspace a basis of which is given by the columns of $A$), and for any $O_d \in O(d)$, we have $P_{AO_d} = P_A$. Therefore, identifying elements of the Grassmannian with projection matrices, we may define
\[
G_{n,d} := \{P \in \Rns \colon P^2 = P, \ \rank(P) = d \},
\]
where $\Rns$ denotes the space of the symmetric $n \times n$ matrices. $G_{n,d}$ is a manifold of dimension $\mathrm{dim}(G_{n,d}) = d(n-d)$. If $d=1$, we get back the half-sphere (where $\theta$ and $-\theta$ are identified), which can also be regarded as the projective space $\mathbb{R}P^{n-1}$. Depending on the situation, we will either regard $G_{n,d}$ as a submanifold of $\Rns$ or $\mathbb{R}^{n \times n}$ (the latter is often more convenient when taking derivatives), equipping it with the inherited topology and distances similarly to the case of the Stiefel manifold (in particular, $G_{n,d}$ is compact).

Many of the properties of Stiefel manifolds can be extended to Grassmann manifolds. Especially, the group action $O(n) \ni O_n \mapsto P_{O_nA}$ turns $G_{n,d}$ into a symmetric (not just homogeneous) space. We denote the uniform distribution on $G_{n,d}$ by $\nu_{n,d}$. Clearly, $\nu_{n,d}$ is the pushforward of $\mu_{n,d}$ under the map $\pist$. If $G = (G_{ij})_{i,j=1}^{d,n}$ is an $n \times d$ random matrix whose entries are i.i.d.\ standard normal, it follows from the discussion above that $G(G^TG)^{-1}G^T \sim \nu_{n,d}$.

\subsection{Concentration results}\label{CR}
If $f \colon W_{n,d} \to \mathbb{R}$ is $L$-Lipschitz with mean $\mu_{n,d}(f)$ with respect to $\mu_{n,d}$, a standard concentration result (cf.\ e.\,g.\ \cite[p.\ 27]{Le01}) yields
\begin{equation}\label{Lipschitz}
\mu_{n,d}(f-\mu_{n,d}(f) \ge t) \le \exp(-(n-1)t^2/(8L^2)).
\end{equation}
By switching to the pushforward, the same result also holds for $(G_{n,d},\nu_{n,d})$ with the constant $8$ replaced by $16$ (cf.\ Section \ref{sec:Sob} for details, in particular Lemma \ref{pistLip}).

The aim of this note is to extend results of type \eqref{Lipschitz} to higher orders, involving $\mathcal{C}^k$ functions and derivatives up to order $k$. Here, a $\mathcal{C}^k$ function on $W_{n,d}$ (likewise, $G_{n,d}$) may be understood as a function which admits an extension to some open neighborhood of the manifold which is $\mathcal{C}^k$-smooth. Let us first fix some notation. If $A = (a_{i_1 \ldots i_k}) \in \mathbb{R}^{m^k}$ is any matrix or tensor of order $k$, we write
\begin{align*}
\lVert A \rVert_\mathrm{HS} &:= \Big(\sum_{i_1, \ldots, i_k} a_{i_1 \ldots i_k}^2\Big)^{1/2},\\
\lVert A \rVert_\mathrm{op} &:= \sup\Big\{\sum_{i_1, \ldots, i_k} a_{i_1 \ldots i_k} x^{(1)}_{i_1} \cdots x^{(k)}_{i_k} \colon x^{(j)} \in S^{m-1} \ \forall j \Big\}
\end{align*}
for the respective Hilbert--Schmidt and operator type norms. For any function $g \colon W_{n,d} \to \mathbb{R}$ and any $p \ge 1$, we denote by $\lVert g \rVert_p$ the $L^p$ norm of $g$ with respect to $\mu_{n,d}$. If $g$ is matrix-valued (e.\,g., some tensor of derivatives), we use the short-hand notation
\[
\lVert g \rVert_{\mathrm{HS},p} := \lVert \lVert g \rVert_\mathrm{HS} \rVert_p = \Big(\int_{W_{n,d}} \lVert g \rVert_\mathrm{HS}^p d\mu_{n,d}\Big)^{1/p}
\]
and similarly $\lVert g \rVert_{\mathrm{op},p}$.

In Section \ref{sec:DerSt}, we will introduce a notion of differentiability for functions on $W_{n,d}$. In particular, using the intrinsic (Stiefel) gradient $\nabla_W f \in \mathbb{R}^{n \times d}$ and assuming $f \colon W_{n,d} \to \mathbb{R}$ to be $\mathcal{C}^1$-smooth, we may reformulate \eqref{Lipschitz} as
\begin{equation}\label{LipschitzAbl}
\mu_{n,d}(f-\mu_{n,d}(f) \ge t) \le \exp\Big(-\frac{(n-1)t^2}{8\lVert \nabla_W f \rVert_{\mathrm{HS},\infty}^2}\Big).
\end{equation}
The same result holds for $\mathcal{C}^1$-smooth functions on $(G_{n,d},\nu_{n,d})$, involving the Grassmann gradient $\nabla_G f \in \Rns$ and with $8$ replaced by $16$.

Our first theorem complements \eqref{LipschitzAbl} by a second order concentration bound for functions on Stiefel manifolds of any order $d \le n-1$ in the spirit of \cite{BCG17} (cf.\ also \cite{SS21}, where this approach to second order concentration has been generalized and adapted to a wealth of different situations). In addition to the intrinsic gradient, this also involves an intrinsic Hessian $f''_W \in \mathbb{R}^{nd \times nd}$, a notion to be made precise in Section \ref{sec:DerSt} as well.

\begin{satz}\label{Conc2ndOr}
Let $f \colon W_{n,d} \to \mathbb{R}$ be a $\mathcal{C}^2$-smooth function such that $\mu_{n,d}(f) = 0$.
\begin{enumerate}
    \item Assuming $\lVert \nabla_W f \rVert_{\mathrm{HS},2} \le 2/\sqrt{n-2}$ and $\lVert f''_W \rVert_{\mathrm{op},\infty} \le 1$, we have
    \[
    \int_{W_{n,d}} \exp\Big(\frac{n-2}{32e} |f|\Big) d\mu_{n,d} \le 2.
    \]
    \item For $C = 16e^2/\log(2)$ and any $t \ge 0$,
    \[
    \mu_{n,d}(|f-\mu_{n,d}(f)| \ge t) \le 2 \exp\Big(-\frac{n-2}{C}\min\Big(\frac{t^2}{\lVert \nabla_W f \rVert_{\mathrm{HS},2}^2}, \frac{t}{\lVert f_W'' \rVert_{\mathrm{op},\infty}}\Big)\Big).
    \]
    \item If $\mu_{n,d}(\nabla_W f) = 0$, the bounds in (1) and (2) continue to hold with $\lVert \nabla_W f \rVert_{\mathrm{HS},2}$ replaced by $\sqrt{8/(n-2-8d)}\lVert f_W'' \rVert_{\mathrm{HS},2}$.
\end{enumerate}
\end{satz}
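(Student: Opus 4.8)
The strategy is to follow the logarithmic Sobolev approach of \cite{BCG17} adapted to the Stiefel setting. The key analytic input is a log-Sobolev inequality for $(W_{n,d},\mu_{n,d})$ of the form $\Ent_{\mu_{n,d}}(g^2) \le \frac{C_0}{n-1} \int \lvert \nabla_W g \rvert^2 \, d\mu_{n,d}$ (with the relevant constant coming from the curvature/Bakry--\'Emery bound for the uniform measure, analogous to $C_0 = 4$ on the sphere), which should be established in Section \ref{sec:Sob}. From this one derives, in the standard way (Herbst-type argument, or the ``$L^p$'' version used in \cite{BCG17}), a bound on $\lVert f - \mu_{n,d}(f)\rVert_p$ in terms of $\sqrt{p}\,\lVert \nabla_W f\rVert_{\mathrm{HS},p}$. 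The second order improvement comes from applying this first-order estimate not to $f$ but to the gradient: one controls $\lVert \nabla_W f \rVert_{\mathrm{HS},p}$ by $\lVert \nabla_W f\rVert_{\mathrm{HS},2}$ plus a term $\sqrt{p}\,\lVert f''_W\rVert_{\mathrm{op},\infty}$ (this is where the intrinsic Hessian enters, and where one must be careful that differentiating the intrinsic gradient on the manifold produces the intrinsic Hessian up to lower-order curvature terms that are absorbed into $\lVert \nabla_W f\rVert$). Iterating/combining these two estimates yields $\lVert f\rVert_p \le C_1 \sqrt{p}\,\lVert \nabla_W f\rVert_{\mathrm{HS},2} + C_2 \, p \, \lVert f''_W\rVert_{\mathrm{op},\infty}$ for all $p \ge 2$, which by a routine Markov/optimization argument over $p$ gives both the sub-exponential moment bound in (1) (under the stated normalizations, which make the two terms comparable) and the mixed tail bound in (2).

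For part (1), assuming $\lVert \nabla_W f\rVert_{\mathrm{HS},2} \le 2/\sqrt{n-2}$ and $\lVert f''_W\rVert_{\mathrm{op},\infty}\le 1$, the $L^p$ bound becomes $\lVert f\rVert_p \le C' p/(n-2)$, and summing the Taylor series of $\exp(\lambda\lvert f\rvert)$ term by term with $\lambda$ a small multiple of $(n-2)$ gives the constant $2$; the explicit constant $32e$ is obtained by tracking the best constant through Stirling and the geometric series. Part (2) follows from the same $L^p$ bound by Markov's inequality: for $t$ in the Gaussian regime one optimizes the $\sqrt{p}$ term, for large $t$ the linear-in-$p$ term, producing the $\min(t^2/\lVert\nabla_W f\rVert^2, t/\lVert f''_W\rVert)$ structure with $C = 16e^2/\log 2$.

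Part (3) is the recentered variant. When $\mu_{n,d}(\nabla_W f) = 0$, one applies the first-order log-Sobolev estimate directly to the vector-valued map $A \mapsto \nabla_W f(A)$ (componentwise, or via the Hilbert-space-valued log-Sobolev inequality), obtaining $\lVert \nabla_W f\rVert_{\mathrm{HS},2} \le c\, \lVert \nabla_W(\nabla_W f)\rVert \le c' \sqrt{1/(n-1)}\,\lVert f''_W\rVert_{\mathrm{HS},2}$; the loss of the factor involving $8d$ (the shift $n-2 \mapsto n-2-8d$) comes from the curvature/Ricci correction terms specific to the Stiefel manifold when commuting the gradient past itself, which contribute a term proportional to $d\,\lVert\nabla_W f\rVert$ that must be moved to the left-hand side and forces the dimensional restriction $n-2-8d > 0$. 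Substituting this bound into (1) and (2) gives the claim.

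The main obstacle I anticipate is not the probabilistic machinery but the \emph{differential geometry bookkeeping}: making precise the relation between the ambient derivatives of an extension of $f$, the intrinsic gradient $\nabla_W f$, and the intrinsic Hessian $f''_W$, and in particular controlling the curvature terms that appear when one differentiates $\nabla_W f$ a second time along the manifold. These terms are what produce the explicit numerical constants ($8$, $8d$, $32e$) and the need to pass from $\lVert\cdot\rVert_{\mathrm{op}}$ to $\lVert\cdot\rVert_{\mathrm{HS}}$ in part (3); getting them right requires the explicit formulas for the second fundamental form of $W_{n,d} \subset \mathbb{R}^{n\times d}$, which is presumably the content of Section \ref{sec:DerSt}. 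Once those identities are in hand, the rest is a direct transcription of the sphere argument of \cite{BCG17}.
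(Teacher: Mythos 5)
Your plan follows essentially the same route as the paper: a log-Sobolev inequality for $(W_{n,d},\mu_{n,d})$ (the paper gets constant $4/(n-2)$ by projecting from $SO(n)$), the Aida--Stroock $L^p$ inequality applied first to $f$ and then to $|\nabla_W f|$ using the bound $|\nabla_W|\nabla_W f||\le\lVert f''_W\rVert_{\mathrm{op}}$ from the intrinsic-Hessian calculus, followed by the standard moment-to-tail optimization; and part (3) is likewise obtained by applying the Poincar\'e inequality componentwise to the centered gradient, with the extra term proportional to $d\,|\nabla_W f|^2$ moved to the left-hand side, producing the $n-2-8d$ denominator. So the proposal is correct and matches the paper's argument in all essential respects.
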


Note that in (3), the integral $\mu_{n,d}(\nabla_W f)$ has to be understood componentwise. In fact, the condition in (3) can also be modified to $\lVert f_W'' \rVert_{\mathrm{HS},2} \le b$ similarly as in \cite[Theorem 1.1]{BCG17} (in particular, arriving at $b$-dependent bounds). We skip the details. With only minor modifications, these results also hold for functions on Grassmann manifolds:

\begin{satz}\label{Conc2ndOrGr}
Let $f \colon G_{n,d} \to \mathbb{R}$ be a $\mathcal{C}^2$-smooth function such that $\nu_{n,d}(f) = 0$.
\begin{enumerate}
    \item Assuming $\lVert \nabla_G f \rVert_{\mathrm{HS},2} \le \sqrt{8/(n-2)}$ and $\lVert f''_G \rVert_{\mathrm{op},\infty} \le 1$, we have
    \[
    \int_{G_{n,d}} \exp\Big(\frac{n-2}{64e} |f|\Big) d\mu_{n,d} \le 2.
    \]
    \item For $C = 32e^2/\log(2)$ and any $t \ge 0$,
    \[
    \nu_{n,d}(|f-\mu_{n,d}(f)| \ge t) \le 2 \exp\Big(-\frac{n-2}{C}\min\Big(\frac{t^2}{\lVert \nabla_G f \rVert_{\mathrm{HS},2}^2}, \frac{t}{\lVert f_G'' \rVert_{\mathrm{op},\infty}}\Big)\Big).
    \]
    \item If $\nu_{n,d}(\nabla_G f) = 0$, the bounds in (1) and (2) continue to hold with $\lVert \nabla_G f \rVert_{\mathrm{HS},2}$ replaced by $\sqrt{16/(n-2-16d)}\lVert f_G'' \rVert_{\mathrm{HS},2}$.
\end{enumerate}
\end{satz}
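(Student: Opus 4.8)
The plan is to reduce Theorem~\ref{Conc2ndOrGr} to Theorem~\ref{Conc2ndOr} by pushing forward along the map $\pi_{\mathrm{St}}\colon W_{n,d}\to G_{n,d}$, $A\mapsto AA^T$. Given $f\colon G_{n,d}\to\R$ of class $\mathcal{C}^2$ with $\nu_{n,d}(f)=0$, set $g:=f\circ\pi_{\mathrm{St}}$. Since $\pi_{\mathrm{St}}$ is smooth, $g$ is $\mathcal{C}^2$ on $W_{n,d}$; since $\nu_{n,d}=(\pi_{\mathrm{St}})_\ast\mu_{n,d}$, we have $\mu_{n,d}(g)=\nu_{n,d}(f)=0$ and $\int\Phi(g)\,d\mu_{n,d}=\int\Phi(f)\,d\nu_{n,d}$ for every measurable $\Phi$. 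In particular, the exponential integral in (1) and the tail bound in (2) are literally the same statements for $g$ under $\mu_{n,d}$ as for $f$ under $\nu_{n,d}$, so it suffices to (i) compare $\nabla_W g$ with $\nabla_G f$ and $g''_W$ with $f''_G$, and then (ii) apply Theorem~\ref{Conc2ndOr} to $g$, possibly after an elementary rescaling.

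For the first derivatives, recall from Lemma~\ref{pistLip} (see also Section~\ref{sec:Sob}) that the differential $D\pi_{\mathrm{St}}(A)\colon T_AW_{n,d}\to T_{P_A}G_{n,d}$ satisfies $\lVert D\pi_{\mathrm{St}}(A)H\rVert_{\mathrm{HS}}^2=2\lVert H\rVert_{\mathrm{HS}}^2-2\lVert A^TH\rVert_{\mathrm{HS}}^2$ for $H\in T_AW_{n,d}$, hence $\lVert D\pi_{\mathrm{St}}(A)\rVert_{\mathrm{op}}\le\sqrt2$; moreover $\ker D\pi_{\mathrm{St}}(A)=\{AS\colon S=-S^T\}$ is exactly the tangent space to the fibre through $A$, and on its orthogonal complement (where $A^TH=0$) the map $D\pi_{\mathrm{St}}(A)$ is $\sqrt2$ times an isometry onto $T_{P_A}G_{n,d}$. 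Since $\nabla_W g(A)=(D\pi_{\mathrm{St}}(A))^\ast\nabla_G f(P_A)$ and $\nabla_G f(P_A)$ lies in this image, it follows that $\lVert\nabla_W g(A)\rVert_{\mathrm{HS}}=\sqrt2\,\lVert\nabla_G f(P_A)\rVert_{\mathrm{HS}}$ pointwise, so $\lVert\nabla_W g\rVert_{\mathrm{HS},p}=\sqrt2\,\lVert\nabla_G f\rVert_{\mathrm{HS},p}$ for all $p\ge1$.

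The Hessian comparison is the crux. Differentiating $g=f\circ\pi_{\mathrm{St}}$ twice along geodesics of $W_{n,d}$ and using the constant bilinear form $D^2\pi_{\mathrm{St}}(H,K)=HK^T+KH^T$ together with the second fundamental forms of $W_{n,d}\hookrightarrow\R^{n\times d}$ and $G_{n,d}\hookrightarrow\Rns$ leads to a decomposition
\[
g''_W(A)=(D\pi_{\mathrm{St}}(A))^\ast\,f''_G(P_A)\,(D\pi_{\mathrm{St}}(A))+R(A),
\]
whose first summand has operator norm at most $\lVert D\pi_{\mathrm{St}}(A)\rVert_{\mathrm{op}}^2\lVert f''_G(P_A)\rVert_{\mathrm{op}}\le 2\lVert f''_G(P_A)\rVert_{\mathrm{op}}$ and, by the isometry noted above, Hilbert--Schmidt norm $2\lVert f''_G(P_A)\rVert_{\mathrm{HS}}$. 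The remainder $R(A)$ gathers the terms in which the gradient of $f$ is paired with $D^2\pi_{\mathrm{St}}$ and with the two second fundamental forms. The key geometric fact is that, up to the constant scaling factor $\sqrt2$, $\pi_{\mathrm{St}}$ is a Riemannian submersion with totally geodesic fibres (the $O(d)$-orbits $\{AO\colon O\in O(d)\}$); hence horizontal geodesics of $W_{n,d}$ project to geodesics of $G_{n,d}$, which makes $R(A)$ vanish identically when $d=1$ and, for $d\ge2$, reduces it to the horizontal--vertical cross terms, so that $\lVert R(A)\rVert_{\mathrm{op}}$ is at most a multiple of $\lVert\nabla_G f(P_A)\rVert_{\mathrm{HS}}$ controlled by the O'Neill tensor of the fibration. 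Establishing this bound precisely---showing that $R$ is of lower order and can be absorbed into the gradient contribution in the proof of Theorem~\ref{Conc2ndOr}---is the main obstacle; the rest is bookkeeping of constants.

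Granting these comparisons, the statements follow. For (1) apply Theorem~\ref{Conc2ndOr}(1) to $g/2$: from $\lVert\nabla_G f\rVert_{\mathrm{HS},2}\le\sqrt{8/(n-2)}$ one gets $\lVert\nabla_W(g/2)\rVert_{\mathrm{HS},2}=\tfrac{\sqrt2}{2}\lVert\nabla_G f\rVert_{\mathrm{HS},2}\le 2/\sqrt{n-2}$, and from $\lVert f''_G\rVert_{\mathrm{op},\infty}\le1$ one gets $\lVert(g/2)''_W\rVert_{\mathrm{op},\infty}\le1$ (up to the remainder $R$), so $\int\exp(\tfrac{n-2}{32e}\cdot\tfrac12|g|)\,d\mu_{n,d}\le2$, which is the claim with exponent $\tfrac{n-2}{64e}$. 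Statement (2) is scale invariant, so Theorem~\ref{Conc2ndOr}(2) applies directly to $g$; the factor $2$ between $\lVert\nabla_W g\rVert_{\mathrm{HS},2}^2$ and $\lVert\nabla_G f\rVert_{\mathrm{HS},2}^2$ and between the two Hessian operator norms turns $C=16e^2/\log2$ into $C=32e^2/\log2$. For (3) one first uses the Grassmann Poincar\'e-type inequality for vector fields---the analogue, for $G_{n,d}$, of the estimate behind Theorem~\ref{Conc2ndOr}(3), obtained along the same reduction via $\pi_{\mathrm{St}}$---to bound $\lVert\nabla_G f\rVert_{\mathrm{HS},2}$ by $\sqrt{16/(n-2-16d)}\,\lVert f''_G\rVert_{\mathrm{HS},2}$ whenever $\nu_{n,d}(\nabla_G f)=0$, and then feeds this into (1) and (2); the doubling of the Stiefel constants $8$ and $8d$ to $16$ and $16d$ reflects precisely the two $\sqrt2$-factors picked up above.
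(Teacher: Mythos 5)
Your reduction via $g := f \circ \pist$ has a genuine gap exactly where you flag it, and it is not a mere bookkeeping issue: the Hessian comparison is asserted, not proved, and under the definitions used in this paper it is the whole difficulty. The intrinsic second derivatives here are chord--Taylor objects (Propositions \ref{2ndOrDerSt} and \ref{2ndOrDerGr}), not Riemannian Hessians, so the appeal to O'Neill/Riemannian-submersion facts about geodesics does not apply as stated (even your claim that $R\equiv 0$ for $d=1$ is unsubstantiated in this framework). Concretely, taking an ambient extension $\tilde f$ of $f$, one has $g''(A)[H,K]=f''(P_A)[D\pist(A)H,D\pist(A)K]+\langle \nabla \tilde f(P_A), HK^T+KH^T\rangle$, and after the projections of Proposition \ref{2ndOrDerSt} the gradient-dependent corrections on the Stiefel side (the term $(A \cs \nabla g(A))\otimes I_n$), on the Grassmann side (the commutator term in Proposition \ref{2ndOrDerGr}), and the $D^2\pist$ term would all have to cancel for your decomposition to hold with a harmless $R$; nothing in your sketch establishes this. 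Worse, even granting a bound $\lVert R(A)\rVert_\mathrm{op}\lesssim \lVert \nabla_G f(P_A)\rVert_\mathrm{HS}$, you cannot invoke Theorem \ref{Conc2ndOr} as a black box: Theorem \ref{Conc2ndOrGr} assumes only an $L^2$ bound on $\nabla_G f$ and an $L^\infty$ bound on $\lVert f''_G\rVert_\mathrm{op}$, whereas Theorem \ref{Conc2ndOr}(1) needs $\lVert g''_W\rVert_{\mathrm{op},\infty}$, i.e.\ an $L^\infty$ bound on the gradient term you propose to ``absorb''; absorbing it would mean reopening the proof of Theorem \ref{Conc2ndOr}, at which point the reduction buys nothing. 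Part (3) has the same problem: the componentwise Poincar\'e step requires Proposition \ref{Prop9.1.3Gr}, not a transfer across $\pist$.

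For contrast, the paper never compares Hessians across $\pist$. The pushforward is used exactly once, at the level of the log-Sobolev inequality: Lemma \ref{pistLip} ($\pist$ is $\sqrt{2}$-Lipschitz) converts the Stiefel LSI into the Grassmann LSI with constant $8/(n-2)$ (Proposition \ref{LSU}(2)). From there the argument runs intrinsically on $G_{n,d}$: the Aida--Stroock moment inequality analogous to \eqref{LpNorms} with the doubled constant, Proposition \ref{Prop9.2.1Gr} in place of Proposition \ref{Prop9.2.1St} to bound the second order modulus of gradient by $\lVert f''_G\rVert_\mathrm{op}$, and the Grassmann analogue of Lemma \ref{1to2} (built on Proposition \ref{Prop9.1.3Gr}) for part (3); the constants $64e$, $32e^2/\log 2$, $16$ and $16d$ come solely from the doubled LSI constant. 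Your first-derivative computation $\lVert \nabla_W (f\circ\pist)\rVert_\mathrm{HS}=\sqrt{2}\,\lVert \nabla_G f\circ \pist\rVert_\mathrm{HS}$ is correct and explains morally why the constants double, but to make your route rigorous you would still have to prove the Hessian identity your proposal explicitly leaves open.
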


In general, once we have a function $f$ which is defined and smooth in a neighborhood of $W_{n,d}$ (which may be achieved by choosing a suitable extension of $f$), we may also formulate concentration of measure results involving the usual (Euclidean) derivatives only. The same holds for functions defined in a neighborhood of $G_{n,d}$ in $\Rns$ or $\mathbb{R}^{n \times n}$.

As shown in Sections \ref{sec:DerSt} and \ref{sec:DerGr}, we always have $|\nabla_W f|, |\nabla_G f| \le |\nabla f|$. In particular, results depending on Euclidean derivatives are somewhat less accurate than bounds involving intrinsic derivatives. On the other hand, the underlying calculus is typically much less involved, especially as the order of the derivatives increases, and in many situations, the usual derivatives may already be sufficient for meaningful concentration bounds. Here, we have the following result, which complements \cite{BGS19}.

\begin{satz}\label{ConckthOr}
Let $f$ be a real-valued $\mathcal{C}^k$-smooth function defined in some neighborhood of $W_{n,d}$ such that $\mu_{n,d}(f) = 0$.
\begin{enumerate}
    \item Assuming $\lVert f^{(\ell)} \rVert_{\mathrm{op},2} \le (4/(n-2))^{(k-\ell)/2}$ for any $\ell = 1, \ldots, k-1$ and $\lVert f^{(k)} \rVert_{\mathrm{op},\infty}\linebreak[2] \le 1$, we have
    \[
    \int_{W_{n,d}} \exp\Big(\frac{n-2}{32e} |f|^{2/k}\Big) d\mu_{n,d} \le 2.
    \]
    \item For $C = 4e^2/\log(2)$ and any $t \ge 0$,
    \[
    \mu_{n,d}(|f| \ge t) \le 2 \exp\Big(-\frac{n-2}{Ck^2}\min\Big(\min_{\ell = 1, \ldots, k-1}\frac{t^{2/\ell}}{\lVert f^{(\ell)} \rVert_{\mathrm{op},2}^{2/\ell}}, \frac{t^{2/k}}{\lVert f^{(k)} \rVert_{\mathrm{op},\infty}^{2/k}}\Big)\Big).
    \]
\end{enumerate}
\end{satz}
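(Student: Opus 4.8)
The plan is to reduce both parts to the single moment estimate
\[
\lVert f\rVert_p\;\le\;\sum_{\ell=1}^{k-1}\Big(\frac{4p}{n-2}\Big)^{\ell/2}\lVert f^{(\ell)}\rVert_{\mathrm{op},2}+\Big(\frac{4p}{n-2}\Big)^{k/2}\lVert f^{(k)}\rVert_{\mathrm{op},\infty}\qquad(p\ge1),
\]
where the constant $4$ stems from the logarithmic Sobolev and Poincaré inequalities for $\mu_{n,d}$ of Section \ref{sec:Sob}. Under the normalisation in (1) each summand of the first sum is at most $(4/(n-2))^{k/2}p^{\ell/2}$ and the last term at most $(4/(n-2))^{k/2}p^{k/2}$, so $\lVert f\rVert_p\le k\,(4p/(n-2))^{k/2}$ for all $p\ge1$, whence (1) follows by expanding $\exp(\lambda|f|^{2/k})=\sum_m\lambda^m|f|^{2m/k}/m!$, integrating term by term and using $m^m\le e^m m!$ to get a geometric series summing to $\le2$ at $\lambda=(n-2)/(32e)$. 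Part (2) follows by applying Markov's inequality $\mu_{n,d}(|f|\ge t)\le t^{-p}\lVert f\rVert_p^p$ to the un-normalised estimate and optimising over $p\ge1$, taking $p$ to be the minimum over $\ell\in\{1,\dots,k-1\}$ of $(t/(ek\lVert f^{(\ell)}\rVert_{\mathrm{op},2}))^{2/\ell}$ and of $(t/(ek\lVert f^{(k)}\rVert_{\mathrm{op},\infty}))^{2/k}$; the factor $k^2$ in the exponent arises from the $k$-term sum combined with $(ek)^{2/\ell}\le(ek)^2$ for $\ell\ge1$.

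The moment estimate itself I would prove by induction on the order of differentiation. Put $g_\ell:=\lVert f^{(\ell)}\rVert_{\mathrm{op}}\ge0$ for $0\le\ell\le k$, so that $g_0=|f|$ and $g_1=|\nabla f|$, and note that $g_\ell$ is locally Lipschitz for $\ell\le k-1$ since $f^{(\ell+1)}$ is continuous. The driving inequality is the $L^p$-consequence (of Aida--Stroock type) of the log-Sobolev inequality for $\mu_{n,d}$: for locally Lipschitz $h$ and $p\ge1$,
\[
\lVert h-\mu_{n,d}(h)\rVert_p\;\le\;\Big(\frac{4p}{n-2}\Big)^{1/2}\lVert\nabla_W h\rVert_{\mathrm{HS},p},
\]
obtained from the Aida--Stroock inequality with the variance term absorbed via the Poincaré inequality and $\lVert\nabla_W h\rVert_{\mathrm{HS},2}\le\lVert\nabla_W h\rVert_{\mathrm{HS},p}$. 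Applied to $h=f$ (which has $\mu_{n,d}(f)=0$) and combined with $\lVert\nabla_W f\rVert_{\mathrm{HS}}=|\nabla_W f|\le|\nabla f|=g_1$ it gives $\lVert f\rVert_p\le(4p/(n-2))^{1/2}\lVert g_1\rVert_p$; applied to $h=g_\ell$ for $1\le\ell\le k-1$, using $g_\ell\ge0$ so that $\mu_{n,d}(g_\ell)\le\lVert g_\ell\rVert_2=\lVert f^{(\ell)}\rVert_{\mathrm{op},2}$, it gives
\[
\lVert g_\ell\rVert_p\;\le\;\lVert f^{(\ell)}\rVert_{\mathrm{op},2}+\Big(\frac{4p}{n-2}\Big)^{1/2}\lVert\nabla_W g_\ell\rVert_{\mathrm{HS},p}.
\]

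These two recursions are linked by the almost-everywhere estimate $\lVert\nabla_W g_\ell\rVert_{\mathrm{HS}}\le g_{\ell+1}$. Indeed, $\lVert\nabla_W g_\ell\rVert_{\mathrm{HS}}\le|\nabla g_\ell|$ for the Euclidean gradient (Section \ref{sec:DerSt}, applied to the extension of $g_\ell$ given by the same formula), and writing $g_\ell(x)=\sup\{f^{(\ell)}(x)[U^{(1)},\dots,U^{(\ell)}]:\lVert U^{(j)}\rVert_{\mathrm{HS}}=1\ \forall j\}$ as a supremum of maps smooth in $x$ with $x$-gradient $f^{(\ell+1)}(x)[\,\cdot\,,U^{(1)},\dots,U^{(\ell)}]$ of Hilbert--Schmidt norm at most $\lVert f^{(\ell+1)}(x)\rVert_{\mathrm{op}}$, Danskin's theorem gives $|\nabla g_\ell(x)|\le\lVert f^{(\ell+1)}(x)\rVert_{\mathrm{op}}=g_{\ell+1}(x)$ wherever $g_\ell$ is differentiable. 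Hence $\lVert\nabla_W g_\ell\rVert_{\mathrm{HS},p}\le\lVert g_{\ell+1}\rVert_p$, and inserting this into the two displays above together with the crude bound $\lVert g_k\rVert_p\le\lVert f^{(k)}\rVert_{\mathrm{op},\infty}$ makes the recursion telescope and yields exactly the moment estimate of the first paragraph.

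The step I expect to be the main obstacle is making this argument rigorous in the absence of smoothness: the operator norms $g_\ell=\lVert f^{(\ell)}\rVert_{\mathrm{op}}$ are in general only Lipschitz, so one must check that the Aida--Stroock/log-Sobolev $L^p$-inequality still applies to them — it does, since the log-Sobolev inequality extends from smooth to locally Lipschitz functions with $|\nabla_W\cdot|$ understood as the almost-everywhere gradient — and that the Danskin differentiation of the supremum, hence the bound $|\nabla g_\ell|\le g_{\ell+1}$, holds off a null set (alternatively via the Clarke subdifferential, or by mollifying the supremum). The remainder is bookkeeping of constants: the variance/Poincaré term and the discrepancy between $p$ and $p-1$ in the Aida--Stroock inequality account for the explicit constants $32e$ in (1) and $4e^2/\log2$ in (2).
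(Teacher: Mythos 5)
Your proposal is correct and follows essentially the same route as the paper: iterate the Aida--Stroock $L^p$ consequence of the log-Sobolev inequality for $\mu_{n,d}$ along the chain $\lVert f^{(\ell)}\rVert_{\mathrm{op}}$ using $|\nabla\lVert f^{(\ell)}\rVert_{\mathrm{op}}|\le\lVert f^{(\ell+1)}\rVert_{\mathrm{op}}$ and the Poincar\'e inequality for the centered term, then pass from the resulting moment bound to the exponential integrability in (1) and, via Markov's inequality optimized in $p$, to the tail bound in (2). The only difference is that you re-derive the ingredients (the Danskin-type gradient bound, the series argument, and the moment-to-tail conversion) which the paper simply cites from \cite{BGS19} and \cite{GSS21b}.
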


The Grassmann version of this theorem reads as follows:

\begin{satz}\label{ConckthOrGr}
Let $f$ be a real-valued $\mathcal{C}^k$-smooth function defined in some neighborhood of $G_{n,d}$ such that $\nu_{n,d}(f) = 0$.
\begin{enumerate}
    \item Assuming $\lVert f^{(\ell)} \rVert_{\mathrm{op},2} \le (8/(n-2))^{(k-\ell)/2}$ for any $\ell = 1, \ldots, k-1$ and $\lVert f^{(k)} \rVert_{\mathrm{op},\infty}\linebreak[2] \le 1$, we have
    \[
    \int_{G_{n,d}} \exp\Big(\frac{n-2}{64e} |f|^{2/k}\Big) d\nu_{n,d} \le 2.
    \]
    \item For $C = 8e^2/\log(2)$ and any $t \ge 0$,
    \[
    \nu_{n,d}(|f| \ge t) \le 2 \exp\Big(-\frac{n-2}{Ck^2}\min\Big(\min_{\ell = 1, \ldots, k-1}\frac{t^{2/\ell}}{\lVert f^{(\ell)} \rVert_{\mathrm{op},2}^{2/\ell}}, \frac{t^{2/k}}{\lVert f^{(k)} \rVert_{\mathrm{op},\infty}^{2/k}}\Big)\Big).
    \]
\end{enumerate}
\end{satz}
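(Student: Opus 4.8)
The plan is to run the proof of Theorem~\ref{ConckthOr} essentially verbatim, with every Stiefel object replaced by its Grassmann counterpart; the whole point is that this substitution only changes constants, in a completely transparent way. Two facts established earlier do the work. First (Section~\ref{sec:Sob}): because $\nu_{n,d} = (\pist)_*\mu_{n,d}$ and $\pist\colon A\mapsto AA^T$ is $\sqrt2$-Lipschitz as a map $(W_{n,d},\lVert\cdot\rVert_{\mathrm{HS}})\to(\Rns,\lVert\cdot\rVert_{\mathrm{HS}})$ (Lemma~\ref{pistLip}), the logarithmic Sobolev inequality for $\mu_{n,d}$ pushes forward to one for $\nu_{n,d}$ whose constant is larger by exactly the factor $(\sqrt2)^2=2$. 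Second (Section~\ref{sec:DerGr}): for $f$ that is $\mathcal{C}^k$ in a neighbourhood of $G_{n,d}$, the intrinsic Grassmann derivative tensors are dominated in operator norm by the Euclidean ones, $\lVert\nabla_G^{(\ell)}f\rVert_{\mathrm{op}}\le\lVert f^{(\ell)}\rVert_{\mathrm{op}}$ for all $\ell$, and $\nabla_G$ obeys the Leibniz and chain rules that the iterative argument requires.

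Granting these, I would first prove the statement with $\nabla_G^{(\ell)}f$ in place of $f^{(\ell)}$. This is precisely the argument underlying Theorem~\ref{ConckthOr} --- the iterated application of the log-Sobolev inequality in the style of~\cite{BGS19}, peeling off one derivative at a time --- but now with the Grassmann log-Sobolev constant, which is twice as large. Since that constant enters the argument only as one overall multiplicative factor, its doubling has the single effect of replacing every occurrence of $n-2$ in Theorem~\ref{ConckthOr} by $(n-2)/2$: the admissible bound $(4/(n-2))^{(k-\ell)/2}$ on the intermediate derivatives becomes $(8/(n-2))^{(k-\ell)/2}$, the integrability rate $\tfrac{n-2}{32e}$ becomes $\tfrac{n-2}{64e}$, and $C=4e^2/\log2$ becomes $C=8e^2/\log2$ --- exactly the constants in the statement. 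Finally I would invoke $\lVert\nabla_G^{(\ell)}f\rVert_{\mathrm{op},p}\le\lVert f^{(\ell)}\rVert_{\mathrm{op},p}$: the Euclidean hypotheses on $f^{(1)},\dots,f^{(k)}$ imply the corresponding intrinsic ones, so the intrinsic bound yields the claimed Euclidean bound, and one passes from $f$ to $f-\nu_{n,d}(f)$ for centering exactly as in the Stiefel case. (Trying instead to transport the \emph{function} $g=f\circ\pist$ and apply Theorem~\ref{ConckthOr} directly does not work cleanly: the chain rule introduces $k$-dependent combinatorial constants and a factor $2^\ell$ from $\pist'$, so one does not recover the clean factor $2$; transferring the inequality rather than the function is the right level.)

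The main obstacle is entirely contained in the two transferred ingredients, not in the concentration step. On the analytic side one must be certain that the scheme of~\cite{BGS19} is genuinely measure-agnostic --- that it uses nothing about $(W_{n,d},\mu_{n,d})$ beyond the log-Sobolev constant and the differential calculus of its gradient --- so that the ``halve $n-2$'' substitution is legitimate. On the geometric side, the operator-norm comparison $\lVert\nabla_G^{(\ell)}f\rVert_{\mathrm{op}}\le\lVert f^{(\ell)}\rVert_{\mathrm{op}}$ at \emph{every} order $\ell$, not just $\ell=1$, is the delicate point: iterated intrinsic differentiation on $G_{n,d}\subset\Rns$ a priori generates extra terms involving the second fundamental form of the embedding, and one has to verify these do not inflate the operator norm, i.e.\ that each intrinsic tensor is, in each of its arguments, a contraction of the Euclidean tensor against tangential projections. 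This is the real content of Section~\ref{sec:DerGr}; once it is in place, Theorem~\ref{ConckthOrGr} follows by direct transcription of the proof of Theorem~\ref{ConckthOr}.
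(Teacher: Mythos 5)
Your overall architecture is partly right and matches the paper at the level of constants: the only geometric input is the logarithmic Sobolev inequality for $\nu_{n,d}$ with constant $8/(n-2)$ (obtained by pushing the Stiefel inequality through the $\sqrt{2}$-Lipschitz map $\pist$, Lemma \ref{pistLip} and Proposition \ref{LSU}), and the paper indeed just records the resulting $L^p$-norm inequality $\lVert g\rVert_p^2\le\lVert g\rVert_2^2+8(n-2)^{-1}(p-2)\lVert\nabla_G g\rVert_p^2$ and reruns the proof of Theorem \ref{ConckthOr}, which produces exactly the doubled constants you predict. However, the specific route you propose --- first prove an intrinsic statement involving Grassmann derivative tensors $\nabla_G^{(\ell)}f$ of every order and then invoke $\lVert\nabla_G^{(\ell)}f\rVert_{\mathrm{op}}\le\lVert f^{(\ell)}\rVert_{\mathrm{op}}$ --- has a genuine gap. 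Section \ref{sec:DerGr} gives this comparison only at order one (Proposition \ref{projGr}); at order two the contraction property bounds $f_G''(P)$ against $f''(P)$ \emph{minus a gradient-dependent commutator correction}, not against $f''(P)$ itself, and the inequality you need is simply false: for a linear function $f(X)=\langle M,X\rangle$ one has $f''\equiv 0$ while $f_G''(P)V=-[P,[M,\pi_PV]]$ is in general nonzero (Proposition \ref{2ndOrDerGr}). So the Euclidean hypotheses do not imply your intrinsic hypotheses, and the transfer step breaks already at $k=2$. For $\ell\ge 3$ the situation is worse: intrinsic derivatives of higher order are never defined in the paper, and the recursion you describe would additionally need an intrinsic analogue of $|\nabla\lVert f^{(\ell)}\rVert_{\mathrm{op}}|\le\lVert f^{(\ell+1)}\rVert_{\mathrm{op}}$ at every order, whose order-one case (Proposition \ref{Prop9.2.1Gr}) already requires substantial work.

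The correct (and the paper's) argument needs no higher-order intrinsic calculus at all, which is also how Theorem \ref{ConckthOr} is actually proved: the recursion runs entirely with Euclidean derivatives of the extension of $f$. One applies the Grassmann $L^p$-inequality to $g=\lVert f^{(\ell)}\rVert_{\mathrm{op}}$, bounds the intrinsic gradient of this scalar function by its Euclidean gradient using only the first-order projection bound $|\nabla_G g|\le|\nabla g|$, and then uses the Euclidean fact $|\nabla\lVert f^{(\ell)}\rVert_{\mathrm{op}}|\le\lVert f^{(\ell+1)}\rVert_{\mathrm{op}}$ from \cite{BGS19}. With $8(p-1)/(n-2)$ in place of $4(p-1)/(n-2)$ the recursion closes, the moment growth gives the exponential integrability with rate $(n-2)/(64e)$, and \cite{GSS21b} gives the tail bound with $C=8e^2/\log 2$. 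Your ``halve $n-2$'' bookkeeping and your remark that transferring the inequality rather than the function $f\circ\pist$ is the right move are both correct; the flaw is only the detour through intrinsic tensors of all orders and the unproved (indeed false) operator-norm comparison on which it relies.
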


Examples where these results are applied to concrete situations and are briefly compared to related bounds known from the literature are given in Section \ref{Sec:PolCh}.

\subsection{Overview}
In view of the parametrizations of the manifolds via matrices, we first provide a brief review on some basic facts about derivatives in matrix arguments and related topics in Section \ref{sec:Nabf}. First and second order intrinsic derivatives for functions on Stiefel manifolds are introduced and discussed in Sections \ref{sec:DerSt} and \ref{sec:2OrMoG}, including a number of inequalities relating first and second order. These results are adapted to Grassmann manifolds in Section \ref{sec:DerGr}. The proofs of our main results are given in Section \ref{sec:Sob}. Applications to polynomial chaos (in particular of order $2$), including concentration of the distance to a given subspace, are provided in Section~\ref{Sec:PolCh}.

\section{Basic facts about matrix calculus}\label{sec:Nabf}

The aim of this section is to fix some notation and to collect some basic facts about matrix calculus mostly for the sake of reference. Most of the results are elementary and will be stated without proofs.

If $A = (A_{ij})$ is an $n \times m$ matrix (i.\,e., $A \in \mathbb{R}^{n \times m}$), we may vectorize it by setting
\[
\mathrm{vec}(A)  := (A_{11}, \ldots, A_{n1}, A_{12}, \ldots, A_{n2}, \ldots, A_{1m},\ldots, A_{nm})^T,
\]
i.\,e.\ $\mathrm{vec}(A)$ is the vector in $\mathbb{R}^{nm}$ with $m$ $n$-blocks corresponding to the columns of $A$. Occasionally, we will also need the inverse operation of $\mathrm{vec}$, which we denote by $\mathrm{mat}$. For $B \in \mathbb{R}^{nd \times nd}$ and $U, V \in \mathbb{R}^{n \times d}$, we introduce the short-hand notation
\begin{equation}\label{shn}
    BV := B\mathrm{vec}(V),\qquad \langle BV, U\rangle := \langle B\mathrm{vec}(V), \mathrm{vec}(U) \rangle.
\end{equation}
There is an $nm \times nm$ permutation matrix $K_{n,m}$ such that for any $A \in \mathbb{R}^{n \times m}$,
\begin{equation}\label{CommMat}
\mathrm{vec}(A^T) = K_{n,m}\mathrm{vec}(A).
\end{equation}
$K_{n,m}$ is uniquely defined by \eqref{CommMat} and is called the $(n,m)$ commutation matrix. Note that $K_{n,m}^T = K_{m,n}$.

Vectorization is closely related to the Kronecker product $A \otimes B$ for any $A \in \mathbb{R}^{n \times m}$ and any $B \in \mathbb{R}^{p \times q}$. The following lemma lists some of its elementary properties.

\begin{lemma}\label{PropKronProd}
\begin{enumerate}
    \item The Kronecker product is bilinear and associative.
    \item We have $(A \otimes B)^T = A^T \otimes B^T$.
    \item $A \otimes B$ is invertible whenever $A$ and $B$ are invertible, and in this case, $(A \otimes B)^{-1} = A^{-1} \otimes B^{-1}$.
    \item For any $A \in \mathbb{R}^{n \times m}, B \in \mathbb{R}^{p \times q}, C \in \mathbb{R}^{m \times r}, D \in \mathbb{R}^{q \times s}$, we have $(A \otimes B)(C \otimes D) = (AC) \otimes (BD)$.
    \item For any $A \in \mathbb{R}^{n \times m}, B \in \mathbb{R}^{m \times p}$, we have $\mathrm{vec}(AB) = (I_p \otimes A) \mathrm{vec}(B) = (B^T \otimes I_n) \mathrm{vec}(A)$.
    \item For any $A \in \mathbb{R}^{n \times m}, B \in \mathbb{R}^{m \times p}, C \in \mathbb{R}^{p \times q}$, we have $\mathrm{vec}(ABC) = (C^T \otimes A)\mathrm{vec}(B) = (I_q \otimes AB)\mathrm{vec}(C) = (C^TB^T \otimes I_n) \mathrm{vec}(A)$.
    \item For any $A \in \mathbb{R}^{n \times m}$ and any $B \in \mathbb{R}^{p \times q}$, we have $K_{p,n} (A \otimes B) K_{m,q} = B \otimes A$.
\end{enumerate}
\end{lemma}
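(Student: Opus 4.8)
The plan is to work throughout from the block description of the Kronecker product: for $A = (A_{ij}) \in \mathbb{R}^{n\times m}$ and $B \in \mathbb{R}^{p\times q}$, the matrix $A\otimes B \in \mathbb{R}^{np\times mq}$ is the $n\times m$ array of blocks whose $(i,j)$ block equals $A_{ij}B$. From this, items (1)--(3) are essentially immediate. Bilinearity holds because every entry of $A\otimes B$ is a product $A_{ij}B_{k\ell}$, which is linear in $A$ for fixed $B$ and vice versa; associativity follows by checking that both $(A\otimes B)\otimes C$ and $A\otimes(B\otimes C)$ have entries indexed by triples with value $A_{ij}B_{k\ell}C_{rs}$, the only point being that the two natural orderings of the index triples coincide. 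For (2), transposing the block matrix $A\otimes B$ turns its $(i,j)$ block into the transposed $(j,i)$ block, i.e.\ into $A_{ij}B^T = (A^T)_{ji}B^T$, which is exactly the block description of $A^T\otimes B^T$.

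Next I would establish the mixed-product rule (4), since (3) and (6)--(7) all reduce to it. Multiplying $A\otimes B$ and $C\otimes D$ blockwise, the $(i,j)$ block of the product is $\sum_k (A_{ik}B)(C_{kj}D) = \big(\sum_k A_{ik}C_{kj}\big)BD = (AC)_{ij}(BD)$, which is the $(i,j)$ block of $(AC)\otimes(BD)$. Then (3) is immediate: $(A\otimes B)(A^{-1}\otimes B^{-1}) = (AA^{-1})\otimes(BB^{-1}) = I_n\otimes I_p = I_{np}$, and likewise on the other side.

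For the $\mathrm{vec}$ identities in (5) I would argue column by column. Column $j$ of $AB$ equals $A$ times the $j$-th column of $B$, which is the $j$-th block of $\mathrm{vec}(B)$; stacking the columns gives $\mathrm{vec}(AB) = (I_p\otimes A)\mathrm{vec}(B)$. Column $j$ of $AB$ also equals $\sum_k B_{kj}a_k$ with $a_k$ the $k$-th column of $A$, which is the $j$-th row-block of $(B^T\otimes I_n)$ applied to $\mathrm{vec}(A)$; stacking gives $\mathrm{vec}(AB) = (B^T\otimes I_n)\mathrm{vec}(A)$. Combining these two with (4) yields the auxiliary identity $\mathrm{vec}(AXB) = (B^T\otimes A)\mathrm{vec}(X)$, since $\mathrm{vec}(AXB) = (B^T\otimes I_n)\mathrm{vec}(AX) = (B^T\otimes I_n)(I_p\otimes A)\mathrm{vec}(X) = (B^T\otimes A)\mathrm{vec}(X)$; specializing this with the middle matrix of $ABC$ in the role of $X$ gives the first expression in (6), while the other two come from grouping $ABC$ as $(AB)C$ and $A(BC)$ and applying (5) directly. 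Finally, (7) follows by testing both sides on $\mathrm{vec}(X)$ for arbitrary $X\in\mathbb{R}^{m\times q}$: by \eqref{CommMat} and the fact that each commutation matrix is a permutation with $K_{n,m}^T = K_{m,n} = K_{n,m}^{-1}$, one has $K_{m,q}\mathrm{vec}(X) = \mathrm{vec}(X^T)$, then $(A\otimes B)\mathrm{vec}(X^T) = \mathrm{vec}(BX^TA^T) = \mathrm{vec}((AXB^T)^T)$ by the auxiliary identity, and then $K_{p,n}\mathrm{vec}((AXB^T)^T) = \mathrm{vec}(AXB^T) = (B\otimes A)\mathrm{vec}(X)$.

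There is no real difficulty here: everything is a direct consequence of the block definition, so the only thing to watch is the index bookkeeping — the ordering conventions in $\mathrm{vec}$ and in the blocks of $A\otimes B$, and keeping the matrix dimensions consistent when chaining the identities in (5)--(7). As these facts are entirely standard, in the writeup one would (as the authors do) simply cite a reference on matrix calculus or state them without proof; the scheme above is what one checks if a proof is demanded.
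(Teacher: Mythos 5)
Your proposal is correct: the blockwise description of $A\otimes B$, the mixed-product rule (4) as the pivot, the column-by-column verification of (5), the auxiliary identity $\mathrm{vec}(AXB)=(B^T\otimes A)\mathrm{vec}(X)$ for (6), and the test of (7) on $\mathrm{vec}(X)$ via \eqref{CommMat} all check out, including the dimension bookkeeping. There is nothing in the paper to compare against, since the authors state Lemma \ref{PropKronProd} explicitly without proof as standard matrix-calculus facts; your sketch is precisely the standard verification one would supply, and your closing remark correctly identifies this.
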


If $f \colon \mathbb{R}^{n\times m} \to \mathbb{R}^{p\times q}$ is any differentiable (possibly matrix-valued) function of $X \in \mathbb{R}^{n \times m}$, we define
\[
Df(X) := D\mathrm{vec}(f(X)) := \frac{d f(X)}{dX} := \frac{d \mathrm{vec}(f(X))}{d \mathrm{vec}(X)},
\]
which is a $pq \times mn$ matrix. Let us recall the usual differentiation rules for derivatives in matrix arguments together with the derivatives of several standard types of functions.

\begin{lemma}\label{MatrixDeriv}
Let $X \in \mathbb{R}^{n\times m}$, $A \in \mathbb{R}^{p \times n}$ and $B \in \mathbb{R}^{m \times q}$.
\begin{enumerate}
    \item Let $f(X) \in \mathbb{R}^{p\times q}$, $g(X) \in \mathbb{R}^{q\times k}$ be differentiable functions of $X$. Then, the \emph{product rule} holds:
    \[
    D(f(X)g(X)) = (g(X)^T \otimes I_p)Df(X) + (I_k \otimes f(X)) Dg(X).
    \]
    \item For differentiable functions $f$ on $\mathbb{R}^{q \times k}$ and $g(X) \in \mathbb{R}^{q\times k}$, the usual \emph{chain rule} holds:
    \[
    D[f(g(X))] = Df(g(X)) \cdot Dg(X).
    \]
    \item We have $\frac{dX^T}{dX} = K_{n,m}$.
    \item We have $\frac{d(AXB)}{dX} = B^T \otimes A$.
    \item We have $\frac{d\mathrm{tr}(AX)}{dX} = \mathrm{vec}(A^T)^T$.
\end{enumerate}
\end{lemma}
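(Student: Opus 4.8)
The plan is to reduce all five identities to the linearity of an explicitly vectorized map, combined with Lemma \ref{PropKronProd} and the defining relation \eqref{CommMat}; none of them is deep. First I would handle the three closed-form derivatives. For (3), relation \eqref{CommMat} already exhibits $\mathrm{vec}(X^T) = K_{n,m}\mathrm{vec}(X)$ as a \emph{linear} function of $\mathrm{vec}(X)$, so its Jacobian is $K_{n,m}$. For (4), Lemma \ref{PropKronProd}(6) applied to the triple $(A,X,B)$ gives $\mathrm{vec}(AXB) = (B^T \otimes A)\mathrm{vec}(X)$, again linear in $\mathrm{vec}(X)$, with Jacobian $B^T \otimes A$. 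For (5), I would expand $\mathrm{tr}(AX) = \sum_{i,j} A_{ij} X_{ji} = \mathrm{vec}(A^T)^T \mathrm{vec}(X)$ and read off the derivative as the row vector $\mathrm{vec}(A^T)^T$.

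For the product rule (1), the plan is a first-order expansion. Writing $h := \mathrm{vec}(H)$ for an increment of $X$ and using that, by definition of $D$, one has $\mathrm{vec}(f(X+H)) = \mathrm{vec}(f(X)) + Df(X)h + o(|H|)$ and likewise for $g$, I would multiply the two expansions to get
\[
f(X+H)g(X+H) = f(X)g(X) + \Delta_f\, g(X) + f(X)\, \Delta_g + o(|H|),
\]
where $\Delta_f,\Delta_g$ are the matrices determined by $\mathrm{vec}(\Delta_f) = Df(X)h$ and $\mathrm{vec}(\Delta_g) = Dg(X)h$. Vectorizing the two terms that are linear in $H$ by means of Lemma \ref{PropKronProd}(5) turns $\mathrm{vec}(\Delta_f\, g(X))$ into $(g(X)^T \otimes I_p)Df(X)h$ and $\mathrm{vec}(f(X)\, \Delta_g)$ into $(I_k \otimes f(X))Dg(X)h$; since this holds for every $h$, the bracketed sum is the Jacobian of $\mathrm{vec}(f(X)g(X))$, which is the claim. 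Part (2) is then simply the ordinary multivariate chain rule applied to the composition of the vectorized maps $\mathrm{vec}(X) \mapsto \mathrm{vec}(g(X)) \mapsto \mathrm{vec}(f(g(X)))$.

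The only genuine care needed — and the reason these formulas are collected here for reference — is bookkeeping: keeping the column-stacking $\mathrm{vec}$ convention consistent and getting the left/right placement of the Kronecker factors (and of $I_p$ versus $I_k$) correct, which is why applying Lemma \ref{PropKronProd}(5)--(6) in the right form is the one step worth double-checking. Beyond that there is no real obstacle, so I would state this proof very briefly or, as the authors do, omit it entirely.
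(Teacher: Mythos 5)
Your proposal is correct: reducing (3)--(5) to linearity of the vectorized maps and obtaining (1)--(2) from a first-order expansion together with Lemma \ref{PropKronProd}(5) and the ordinary multivariate chain rule is exactly the standard argument, and the Kronecker bookkeeping ($g(X)^T \otimes I_p$ versus $I_k \otimes f(X)$) is handled correctly. The paper deliberately states these identities without proof as elementary matrix-calculus facts, so there is nothing to compare beyond noting that your write-up supplies precisely the omitted routine verification.
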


\section{Derivatives on Stiefel manifolds}\label{sec:DerSt}

To formally introduce intrinsic derivatives of first and second order on Stiefel manifolds, we follow and extend the spherical case as discussed in \cite{BCG17}. If $f$ is any real-valued locally Lipschitz function on some metric space $(M,d)$ (with no isolated points), we may always define the generalized modulus of the gradient of $f$ in $x \in M$
\begin{equation}\label{genmod}
|\nabla^* f(x)| = \limsup_{x' \rightarrow x} \frac{|f(x)-f(x')|}{d(x,x')}.
\end{equation}
In particular, we may use \eqref{genmod} for functions $f(X)$ on the Stiefel manifold $W_{n,d}$ together with the Euclidean (Hilbert--Schmidt) metric $d(X,X') = |X-X'|$. Here we will always write $|\nabla^*f(X)| \equiv |\nabla_Wf(X)|$, cf.\ the discussion after \eqref{defderSt} below. Note we could also use the (point-dependent) canonical instead of the Euclidean metric, which leads to different notions of differentiability, cf.\ e.\,g.\ \cite{EAS98}.

To introduce a notion of differentiability of a function $f$ on $G_{n,d}$ which is consistent with \eqref{genmod}, recall that the tangent space in $A \in W_{n,d}$ is given by
\[
T_A:= \{  N \in \mathbb{R}^{n \times d} : A^T N + N^T A = 0\} = \{  N \in \mathbb{R}^{n \times d} : A \cs N = 0\},
\]
where for any $M, N \in \mathbb{R}^{n \times d}$ (or $M, N \in \mathbb{R}^{d \times d}$ as we shall also need later on), $M \cs N$ denotes the ``symmetric product''
\[
M \cs N := \frac{1}{2} (M^T N + N^T M),
\]
which is a symmetric $d \times d$ matrix (for $d=1$, this reduces to the Euclidean scalar product). Hence, $T_A$ is the set of all the matrices $N \in \mathbb{R}^{n \times d}$ such that $A^TN$ is antisymmetric. Now, a function $f \colon W_{n,d} \to \mathbb{R}$ is differentiable at $A \in W_{n,d}$ if it admits a Taylor expansion
\begin{equation}\label{defderSt}
f(A') = f(A) + \langle M,A' - A\rangle + o\big(|A' - A|\big) \quad {\rm as} \ \ A' \rightarrow A, \ \ A' \in W_{n,d}
\end{equation}
with some $M \in \R^{n \times d}$. The unique $M_0$ of smallest (Euclidean) length among all such $M$ is called the intrinsic derivative or gradient of $f$ at $A$ and is denoted $\nabla_W f(A)$. The length of $\nabla_W f(A)$ agrees with \eqref{genmod}.

Once $M \in \mathbb{R}^{n \times d}$ satisfies \eqref{defderSt}, any matrix $M-B$ satisfies \eqref{defderSt} as well if
\[
\langle B, A'-A \rangle = o(|A'-A|)
\]
as $A' \to A$, $A, A' \in W_{n,d}$. The latter is equivalent to $\langle B, N \rangle = 0$ for all $N \in T_A$, i.\,e.\ $B \in T_A^\perp$. Therefore, the minimization problems translates into
\[
\lVert M - B \rVert \to \mathrm{min}\qquad \text{over all $B \in T_A^\perp$,}
\]
which is solved uniquely for the orthogonal projection $M$ onto $T_A$.

Often, it is convenient to consider functions $f$ which are defined and smooth in an open neighborhood of $W_{n,d}$, which may be achieved by choosing a suitable extension of the function under consideration. In this case, we may take the Euclidean derivative $Df(A) \in \mathbb{R}^{1 \times nd}$ of $f$ in $A \in W_{n,d}$ (cf.\ the previous section) and set $\nabla f(A) := \mathrm{mat}(Df(A)^T) \in \mathbb{R}^{n \times d}$ (the usual gradient rewritten as a matrix). Clearly, $\nabla f(A)$ satisfies \eqref{defderSt}, and we may project it onto $T_A$. This way, we get back $\nabla_W f(A)$.

\begin{proposition}\label{projSt}
Let $f$ be defined and $\mathcal{C}^1$-smooth in a neighborhood of $W_{n,d}$. Then, the intrinsic first derivative of $f$ at $A \in W_{n,d}$ is given by the projection onto $T_A$
\begin{equation}\label{FormelStiefelGradient}
\nabla_W f(A) = \nabla f(A) - A(A\cs \nabla f(A)).
\end{equation}
In particular, $|\nabla_W f(A)| \leq |\nabla f(A)|$ for any $A \in W_{n,d}$.
\end{proposition}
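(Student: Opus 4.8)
The plan is to verify that the right-hand side of \eqref{FormelStiefelGradient} is precisely the orthogonal projection of $\nabla f(A)$ onto the tangent space $T_A$, which by the discussion preceding the proposition is exactly the intrinsic gradient $\nabla_W f(A)$. Since $\nabla f(A)$ itself satisfies the Taylor expansion \eqref{defderSt} (being the Euclidean gradient of a smooth extension), and the intrinsic gradient is characterized as the unique minimizer of Euclidean length among all such $M$, and that minimizer is the projection onto $T_A$, it suffices to show that the map $M \mapsto M - A(A \cs M)$ is the orthogonal projection $\mathbb{R}^{n \times d} \to T_A$.

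First I would check that $A(A \cs M) \in T_A^\perp$ for every $M \in \mathbb{R}^{n \times d}$, equivalently that $M - A(A\cs M) \in T_A$; then I would check that $A(A \cs M)$ annihilates $T_A$, i.e.\ $\langle A(A\cs M), N\rangle = 0$ for all $N \in T_A$ (so that the decomposition $M = (M - A(A\cs M)) + A(A\cs M)$ is the orthogonal one into $T_A \oplus T_A^\perp$). For the first point, using $A^T A = I_d$ one computes $A^T\big(M - A(A\cs M)\big) = A^T M - (A\cs M)$; since $A\cs M = \tfrac12(A^T M + M^T A)$, this equals $\tfrac12(A^T M - M^T A)$, which is antisymmetric, so $M - A(A\cs M) \in T_A$ by the description of $T_A$ as the matrices $N$ with $A^T N$ antisymmetric. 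For the second point, take $N \in T_A$, so $A^T N + N^T A = 0$; then
\[
\langle A(A\cs M), N\rangle = \mathrm{tr}\big((A\cs M) A^T N\big) = \mathrm{tr}\big((A\cs M)(A^T N)\big),
\]
and since $A\cs M$ is symmetric while $A^T N = -(N^T A) = -(A^T N)^T$ is antisymmetric, the trace of their product vanishes. Hence $A(A\cs M) \in T_A^\perp$, and together with the previous step this identifies $M - A(A\cs M)$ as the orthogonal projection of $M$ onto $T_A$. Applying this with $M = \nabla f(A)$ gives \eqref{FormelStiefelGradient}.

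Finally, $|\nabla_W f(A)| \le |\nabla f(A)|$ is immediate: an orthogonal projection does not increase Euclidean (Hilbert--Schmidt) norm. I do not expect any serious obstacle here; the only mild care needed is in matching the abstract characterization of $\nabla_W f$ (unique shortest $M$ in \eqref{defderSt}, which by the paragraph above equals the projection onto $T_A$) with the explicit formula, and in the routine use of $A^T A = I_d$ together with the symmetry/antisymmetry bookkeeping in the trace computations. One should also note in passing that the Euclidean gradient of the extension does satisfy \eqref{defderSt} restricted to $A' \in W_{n,d}$ and that the value of the projection is independent of the chosen extension (any two extensions differ by a function vanishing on $W_{n,d}$, whose Euclidean gradient at $A$ lies in $T_A^\perp$ and is thus killed by the projection), which is what makes $\nabla_W f(A)$ well defined.
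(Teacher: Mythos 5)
Your argument is correct and follows essentially the same route as the paper: identify $\nabla_W f(A)$ as the orthogonal projection of $\nabla f(A)$ onto $T_A$ (via the minimization characterization in \eqref{defderSt}) and then recognize $M \mapsto M - A(A \cs M)$ as that projection. You in fact supply the verification (antisymmetry of $A^T(M - A(A\cs M))$ using $A^TA = I_d$, and the vanishing trace of a symmetric times an antisymmetric matrix) that the paper leaves implicit in \eqref{projfSt}, as well as the well-definedness remark about extensions, so there is nothing to correct.
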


Defining the projection $\pi_A \colon \mathbb{R}^{n \times d} \to T_A$ by
\begin{equation}\label{projfSt}
\pi_AM := M - A(A \cs M),
\end{equation}
Proposition \ref{projSt} states that $\nabla_W f(A) = \pi_A \nabla f(A)$. In particular, by the contractivity of orthogonal projections, this shows that indeed, $|\nabla_W f(A)| \leq |\nabla f(A)|$.

Next, we introduce second order (intrinsic) derivatives (Hessians). For any $C^2$-smooth function $f \colon \mathbb{R}^{n \times d} \to \mathbb{R}$ at a given point $A \in W_{n,d}$, consider the Taylor expansion up to the quadratic term (using the notation \eqref{shn})
\begin{equation}\label{Taylor2ndOrderSt}
f(A') = f(A) + \langle\nabla_W f(A), A'-A\rangle + \frac{1}{2} \langle B(A'-A), A'-A \rangle + o\big(|A' - A|^2\big)
\end{equation}
as $A' \rightarrow A$, $A' \in W_{n,d}$, where $B \in \mathbb{R}^{nd \times nd}$ is some matrix. The collection of all $B$'s satisfying \eqref{Taylor2ndOrderSt} represents an affine subspace of $\mathbb{R}^{nd \times nd}$. Therefore, among all of them, there exists a unique matrix of smallest Hilbert--Schmidt norm. It is called the (intrinsic) second derivative of $f$ at the point $X$ and will be denoted $f_W''(A)$.

If $f$ is $C^2$-smooth function on some open neighborhood of $W_{n,d}$, by the usual (Euclidean) Taylor expansion, \eqref{Taylor2ndOrderSt} holds with
\begin{equation}\label{MatrixBSt}
B = f''(A) - (A \cs \nabla f(A)) \otimes I_n,
\end{equation}
where $f''(A)$ is the $nd \times nd$ matrix of the usual (Euclidean) second order derivatives of $f$ at $A$. Indeed, plugging \eqref{FormelStiefelGradient} and \eqref{MatrixBSt} into \eqref{Taylor2ndOrderSt}, this follows from
\begin{align*}
&\qquad \langle A(A \cs \nabla f(A)), A' - A\rangle + \frac{1}{2} \langle ((A \cs \nabla f(A)) \otimes I_n)(A' - A), A'-A\rangle\\
&= \frac{1}{2} \langle (A'+A)(A \cs \nabla f(A)), A' - A\rangle
= \frac{1}{2} \mathrm{tr} ((A \cs \nabla f(A)) (A'^T + A^T)(A'-A))\\
    &= \frac{1}{2} \mathrm{tr} ((A \cs \nabla f(A)) (A'^TA-A^TA')) = 0,
\end{align*}
where in the first identity we have used that by \eqref{shn} and Lemma \ref{PropKronProd} (5),
\[
((A \cs \nabla f(A)) \otimes I_n)(A'-A) = \mathrm{vec}((A'-A)(A \cs \nabla f(A))),
\]
the third step follows from $A, A' \in W_{n,d}$ and in the last one we use that the trace of a product of a symmetric and an antisymmetric matrix is $0$.

Given $C \in \mathbb{R}^{nd \times nd}$, the matrix $B - C$ satisfies \eqref{Taylor2ndOrderSt} if and only if
\[
\langle C(A' - A), A'-A\rangle = o\big(|A'-A|^2\big)
\]
for $A' \to A$, $A' \in W_{n,d}$. Similarly to the first order case, this is equivalent to $\langle CX, X \rangle = 0$ for all $X \in T_A$. This condition defines a linear subspace $L$ of $\mathbb{R}^{nd \times nd}$, and the minimization problem translates into
\[
\lVert B - C \rVert \to \mathrm{min}\qquad \text{over all $C \in L$,}
\]
which is solved uniquely for the orthogonal projection of $B$ onto the linear space $L^\perp$ of all matrices orthogonal to $L$. Since $B$ is symmetric, we may restrict ourselves to symmetric matrices. As a result, we obtain the following description.

\begin{proposition}\label{2ndOrDerSt}
The intrinsic second derivative of $f$ at each $A \in W_{n,d}$ is the symmetric matrix which is given by the orthogonal projection
\[
f_W''(A) = P_{L^\perp} B, \qquad B = f''(A) - (A \cs \nabla f(A)) \otimes I_n,
\]
to the orthogonal complement of the linear subspace $L = L_A$ of all symmetric matrices $C$ in $\mathbb{R}^{nd \times nd}$ such that $\langle CX, X \rangle = 0$ for all $X \in T_A$. Equivalently,
\[
f_W''(A) = P_A B P_A,
\]
where $P_A= AA^T$ and $P_A B P_A$ is the $nd \times nd$ matrix which is defined by the relation
\begin{equation}\label{NotExpl}
(P_A B P_A)\vec(V) = \vec(P_A \mathrm{mat}[B\mathrm{vec}(P_AV)])
\end{equation}
for any $V \in \mathbb{R}^{n\times d}$.
\end{proposition}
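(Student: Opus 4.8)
The plan is to verify the two equivalent descriptions of $f_W''(A)$ separately, starting from the characterization established just before the proposition: $f_W''(A)$ is the orthogonal projection of the symmetric matrix $B = f''(A) - (A \cs \nabla f(A)) \otimes I_n$ onto $L^\perp$, where $L = L_A$ consists of all symmetric $C \in \mathbb{R}^{nd \times nd}$ with $\langle CX, X\rangle = 0$ for all $X \in T_A$. The first assertion, $f_W''(A) = P_{L^\perp}B$, is then just a restatement, so the real content is showing that $P_{L^\perp}B$ equals the map $P_A B P_A$ defined by \eqref{NotExpl}.

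First I would understand the operator $C \mapsto P_A C P_A$ defined by \eqref{NotExpl} as a linear endomorphism of $\mathbb{R}^{nd\times nd}$, and identify its range and its action. Writing $Q_A \colon \mathbb{R}^{n\times d} \to \mathbb{R}^{n\times d}$ for the Euclidean orthogonal projection $V \mapsto P_A V = AA^T V$ onto the column space of $A$, we have $(P_A B P_A)\vec(V) = \vec(Q_A \,\mathrm{mat}[B\,\vec(Q_A V)])$, i.e.\ in vectorized form $P_A B P_A = \tilde Q_A B \tilde Q_A$ where $\tilde Q_A = I_d \otimes (AA^T)$ is the matrix of $Q_A$ acting on $\vec$'s (using Lemma~\ref{PropKronProd}(5)). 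Since $\tilde Q_A$ is a symmetric idempotent, conjugation by it is the orthogonal projection of $\mathbb{R}^{nd\times nd}$ onto the subspace of matrices of the form $\tilde Q_A M \tilde Q_A$. So the claim $P_{L^\perp}B = P_A B P_A$ amounts to showing that $L^\perp$ is exactly the range of $C \mapsto \tilde Q_A C \tilde Q_A$, and that $B$ and $\tilde Q_A B \tilde Q_A$ have the same projection onto $L^\perp$ — the latter being automatic once we know $L^\perp = \mathrm{range}(\tilde Q_A \cdot \tilde Q_A)$, because then $P_{L^\perp} = (C \mapsto \tilde Q_A C \tilde Q_A)$ as projections onto the same subspace.

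Thus the heart of the proof is the identity of subspaces $L^\perp = \{\tilde Q_A C \tilde Q_A : C \in \mathbb{R}^{nd\times nd}\}$, equivalently (passing to orthogonal complements, restricting to symmetric matrices throughout as the text allows) $L = \{C \text{ symmetric} : \tilde Q_A C \tilde Q_A = 0\}$. One inclusion is easy: if $\tilde Q_A C \tilde Q_A = 0$ then for $X \in T_A$ one needs $\langle CX, X\rangle = 0$, which would follow if $\vec(X) \in \mathrm{range}(\tilde Q_A)$; but that is generally false — $T_A$ is not contained in the column space of $A$. The correct statement must instead be that $L$ is the set of symmetric $C$ annihilating $T_A$ in the quadratic-form sense, and the key linear-algebra fact is the decomposition $\mathbb{R}^{n\times d} = \mathrm{range}(Q_A) \oplus (T_A \cap \ker Q_A) \oplus (\text{remainder})$ together with the observation, to be checked, that the quadratic form $X \mapsto \langle CX,X\rangle$ vanishing on all of $T_A$ is equivalent to $C$ vanishing (as a bilinear form) on $T_A \times T_A$, and then decomposing $T_A$. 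Here I expect to use the explicit description $T_A = \{N : A^T N \text{ antisymmetric}\}$ and the fact that $\mathrm{mat}(\vec(V))$ with $Q_A V = 0$ ranges over matrices whose columns lie in the orthogonal complement of $\mathrm{col}(A)$; a short computation should show $T_A^\perp$ (inside $\mathbb{R}^{n\times d}$) equals $\{A S : S \text{ symmetric } d\times d\}$, so that $T_A = \mathrm{col}(A)^{\perp,\,\mathrm{columnwise}} \oplus \{AS : S \text{ antisymmetric}\}$, and one checks $\tilde Q_A$ kills the first summand while the conjugation structure handles the second.

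The main obstacle I anticipate is precisely this subspace identification: showing carefully that the orthogonal projection onto $L^\perp$ coincides with the two-sided conjugation $C \mapsto \tilde Q_A C \tilde Q_A$, since $L$ is defined by a quadratic (not bilinear) condition on the non-self-adjoint-looking set $T_A$, and one must be careful that $\tilde Q_A$ is the projection onto $\mathrm{col}(A)$ in each column, not onto $T_A$. I would handle this by (i) reducing the quadratic condition $\langle CX,X\rangle=0\ \forall X\in T_A$ to the bilinear condition $\langle CX,Y\rangle = 0\ \forall X,Y \in T_A$ using symmetry of $C$ and polarization, so that $L^\perp$ (within symmetric matrices) is spanned by $\{XY^T + YX^T\}$-type tensors with $X,Y\in T_A$ — no wait, more precisely $L^\perp$ consists of symmetric $C$ supported on $T_A\otimes T_A$; (ii) computing that $\mathrm{range}(\tilde Q_A) = \vec(\{M : \mathrm{col}(M)\subseteq\mathrm{col}(A)\}) \subseteq \vec(T_A)$ fails, which forces the realization that \eqref{NotExpl} cannot be literally $P_{L^\perp}$ unless one reinterprets — so in fact I would instead directly verify the equivalence of the two displayed formulas by showing $P_A B P_A$ is symmetric, lies in $L^\perp$, and that $B - P_A B P_A \in L$, each by a direct computation using \eqref{MatrixBSt}, Lemma~\ref{PropKronProd}, $A^T A = I_d$, and the characterization of $T_A$; uniqueness of the orthogonal projection then finishes it. The verification $B - P_A B P_A \in L$ is where the bulk of the matrix bookkeeping lies and is the step most likely to require care.
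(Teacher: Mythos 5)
Your first reduction is the right mechanism: since $C$ is symmetric and $T_A$ is a linear subspace, polarization shows that $L$ consists exactly of the symmetric $C$ with $\Pi C\Pi=0$, where $\Pi$ denotes the orthogonal projection of $\mathbb{R}^{nd}$ onto $\vec(T_A)$; and because $C\mapsto \Pi C\Pi$ is a self-adjoint idempotent for the Hilbert--Schmidt inner product, its range is $L^\perp$ and $P_{L^\perp}$ \emph{is} this conjugation. You are also right that conjugation by your $\tilde Q_A=I_d\otimes AA^T$ cannot play this role, since its range is $\vec(\{AS: S\in\mathbb{R}^{d\times d}\})$, which is not contained in $\vec(T_A)$. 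But there the argument stalls and never recovers: your fallback plan --- show that $P_ABP_A$ is symmetric, lies in $L^\perp$, and that $B-P_ABP_A\in L$ --- is formulated for an unspecified operator, and under the literal reading $P_A=AA^T$ (acting columnwise) the middle claim is false: $\mathrm{mat}\big[\tilde Q_A B\tilde Q_A\vec(V)\big]$ has the form $AS_V$ with $S_V$ in general not antisymmetric, so the image is not contained in $T_A$, the matrix is not of the form $\Pi C\Pi$, and hence not in $L^\perp$. The intermediate suggestion that $\tilde Q_A$ ``kills the first summand while the conjugation structure handles the second'' does not help either; annihilating the summand $\{N: A^TN=0\}$ of $T_A$ is precisely what disqualifies $\tilde Q_A$.

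The missing idea is the identification of the operator in \eqref{NotExpl}: despite the (misleading) line ``$P_A=AA^T$'', the projection intended there is the tangent-space projection $\pi_A M=M-A(A\cs M)$ of \eqref{projfSt}. This is forced by the assertions immediately following the proposition ($f''_W(A)V\in T_A$ and $f''_W(A)A=0$, both of which fail for columnwise $AA^T$ since $AA^TA=A\neq 0$) and by the way $P_A$ is used in the proof of Proposition \ref{Prop9.1.3St}, where $V-P_AV=A(A\cs V)$. With this reading, your own opening argument finishes the proof at once: the matrix $\Pi$ of $\pi_A$ acting on vectorized matrices is a symmetric idempotent with range $\vec(T_A)$, conjugation by $\Pi$ equals $P_{L^\perp}$ (its kernel on symmetric matrices is $L$ by polarization), and therefore $f''_W(A)=P_{L^\perp}B=\Pi B\Pi$, which is exactly \eqref{NotExpl}; no separate verification that $B-\Pi B\Pi\in L$ is needed. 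This is also all the paper itself does --- it offers no proof beyond the minimization discussion preceding the proposition --- so the substance you were asked to supply is precisely the step your proposal leaves unresolved.
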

 
In particular, for all $A \in W_{n,d}$ and $V \in \mathbb{R}^{n\times d}$, $f_W''(A)V \in T_A$, $f''_W(A)A = 0$, and hence $\langle f_W''(A)V, A \rangle = 0$ for all $V$. Furthermore, we have the contraction property
\[
\|f_W''(A)\|_\mathrm{HS} \, \leq \, \|f''(A) - (A \cs \nabla f(A)) \otimes I_n\|_{{\rm HS}},
\]
which also holds for the operator norm.

In the notation of \eqref{shn}, it is not hard to show that for any $V \in \mathbb{R}^{n \times d}$
\[
f''(X)V = \nabla \langle \nabla f(X), V \rangle
\]
for the usual Euclidean derivatives. The analogue for intrinsic derivatives reads as follows:

\begin{proposition}\label{Prop9.1.3St}
Given a $C^2$-smooth function $f$ on $W_{n,d}$, for all $A \in W_{n,d}$ and $V \in \mathbb{R}^{n \times d}$, we have
\[
f''_W(A)V = \nabla_W\langle \nabla_W f(A), V \rangle + P_A(\nabla_W f(A)(A \cs V)).
\]
Here, the left-hand side has to be read as $f''(W)\mathrm{vec}(V)$ and the right-hand side has to be understood as vectorized.
\end{proposition}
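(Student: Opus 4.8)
The plan is to reduce the statement to an explicit computation of both sides in terms of the Euclidean derivatives $\nabla f(A)$ and $f''(A)$, using the formulas already available: the projection formula \eqref{FormelStiefelGradient} for $\nabla_W f$, the projection description $f_W''(A) = P_A B P_A$ with $B = f''(A) - (A \cs \nabla f(A)) \otimes I_n$ from Proposition \ref{2ndOrDerSt}, and the Euclidean identity $f''(X)V = \nabla \langle \nabla f(X), V\rangle$. First I would fix $V \in \mathbb{R}^{n \times d}$ and compute the Euclidean gradient of the scalar function $A \mapsto \langle \nabla_W f(A), V\rangle = \langle \nabla f(A), V\rangle - \langle A(A \cs \nabla f(A)), V\rangle$. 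The first term differentiates to $f''(A)V$ by the Euclidean identity. The second term is a product of three matrix-valued functions of $A$ (namely $A$, the $d\times d$ matrix $A \cs \nabla f(A)$, and the constant piece), so I would apply the product rule from Lemma \ref{MatrixDeriv}(1) together with the trace/vec rules of Lemma \ref{PropKronProd} to express $\nabla\langle A(A \cs \nabla f(A)), V\rangle$ in closed form; this produces several terms, one of which will be $(A \cs \nabla f(A)) \otimes I_n$ acting on $V$ (matching the $B$ correction), plus terms involving $\nabla f(A)(A \cs V)$ and $A$ times $d \times d$ matrices.

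Next I would take the result of that Euclidean computation and apply the intrinsic projection $\pi_A$ (i.e. \eqref{projfSt}) to obtain $\nabla_W\langle \nabla_W f(A), V\rangle$; here the key simplifications come from the fact that $\pi_A$ kills anything of the form $A S$ with $S$ symmetric $d\times d$, and in particular $\pi_A A = 0$. I then add the correction term $P_A(\nabla_W f(A)(A \cs V))$ appearing on the right-hand side and compare with $f_W''(A)V = P_A \mathrm{mat}[B \mathrm{vec}(P_A V)]$ expanded via \eqref{NotExpl}. The bookkeeping device that makes the two sides agree is to repeatedly use $A^T A = I_d$ together with $\nabla_W f(A) = \pi_A \nabla f(A)$, so that $A \cs \nabla_W f(A) = 0$ (since $A^T \nabla_W f(A)$ is antisymmetric) and $A \cs \nabla f(A)$ survives only through the $B$-correction; one also uses that the left-hand side lies in $T_A$ and that $P_A V$ can be split as $V$ minus $A(A^T V)$, the latter contributing only symmetrizable pieces that drop out after the outer $P_A$.

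The main obstacle I anticipate is the careful handling of the non-commuting $d \times d$ "inner" matrices (like $A \cs \nabla f(A)$ and $A^T V$) in the product-rule expansion: the Euclidean derivative of $A \mapsto A \cs \nabla f(A)$ itself involves $f''(A)$ contracted against $A$, so one must track which factors land inside a symmetric product and which get multiplied on the left by $A$, and then verify that exactly the non-$T_A$ components are the ones removed by $\pi_A$ and $P_A$. Concretely, the correction term $P_A(\nabla_W f(A)(A \cs V))$ on the right is there precisely to cancel the contribution coming from differentiating the $A(A\cs \nabla f(A))$ factor "through the $A$ in front", and confirming this cancellation is the crux. Once the matching of terms is organized — ideally by writing everything as $P_A \cdot (\text{something}) \cdot$ (with the $\mathrm{vec}$/$\mathrm{mat}$ conventions fixed once and for all) and invoking $A^TA = I_d$ — the identity follows; the remaining steps are the routine trace and Kronecker manipulations already codified in Lemmas \ref{PropKronProd} and \ref{MatrixDeriv}.
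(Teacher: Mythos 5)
Your proposal follows essentially the same route as the paper's proof: extend $\psi_V(A)=\langle\nabla_W f(A),V\rangle$ via \eqref{FormelStiefelGradient}, compute its Euclidean gradient with the product and Kronecker/vec rules of Lemmas \ref{MatrixDeriv} and \ref{PropKronProd}, project onto $T_A$, and compare with $f''_W(A)V=P_ABP_AV$ from Proposition \ref{2ndOrDerSt}, closing with the cancellation that uses $A^TA=I_d$ and the fact that $\pi_A(AS)=0$ for symmetric $S$. One small bookkeeping remark: the correction term $P_A(\nabla_W f(A)(A\cs V))$ in fact matches the contribution from differentiating the inner symmetric product $A\cs\nabla f(A)$ (the $\nabla f(A)(A\cs V)$ term), whereas the derivative through the leading $A$ produces $V(A\cs\nabla f(A))$, which is the piece absorbed by $(A\cs\nabla f(A))\otimes I_n$ in $B$; carrying out your computation resolves this exactly as in the paper.
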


In particular, if $V \in T_A$, then $P_AV = V$ and $A \cs V = 0$, so that we obtain
\[
f''_W(A)V = \nabla_W\langle \nabla_W f(A), V \rangle.
\]

\begin{proof}
Consider the function $\psi_V(A) := \langle \nabla_W f(A), V \rangle$. In view of Proposition \ref{projSt}, any smooth extension of $f$ to a neighborhood of $W_{n,d}$ also yields a smooth extension of $\psi_V$ which is given by
\begin{align*}
\psi_V(X) &= \langle \nabla f(X), V \rangle - \langle X(X \cs \nabla f(X)), V \rangle\\
&= \langle \nabla f(X), V \rangle - \mathrm{tr}(V^TX(X \cs \nabla f(X))).
\end{align*}
Let us calculate $\nabla \psi_V(X)$. First, we have
\[
    \vec(\nabla \langle \nabla f(X), V \rangle) = f''(X) \mathrm{vec}(V).
\]
Next, we consider
\begin{align*}
\frac{d}{dX} [V^TX(X \cs \nabla f(X))] &= ((X \cs \nabla f(X)) \otimes I_d) \frac{d(V^TX)}{dX}\\ &\quad+ (I_d \otimes (V^TX)) \frac{d(X \cs \nabla f(X))}{dX},
\end{align*}
where we have applied Lemma \ref{MatrixDeriv} (1). By Lemma \ref{MatrixDeriv} (4),
\[
\frac{d(V^TX)}{dX} = I_d \otimes V^T,
\]
so that
\[
((X \cs \nabla f(X)) \otimes I_d) \frac{d(V^TX)}{dX} = (X \cs \nabla f(X)) \otimes V^T.
\]
Moreover, using Lemma \ref{MatrixDeriv} (1\&3), we have
\[
\frac{d(X^T\nabla f(X))}{dX} = (\nabla f(X)^T \otimes I_d)K_{n,d} + (I_d \otimes X^T)f''(X)
\]
as well as
\[
\frac{d(\nabla f(X)^TX)}{dX} = (X^T \otimes I_d)K_{n,d}f''(X) + I_d \otimes \nabla f(X)^T,
\]
and hence, using Lemma \ref{PropKronProd} (7), it follows that
\[
\frac{d(X \cs \nabla f(X))}{dX} = \frac{1}{2} (I_{d^2} + K_{d,d})((I_d \otimes \nabla f(X)^T) + (I_d \otimes X^T)f''(X)).
\]

Putting everything together, it follows by Lemma \ref{MatrixDeriv} (2\&5) and (switching from $d/(dX)$ to $\vec(\nabla)$, hence transposing) Lemma \ref{PropKronProd} (2) that
\begin{align*}
    \vec (\nabla \psi_V(X)) &= f''(X)\mathrm{vec}(V) - ((X \cs \nabla f(X)) \otimes V)\mathrm{vec}(I_d)\\
    &\hspace{-1cm}\quad - \frac{1}{2} ((I_d \otimes \nabla f(X)) + f''(X)(I_d \otimes X)) (I_{d^2} + K_{d,d})(I_d \otimes (X^TV))\mathrm{vec}(I_d).
\end{align*}
Using Lemma \ref{PropKronProd} (6), is possible to simplify this impression further. First, note that
\[
((X \cs \nabla f(X)) \otimes V)\mathrm{vec}(I_d) = \mathrm{vec}(V(X \cs \nabla f(X))).
\]
Moreover, we have
\[
(I_d \otimes (X^TV))\mathrm{vec}(I_d) = \mathrm{vec}(X^TV)
\]
and by \eqref{CommMat}, it follows that
\[
K_{d,d} \mathrm{vec}(X^TV) = \mathrm{vec}(V^TX),
\]
so that
\[
\frac{1}{2}(I_{d^2} + K_{d,d})(I_d \otimes (X^TV))\mathrm{vec}(I_d) = \mathrm{vec}(X \cs V).
\]
To continue,
\[
(I_d \otimes \nabla f(X)) \mathrm{vec}(X \cs V) = \mathrm{vec}(\nabla f(X)(X \cs V))
\]
as well as
\[
(I_d \otimes X)\mathrm{vec}(X \cs V) = \mathrm{vec}(X(X \cs V)).
\]
Putting everything together, we thus obtain
\begin{align*}
    \vec(\nabla \psi_V(X)) &= f''(X)\mathrm{vec}(V) - \mathrm{vec}(V(X \cs \nabla f(X)))\\
    &\quad - \mathrm{vec}(\nabla f(X)(X \cs V)) - f''(X)\mathrm{vec}(X(X \cs V)).
\end{align*}
Restricting this from $X \in \mathbb{R}^{n \times d}$ to $A \in W_{n,d}$ and recalling the projection $P_A$, we therefore have
\[
    \mathrm{vec}(\nabla \psi_V(A)) = f''(A)\mathrm{vec}(P_AV) - \mathrm{vec}(V(A \cs \nabla f(A)) + \nabla f(A)(A \cs V)).
\]

In terms of intrinsic derivatives (and now using the short-hand notation introduced in Proposition \ref{2ndOrDerSt}), we obtain
\[
\nabla_W \psi_V(A) = P_A f''(A) P_AV - P_A[V(A \cs \nabla f(A))] - P_A[\nabla f(A)(A \cs V)].
\]
Recall the matrix $B = f''(A) - (A \cs \nabla f(A)) \otimes I_n$ from \eqref{MatrixBSt}. By Proposition \ref{PropKronProd} (5),
\[
\tilde{B}\mathrm{vec}(V) = ((A \cs \nabla f(A)) \otimes I_n) \mathrm{vec}(V) = \mathrm{vec}(V(A \cs \nabla f(A))),
\]
so that if we write $B = f''(A) - \tilde{B}$, we obtain
\begin{align*}
\nabla_W \psi_V(A) &= P_A f''(A) P_AV - P_A\tilde{B}V - P_A[\nabla f(A)(A \cs V)]\\
&= P_A f_W''(A) P_AV - P_A\tilde{B}(V-P_AV) - P_A[\nabla f(A)(A \cs V)]
\end{align*}
in view of Proposition \ref{2ndOrDerSt}. As above, we note that 
\[
\tilde{B}V = (V-P_AV)(A \cs \nabla f(A)),
\]
so that altogether, we arrive at
\begin{align*}
f''_W(A)V &= \nabla_W\langle \nabla_W f(A), V \rangle + P_A[(V-P_A V)(A\cs\nabla f(A))]\\
&\quad+ P_A[\nabla f(A)(A \cs V)].
\end{align*}

To finish the proof, it remains to note that
\begin{align*}
&\quad P_A[(V-P_A V)(A\cs\nabla f(A))] + P_A[\nabla f(A)(A \cs V)] - P_A[\nabla_W f(A)(A \cs V)]\\
&= P_A[A(A\cs V)(A\cs\nabla f(A))] + P_A[A(A \cs \nabla f(A))(A\cs V)]\\
&= P_A[A((A\cs V)\cs(A\cs\nabla f(A)))] = 0,
\end{align*}
where the last step follows by an easy calculation using $A^TA = I_d$.
\end{proof}

\section{Second order modulus of gradient}\label{sec:2OrMoG}

Recall that the generalized second order modulus of the gradient on the Stiefel manifold $W_{n,d}$ is defined by
\begin{align*}
    |\nabla^{(2)}f(A)| &= |\nabla_W|\nabla_Wf(A)||\\
    &= \limsup_{A'\to A}\frac{\big||\nabla_Wf(A)|-|\nabla_Wf(A')|\big|}{|A-A'|}.
\end{align*}
Typically, explicitly calculating $|\nabla^{(2)} f(A)|$ is not easy, however, and therefore, motivated by the Euclidean or spherical calculus, we may hope for an estimate of the form $|\nabla^{(2)}f(A)| \le \lVert f''_W(A) \rVert_\mathrm{op}$. Indeed, we have the following result.

\begin{proposition}\label{Prop9.2.1St}
For any $\mathcal{C}^2$-smooth function $f$ on $W_{n,d}$, $|\nabla_W f|$ has finite Lipschitz semi-norm, and for any $A \in W_{n,d}$,
\[
|\nabla^{(2)}f(A)| = |\nabla_Wf(A)|^{-1}|f''_W(A)\nabla f_W(A)|,
\]
where the right-hand side has to be understood as $\lVert f''_W(A) \rVert_\mathrm{op}$ if $|\nabla_Wf(A)| = 0$. In particular, we always have
\[
|\nabla^{(2)}f(A)| \le \lVert f''_W(A) \rVert_\mathrm{op}.
\]
\end{proposition}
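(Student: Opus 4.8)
The plan is to reduce the statement to a one–variable computation along curves in $W_{n,d}$, exactly as in the spherical case treated in \cite{BCG17}, using the intrinsic Taylor expansions and the second–derivative description from Section \ref{sec:DerSt}. First I would fix $A \in W_{n,d}$ and treat separately the two cases $|\nabla_W f(A)| > 0$ and $|\nabla_W f(A)| = 0$. In the generic case $|\nabla_W f(A)| > 0$, the function $g := |\nabla_W f|$ is smooth near $A$ because $\nabla_W f$ is a $\mathcal{C}^1$ vector field (Proposition \ref{projSt} expresses it via Euclidean data) and $g = \sqrt{\langle \nabla_W f, \nabla_W f\rangle}$ is a smooth function of $\nabla_W f$ away from $0$; hence $|\nabla^{(2)}f(A)| = |\nabla_W g(A)|$ is genuinely the intrinsic gradient of $g$. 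By the chain rule for intrinsic gradients (which follows from the Taylor characterization \eqref{defderSt}), one has $\nabla_W g(A) = g(A)^{-1}\, \Pi$, where $\Pi$ is the intrinsic gradient of $A' \mapsto \tfrac12\langle \nabla_W f(A'), \nabla_W f(A')\rangle$. Writing $V := \nabla_W f(A)$, this last gradient is $\nabla_W \langle \nabla_W f(A), V\rangle$ — the point being that $V$ is frozen — and Proposition \ref{Prop9.1.3St} identifies $\nabla_W\langle \nabla_W f(A), V\rangle = f''_W(A)V - P_A(\nabla_W f(A)(A\cs V))$. Since $V = \nabla_W f(A) \in T_A$, we have $A \cs V = 0$, so the correction term vanishes and $\nabla_W g(A) = g(A)^{-1} f''_W(A) V = |\nabla_W f(A)|^{-1} f''_W(A)\nabla_W f(A)$, which is the claimed formula.

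Next I would address the degenerate case $|\nabla_W f(A)| = 0$. Here $A$ is a critical point of $f$ and $g = |\nabla_W f|$ need not be differentiable there, so one must bound $|\nabla^{(2)} f(A)|$ directly from the $\limsup$ definition. The idea is: for $A'$ near $A$ on the manifold, expand $\nabla_W f(A')$ to first order using the smoothness of the vector field $\nabla_W f$ and the fact that $\nabla_W f(A) = 0$; one gets $|\nabla_W f(A')| \le \lVert f''_W(A)\rVert_\mathrm{op}\,|A'-A| + o(|A'-A|)$ (the linearization of $\nabla_W f$ at a critical point is, up to projection onto $T_A$, given by $f''_W(A)$ acting on the displacement $A'-A$, whose tangential part has norm $\le |A'-A|$). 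Dividing by $|A'-A|$ and taking the $\limsup$ yields $|\nabla^{(2)}f(A)| \le \lVert f''_W(A)\rVert_\mathrm{op}$, consistent with the convention in the statement. Finally, in the generic case the bound $|\nabla^{(2)}f(A)| = |\nabla_W f(A)|^{-1}|f''_W(A)\nabla_W f(A)| \le \lVert f''_W(A)\rVert_\mathrm{op}$ is immediate from the definition of the operator norm applied to the unit vector $\nabla_W f(A)/|\nabla_W f(A)|$ (recalling $f''_W(A)\nabla_W f(A) \in T_A \subset \mathbb{R}^{n\times d}$, so the Hilbert--Schmidt norm on the right is what the operator norm controls). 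Patching the two cases gives the pointwise identity and inequality; finiteness of the Lipschitz semi-norm of $|\nabla_W f|$ then follows from compactness of $W_{n,d}$ together with continuity of $A \mapsto \lVert f''_W(A)\rVert_\mathrm{op}$, since $|\nabla^{(2)}f|$ is bounded.

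The main obstacle I anticipate is the degenerate case: justifying rigorously that near a critical point the growth of $|\nabla_W f(A')|$ is governed by $f''_W(A)$ requires care, because $\nabla_W f$ is a section of the tangent bundle, not an $\mathbb{R}^{n\times d}$-valued map with a naive derivative, and the base point $A'$ moves so the projection $\pi_{A'}$ onto $T_{A'}$ changes. The clean way around this is to work with an extension of $f$ to a neighborhood, use Proposition \ref{projSt} to write $\nabla_W f(X) = \nabla f(X) - X(X \cs \nabla f(X))$ as an honest $\mathcal{C}^1$ map on $\mathbb{R}^{n\times d}$, differentiate that, and then identify the relevant part of its Euclidean derivative at $A$ (restricted to tangent directions) with $f''_W(A)$ via Proposition \ref{2ndOrDerSt} and \eqref{MatrixBSt}; the derivative-of-the-projection terms all involve $A \cs \nabla f(A)$-type factors which either vanish or get absorbed, just as in the vanishing-correction computation at the end of the proof of Proposition \ref{Prop9.1.3St}. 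A secondary subtlety is making sure that restricting attention to $A' \in W_{n,d}$ (rather than general $A' \in \mathbb{R}^{n\times d}$) does not lose anything and that the $o(|A'-A|)$ terms are uniform — both handled by the standard fact that $W_{n,d}$ is a smooth submanifold, so $|A'-A|$ is comparable to geodesic distance near $A$.
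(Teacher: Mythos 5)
Your treatment of the non-degenerate case $|\nabla_W f(A)|>0$ is correct and takes a genuinely different, shorter route than the paper: you differentiate $g=|\nabla_W f|=\langle \nabla_W f,\nabla_W f\rangle^{1/2}$ by the chain rule, identify the intrinsic gradient of $A'\mapsto\tfrac12\langle\nabla_W f(A'),\nabla_W f(A')\rangle$ at $A$ with $\nabla_W\langle\nabla_W f(\cdot),V\rangle$ for the frozen $V=\nabla_W f(A)$, and then use Proposition \ref{Prop9.1.3St} together with $A\cs V=0$ (since $V\in T_A$). The paper instead establishes a Taylor expansion of $\langle\nabla_W f(A'),V\rangle$ uniform in unit $V$, rewrites $\langle\tilde V,A'-A\rangle=\langle L,V\rangle$, takes the supremum over $V$ and expands $|\nabla_W f(A)+L|^2$; your route avoids that bookkeeping, at the price of needing that $\nabla_W f$ extends to a $\mathcal{C}^1$ map near $A$ so that $g$ is intrinsically differentiable and \eqref{genmod} really equals $|\nabla_W g(A)|$ -- which Proposition \ref{projSt} does provide, as you note.

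The genuine gap is the critical case $|\nabla_W f(A)|=0$: the proposition asserts the \emph{equality} $|\nabla^{(2)}f(A)|=\lVert f''_W(A)\rVert_\mathrm{op}$ (that is what the convention means), whereas you only obtain the upper bound. Moreover, your sketch of even that upper bound glosses over precisely the point you flag yourself: at a critical point of the intrinsic gradient one has $\nabla f(A)=A(A\cs\nabla f(A))\neq 0$ in general, so the linearization of $X\mapsto\nabla f(X)-X(X\cs\nabla f(X))$ contains terms of the form $\xi(A\cs\nabla f(A))$, and identifying what survives with $f''_W(A)$ (rather than merely with $f''(A)$ plus corrections) requires the projection structure of Proposition \ref{2ndOrDerSt} and \eqref{MatrixBSt}. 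Both issues are resolved at once by the uniform-in-$V$ expansion the paper uses: when $\nabla_W f(A)=0$, Proposition \ref{Prop9.1.3St} gives $\langle\nabla_W f(A'),V\rangle=\langle f''_W(A)V,A'-A\rangle+o(|A'-A|)$ uniformly over unit $V$, whence, by symmetry of $f''_W(A)$, $|\nabla_W f(A')|=|f''_W(A)(A'-A)|+o(|A'-A|)$ as an \emph{equality}. Taking the $\limsup$, and noting that every unit $V\in T_A$ arises as a limit of $(A'-A)/|A'-A|$ along curves in $W_{n,d}$ while $f''_W(A)$ is symmetric with range in $T_A$ (hence kernel containing $T_A^\perp$, so its operator norm is attained on tangent directions), yields the missing lower bound $|\nabla^{(2)}f(A)|\ge\lVert f''_W(A)\rVert_\mathrm{op}$. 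A minor further point: your concluding Lipschitz argument relies on the (true but unproved) fact that a continuous function with bounded local slope on the compact manifold is Lipschitz; the paper sidesteps this by deducing Lipschitz continuity of $A\mapsto\langle\nabla_W f(A),V\rangle$ directly from the uniform bound on its intrinsic gradient and then taking the supremum over unit $V$.
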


\begin{proof}
Let us first prove that the function $|\nabla_Wf(A)|$ has finite Lipschitz semi-norm. Since the first two intrinsic derivatives of $f$ are continuous and therefore bounded on the compact manifold $W_{n,d}$, we obtain from Proposition \ref{Prop9.1.3St} that
\[
|\nabla_W \langle \nabla_Wf(A), V \rangle| \le C
\]
for any $V \in \mathbb{R}^{n \times d}$ such that $|V| \equiv \lVert V \rVert_\mathrm{HS} = 1$, where $C$ is some constant independent of $A$ and $V$. Hence, the function $A \mapsto \langle\nabla_W f(A), V\rangle$ has Lipschitz semi-norm bounded by $C$, i.\,e.
\[
|\langle \nabla_Wf(A'), V \rangle - \langle \nabla_Wf(A), V\rangle| \le Cd(A,A')
\]
for all $A, A' \in W_{n,d}$. Therefore, taking the supremum over all $V$ and applying the triangle inequality yields
\[
||\nabla_Wf(A')| - |\nabla_Wf(A)||\le |\nabla_Wf(A') - \nabla_Wf(A)| \le Cd(A',A),
\]
which had to be proven.

To show the identity for the second order modulus of the gradient, fix $A \in W_{n,d}$. By the definition of the intrinsic gradient and Proposition \ref{Prop9.1.3St}, we have
\[
\langle \nabla_Wf(A'),V\rangle = \langle \nabla_Wf(A),V\rangle + \langle \tilde{V}, A'-A\rangle + o(|A'-A|),
\]
where
\[
\tilde{V} = f''_W(A)V - P_A(\nabla_W f(A)(A \cs V)).
\]
Moreover, a closer analysis (using the integral form of the Taylor formula and the compactness of $W_{n,d}$, which implies that every continuous function is already uniformly continuous) yields that the remainder term in the Taylor expansion can be bounded independently of $V \in \mathbb{R}^{n\times d}$ such that $|V| = 1$, i.\,e.
\[
\sup_{|V| = 1} |\langle \nabla_Wf(A'),V\rangle - \langle \nabla_Wf(A),V\rangle - \langle \tilde{V}, A'-A\rangle| \le \varepsilon(|A-A'|),
\]
where $\varepsilon(t)$ is some function which satisfies $\varepsilon(t) \to 0$ as $t \to 0$.

The next step is to rewrite the Taylor formula as
\begin{equation}\label{TayF}
\langle \nabla_Wf(A'),V\rangle = \langle \nabla_Wf(A) + L,V\rangle + o(|A'-A|).
\end{equation}
To this end, we first choose a suitable $\tilde{L}$ such that $\langle \tilde{V}, A'-A\rangle = \langle \tilde{L}, V \rangle$. Obviously,
\[
    \langle f_W''(A)V, A'-A \rangle = \langle f''_W(A)(A'-A), V \rangle.
\]
Moreover,
\begin{align*}
&\quad P_A(\nabla_Wf(A)(A \cs V))\\
&= \nabla_W f(A) (A \cs V) - A(A \cs (\nabla_W f(A)(A \cs V)))\\
&= \frac{1}{2} (\nabla_W f(A)A^TV + \nabla_W f(A)V^TA) - \frac{1}{4} (AA^T\nabla_W f(A)A^TV\\
&\qquad+ AA^T\nabla_W f(A)V^TA + AA^TV\nabla_W f(A)^T A + A V^TA \nabla_W f(A)^T A).
\end{align*}
Recall that $\langle U, V \rangle = \mathrm{tr}(U^TV)$ and that the trace is invariant under cyclic permutations. Therefore, we may easily verify the general rules
\begin{equation}\label{genrul}
    \langle UVW,X \rangle = \langle U^TXW^T, V \rangle,\qquad \langle UV^TW, X \rangle = \langle WX^TU, V \rangle,
\end{equation}
where $V, X \in \mathbb{R}^{n \times d}$ and $U \in \mathbb{R}^{n \times n}$, $W \in \mathbb{R}^{d \times d}$ in the first identity or $U, W \in \mathbb{R}^{n \times d}$ in the second one. We now apply this inequality to all six terms appearing in $P_A(\nabla_Wf(A)(A\cs V))$ with $V$ as above and $X = A'-A$. For instance, for the first term this yields
\[
\langle \nabla_Wf(A)A^TV, A'-A \rangle = \langle A \nabla_Wf(A)^T(A'-A), V \rangle.
\]
Proceeding similarly, we arrive at
\[
\langle \nabla_W f(A)(A \cs V), A'-A \rangle = \langle A(\nabla_W f(A) \cs (A'-A)), V \rangle
\]
as well as
\[
\langle A(A \cs (\nabla_W f(A)(A \cs V))), A'-A \rangle = \langle A((A^T\nabla_W f(A)) \cs (A \cs (A'-A))), V \rangle.
\]
Altogether, we obtain
\[
    \tilde{L} = f_W''(A)(A'-A) - A[\nabla_W f(A) \cs (A'-A)
     - (A^T\nabla_W f(A)) \cs (A \cs (A'-A))].
\]
Now we define
\[
L := f_W''(A)(A'-A) - A(\nabla_W f(A) \cs (A'-A))
\]
To see that \eqref{TayF} holds, it remains to show that $\tilde{L} - L = o(|A'-A|)$ as $A' \to A$, $A, A' \in W_{n,d}$. To this end, note that by an easy calculation,
\[
A \cs (A'-A) = \frac{1}{2} (A'-A)^T(A-A') = o(|A'-A|).
\]
From here, the claim immediately follows by compactness arguments.

Now we take an absolute value on both sides of the Taylor formula \eqref{TayF} and take the supremum over all $V$ such that $\mathrm{vec}(V) \in S^{nd-1}$. This leads to
\[
|\nabla_Wf(A')| = |\nabla_Wf(A) + L| + o(|A'-A|).
\]
Next, we write
\[
|\nabla_Wf(A) + L|^2 = |\nabla_Wf(A)|^2 + 2 \langle \nabla_Wf(A), L \rangle + |L|^2.
\]
Noting that
\[
\langle \nabla_W f(A), A(A\cs \nabla f(A))\rangle = \mathrm{tr} (\nabla_W f(A)^T A(A\cs \nabla f(A))) = 0
\]
since $\nabla_W f(A)^T A$ is antisymmetric (as $\nabla_W f(A) \in T_A$) and $A \cs \nabla f(A)$ is symmetric, we obtain
\[
\langle \nabla_W f(A), L \rangle = \langle \nabla_Wf(A), f_W''(A)(A'-A) \rangle = \langle U, A'-A \rangle,
\]
where
\[
U := f_W''(A)\nabla_Wf(A) \equiv f_W''(A)\mathrm{vec}(\nabla_Wf(A)).
\]
Since $|L|^2 = O(|A'-A|^2)$, we obtain
\[
|\nabla_Wf(A) + L|^2 = |\nabla_Wf(A)|^2 + 2 \langle U, A'-A \rangle + o(|A'-A|).
\]

If $|\nabla_Wf(A)| > 0$, it therefore follows that
\[
|\nabla_Wf(A) + L| = |\nabla_Wf(A)| + |\nabla_Wf(A)|^{-1} \langle U, A'-A \rangle + o(|A'-A|).
\]
Hence,
\[
|\nabla_Wf(A')| - |\nabla_Wf(A)| = |\nabla_Wf(A)|^{-1} \langle U, A'-A \rangle + o(|A'-A|)
\]
and thus
\begin{align*}
\limsup_{A'\to A} \frac{\big||\nabla_Wf(A')| - |\nabla_Wf(A)|\big|}{|A'-A|}
&= |\nabla_Wf(A)|^{-1} \limsup_{A' \to A} \frac{\big|\langle U, A'-A \rangle\big|}{|A'-A|}\\
&= |\nabla_Wf(A)|^{-1} |\nabla_W\psi_U(A)|,
\end{align*}
where $\psi_U (A) := \langle U, A \rangle$. As noted after Proposition \ref{2ndOrDerSt}, $U \in T_A$, so that $\nabla_W \psi_U(A) = U$. Thus, we arrive at
\[
|\nabla^{(2)}_W(A)| = |\nabla_Wf(A)|^{-1} |f_W''(A)\nabla_Wf(A)|
\]
if $|\nabla_Wf(A)| > 0$.

It remains to consider the case where $|\nabla_Wf(A)| = 0$. Here, $L = f_W''(A)(A'-A)$, and the Taylor formula reads
\[
|\nabla_W f(A')| = |L| + o(|A'-A|).
\]
It follows that
\begin{align*}
    |\nabla^{(2)}_Wf(A)| &= \limsup_{A' \to A} \frac{|\nabla_Wf(A')|}{|A'-A|}
    = \limsup_{A' \to A} \frac{|f_W''(A)(A'-A)|}{|A'-A|}\\
    &= \limsup_{V \to 0, V \in T_A^\perp} \frac{|f_W''(A)V|}{|V|} = \lVert f_W''(A) \rVert_\mathrm{op},
\end{align*}
which finishes the proof.
\end{proof}

\section{Derivatives on Grassmann manifolds}\label{sec:DerGr}

Let us adapt the results of the previous two sections to functions on Grassmann manifolds. Much of what follows relies on similar arguments as in the Stiefel case, and for this reason we will often only sketch the arguments. To introduce a notion of differentiability on $G_{n,d}$, first recall that the tangent space in $P \in G_{n,d}$ is given by
\[
T_P := \{S \in \Rns \colon S = SP + PS\} \equiv \{S \in \Rns \colon S = [[S,P],P]\},
\]
where for any $M,N \in \mathbb{R}^{n \times n}$, $[M,N] = MN - NM$ denotes the matrix commutator. A function $f \colon G_{n,d} \to \mathbb{R}$ is differentiable at $X \in G_{n,d}$ if it admits a Taylor expansion
\begin{equation}\label{defderGr}
f(P') = f(P) + \langle M, P' - P\rangle + o\big(|X' - X|\big) \quad \mathrm{as} \ \ P' \rightarrow P, \ \ P' \in G_{n,d}
\end{equation}
with some $M \in \Rns$. Among all such $M$, there exists a unique $M_0$ of smallest (Euclidean) length, called the intrinsic (first) derivative or gradient of $f$ at $P$ and denoted $\nabla_G f(P)$. The length of $\nabla_G f(P)$ agrees with the generalized modulus of the gradient \eqref{genmod} applied to the Grassmann manifold. In passing, note that unlike in case of the Stiefel manifold, for Grassmann manifolds the Euclidean metric and the canonical metric lead to the same notion of differentiablity. As in case of the Stiefel manifold, the minimization problem \eqref{defderGr} translates into
\[
\lVert M - B \rVert \to \mathrm{min}\qquad \text{over all $B \in T_P^\perp$,}
\]
which is solved uniquely for the orthogonal projection $M$ onto $T_P$.

If we consider functions which are defined and smooth in an open neighborhood of $G_{n,d}$ in the ambient space, e.\,g.\ $\Rns$, we may take the Euclidean gradient $\nabla f(P) = \mathrm{mat}(Df(P)^T)$ (cf.\ Section \ref{sec:Nabf}) and project it onto $T_P$, which gives back $\nabla_G f(P)$. As the projection $\pi_P \colon \Rns \to T_P$ is given by
\begin{equation}\label{projfGr}
\pi_P M := [P,[P,M]] = PM + MP - 2PMP,
\end{equation}
we immediately arrive at the following analogue of Proposition \ref{projSt}.

\begin{proposition}\label{projGr}
Let $f$ be defined and $\mathcal{C}^1$-smooth in some open neighborhood of $G_{n,d}$ in $\Rns$. Then, the intrinsic first derivative of $f$ at $P \in G_{n,d}$ is given by the projection onto $T_P$
\[
\nabla_G f(P) = \pi_P\nabla f(P) = [P,[P,\nabla f(P)]].
\]
In particular, $|\nabla_G f(P)| \leq |\nabla f(P)|$ for any $P \in G_{n,d}$.
\end{proposition}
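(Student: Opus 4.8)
The plan is to follow the route used for Proposition~\ref{projSt}: first reduce the intrinsic gradient to an orthogonal projection onto the tangent space, and then invoke the explicit formula~\eqref{projfGr} for $\pi_P$. Concretely, I would start from the observation that a $\mathcal{C}^1$ extension of $f$ to an open neighborhood of $G_{n,d}$ in $\Rns$ admits the ordinary first-order Taylor expansion
\[
f(P') = f(P) + \langle \nabla f(P), P' - P\rangle + o\big(|P'-P|\big), \qquad P' \to P,
\]
where $\nabla f(P) \in \Rns$ is the Euclidean gradient introduced before the proposition. Restricting $P'$ to $G_{n,d}$ shows that $M = \nabla f(P)$ is admissible in~\eqref{defderGr}. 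Then, exactly as in Section~\ref{sec:DerSt}, two symmetric matrices $M$ and $M-B$ both satisfy~\eqref{defderGr} if and only if $\langle B, P'-P\rangle = o(|P'-P|)$ as $P' \to P$ within $G_{n,d}$, which is equivalent to $B \in T_P^\perp$; hence the admissible $M$ of smallest Euclidean norm is the orthogonal projection of $\nabla f(P)$ onto $T_P$, i.e.\ $\nabla_G f(P) = \pi_P \nabla f(P)$. Combining this with~\eqref{projfGr} yields $\nabla_G f(P) = [P,[P,\nabla f(P)]]$, and $|\nabla_G f(P)| \le |\nabla f(P)|$ follows from the contractivity of orthogonal projections.

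To make the argument self-contained, I would also verify that the map $\pi_P \colon M \mapsto [P,[P,M]] = PM + MP - 2PMP$ is genuinely the orthogonal projection of $\Rns$ onto $T_P$. This reduces to three short computations with $P^2 = P = P^T$: (i) for every $M \in \Rns$ the symmetric matrix $S := PM + MP - 2PMP$ satisfies $SP + PS = S$, so $S \in T_P$; (ii) every $S \in T_P$ satisfies $PSP = 0$ (multiply $S = SP + PS$ on the left by $P$), whence $\pi_P S = PS + SP = S$, so $\pi_P$ restricts to the identity on $T_P$; and (iii) $\pi_P$ is self-adjoint with respect to $\langle A, B\rangle = \mathrm{tr}(AB)$ on $\Rns$, which is immediate from cyclicity of the trace. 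Properties (i)--(iii) characterise $\pi_P$ as the orthogonal projection onto $T_P$, so no separate check of $T_P^\perp = \ker \pi_P$ is needed.

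The only genuinely non-algebraic point is the equivalence invoked in the first paragraph: that $\langle B, P'-P\rangle = o(|P'-P|)$ along all approaches $P' \to P$ inside $G_{n,d}$ is equivalent to $B \in T_P^\perp$. I would establish this by producing sufficiently many smooth curves through $P$: for an antisymmetric $X$, the curve $\gamma_X(t) = e^{tX}Pe^{-tX}$ lies in $G_{n,d}$ with $\gamma_X(0) = P$ and $\dot\gamma_X(0) = [X,P] \in \Rns$, and a rank computation in a basis diagonalising $P$ shows that these velocities span a space of dimension $d(n-d) = \mathrm{dim}(G_{n,d})$, hence all of $T_P$. Consequently the unit approach directions $(P'-P)/|P'-P|$ accumulate on the entire unit sphere of $T_P$, which gives the asserted equivalence. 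Everything else parallels the Stiefel discussion of Sections~\ref{sec:DerSt} and~\ref{sec:2OrMoG} verbatim, so I expect this tangent-direction argument to be the main (and essentially only) obstacle.
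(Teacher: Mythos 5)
Your proposal is correct and follows essentially the same route as the paper: the paper also identifies $\nabla_G f(P)$ as the orthogonal projection of the Euclidean gradient onto $T_P$ and then invokes the formula $\pi_P M = [P,[P,M]]$, treating the Grassmann case as an immediate adaptation of the Stiefel argument. Your additional verifications (that $\pi_P$ really is the orthogonal projection, and the curve argument $t \mapsto e^{tX}Pe^{-tX}$ showing the admissible perturbations are exactly $T_P^\perp$) simply fill in details the paper leaves implicit.
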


In fact, sometimes yet a further embedding might be convenient, so that we regard $G_{n,d}$ as a submanifold of $\mathbb{R}^{n \times n}$ and take the Euclidean derivatives of some extension of $f$ to an open neighborhood in $\mathbb{R}^{n \times n}$. However, in this case, we may project $\nabla f(P)$ onto the tangent space of $\Rns$ (which equals $\Rns$) by applying the projection $\pis \colon \mathbb{R}^{n \times n} \to \Rns$ given by $\pis(M) := (M + M^T)/2$. Then, we may proceed as in Proposition \ref{projGr} for $\pis(\nabla f(P))$.

For any $C^2$-smooth function $f$ on $G_{n,d}$ at a given point $P \in W_{n,d}$, the intrinsic second order derivative is the matrix $B \in \mathbb{R}^{n^2 \times n^2}$ of smallest Hilbert--Schmidt norm which satisfies
\begin{equation}\label{Taylor2ndOrderGr}
f(P') = f(P) + \langle\nabla_G f(P), P'-P\rangle + \frac{1}{2} \langle B(P'-P), P'-P \rangle + o\big(|P' - P|^2\big)
\end{equation}
as $P' \rightarrow P$, $P' \in G_{n,d}$. It will be denoted $f_G''(X)$. Considering functions $f$ which are defined and smooth in some open neighborhood of $G_{n,d}$ in $\Rns$, it is possible to express the intrinsic second order derivative in terms of the Euclidean derivatives of $f$. Instead of providing the details, we refer to \cite[Theorem 2.4]{HHT07} where corresponding calculations have been done. Adapting them to our framework yields the following analogue of Proposition \ref{2ndOrDerSt}.

\begin{proposition}\label{2ndOrDerGr}
The intrinsic second derivative of $f$ at each $P \in G_{n,d}$ is given by the relation
\[
f_G''(P)V = \pi_P f''(P) \pi_PV - [P,[\nabla f(P),\pi_PV]]
\]
for any $V \in \Rns$. Here, the expression on the right hand side has to be understood as
\[
\vec(\pi_P \mathrm{mat}[f''(P)\mathrm{vec}(\pi_PV)]) - \vec([P,[\nabla f(P),\pi_PV]]).
\]
\end{proposition}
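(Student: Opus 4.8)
The plan is to mirror the derivation of Proposition~\ref{2ndOrDerSt} and Proposition~\ref{Prop9.1.3St} on the Grassmann side, reducing everything to a projection statement. Since $G_{n,d}$ is a submanifold of $\Rns$ with tangent space $T_P$ and projection $\pi_P$ given by \eqref{projfGr}, the same abstract argument as in the Stiefel case applies: starting from the Euclidean second-order Taylor expansion of a smooth extension $f$, the matrix $B$ appearing in \eqref{Taylor2ndOrderGr} must be corrected by the ``curvature'' term coming from the fact that $P' - P$ for $P, P' \in G_{n,d}$ is not an arbitrary element of $\Rns$ (it satisfies a second-order constraint forcing $P'$ onto the manifold). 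The minimum Hilbert--Schmidt-norm representative is then the projection of the corrected $B$ onto the orthogonal complement of the space of matrices $C$ with $\langle CX,X\rangle = 0$ for all $X \in T_P$, which amounts to sandwiching with $\pi_P$.

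Concretely, first I would fix $P \in G_{n,d}$ and a curve (or just nearby point) $P' \in G_{n,d}$ and expand the constraint $P'^2 = P'$ to second order: writing $P' = P + H$, the relation $P'^2 = P'$ gives $PH + HP - H = -H^2$, so that the component of $H$ along $T_P^\perp$ is of order $|H|^2$, with the quadratic term expressible through $H$ (equivalently through $\pi_P H$) up to higher order. Plugging this into the Euclidean expansion $f(P') = f(P) + \langle \nabla f(P), H\rangle + \tfrac12\langle f''(P)H,H\rangle + o(|H|^2)$ and collecting the $\langle \nabla f(P), H\rangle$ contribution from the quadratic-in-$H$ part of the constraint produces exactly a correction of the form $-[P,[\nabla f(P), \cdot\,]]$ acting on $\pi_P H$, while the genuine quadratic term $\tfrac12\langle f''(P)H,H\rangle$ may be replaced by $\tfrac12\langle f''(P)\pi_P H, \pi_P H\rangle$ modulo $o(|H|^2)$ since $H - \pi_P H = O(|H|^2)$. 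This identifies a valid $B$, namely $V \mapsto \pi_P f''(P)\pi_P V - [P,[\nabla f(P),\pi_P V]]$; one should check this $B$ is symmetric (as an operator on $\R^{n^2}$, using that $f''(P)$ is symmetric and $\pi_P$ is a self-adjoint projection, together with a trace-cyclicity computation for the commutator term, analogous to \eqref{genrul}).

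The second half is the minimality: any other valid $B'$ differs from $B$ by a $C$ with $\langle C H, H\rangle = o(|H|^2)$ along $G_{n,d}$, equivalently $\langle CX, X\rangle = 0$ for all $X \in T_P$; the space $L$ of such (symmetric) $C$ has orthogonal complement consisting precisely of matrices of the shape $\pi_P(\cdot)\pi_P$ in the appropriate sense, and since the proposed $B$ already lies in $L^\perp$ (because $\pi_P B \pi_P = B$ by construction, using $\pi_P^2 = \pi_P$ and the commutator identity $[P,[\nabla f(P),\pi_P V]] \in T_P$), it is the unique minimal-norm representative. I would cite \cite[Theorem 2.4]{HHT07} for the bookkeeping of the commutator correction term rather than redo it, and simply indicate that translating their intrinsic-Hessian formula into the $\pi_P$-notation of \eqref{projfGr} yields the stated expression, in parallel with how Proposition~\ref{2ndOrDerSt} was obtained.

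The main obstacle is the algebra of the correction term: verifying that the contribution of the second-order part of the constraint $P'^2 = P'$, when paired against $\nabla f(P)$, collapses to the double commutator $[P,[\nabla f(P),\pi_P V]]$ rather than some messier expression, and checking symmetry of the resulting operator. This is the Grassmann analogue of the cancellation $P_A[A((A\cs V)\cs(A\cs\nabla f(A)))]=0$ at the end of the proof of Proposition~\ref{Prop9.1.3St}, and as there it should follow from a direct computation using $P^2 = P$ and trace-cyclicity; but the non-commutativity of $P$ with $\nabla f(P)$ makes it more delicate than the Stiefel case, which is precisely why the reference to \cite{HHT07} is invoked instead of an independent derivation.
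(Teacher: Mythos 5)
Your proposal is correct and follows essentially the route the paper itself takes: the paper offers no independent proof of this proposition, but notes that the Stiefel-type minimization argument adapts and defers the computation to \cite[Theorem 2.4]{HHT07}, exactly as you do. Your sketch of the missing details is consistent with that adaptation --- expanding the constraint $P'^2=P'$ to second order does produce a normal displacement whose pairing with $\nabla f(P)$ equals the quadratic form of $-[P,[\nabla f(P),\pi_P\,\cdot\,]]$ on $T_P$, and since this double commutator lies in $T_P$ the candidate operator satisfies $\pi_P B\pi_P=B$, giving minimality by the same $L^\perp$-projection argument as in Proposition \ref{2ndOrDerSt}.
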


Note that $[P,[\nabla f(P),\pi_PV]] \in T_P$. In particular, for all $P \in G_{n,d}$ and $V \in \Rns$, $f_G''(P)V \in T_P$, $f''_G(P)P = 0$, and hence $\langle f_G''(P)V, P \rangle = 0$ for all $V \in \Rns$. Furthermore, we have the contraction property
\[
\|f_G''(P)\|_\mathrm{HS} \leq \|f''(P) - [P,[\nabla f(P),\pi_PV]]\|_{{\rm HS}},
\]
which also holds for the operator norm. Similarly to the first order case, sometimes it is necessary to extend $f$ to a smooth function on some neighborhood in $\mathbb{R}^{n \times n}$. In this case, we may take the usual Euclidean Hessian $f''(P)$, from which we get back the Hessian on $\Rns$ by considering $\pis f''(P) \pis$.

Moreover, we have the following analogue of Proposition \ref{Prop9.1.3St}.

\begin{proposition}\label{Prop9.1.3Gr}
Given a $C^2$-smooth function $f$ on $G_{n,d}$, for all $P \in G_{n,d}$ and $V \in \Rns$, we have
\[
f_G''(P)V = \nabla_G\psi_V(P) - [P,[\nabla_G f(P), V]].
\]
Here, the left-hand side has to be read as $f''(W)\mathrm{vec}(V)$ and the right-hand side has to be understood as vectorized.
\end{proposition}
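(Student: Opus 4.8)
The plan is to run the argument of Proposition~\ref{Prop9.1.3St} with the Stiefel projection replaced by the Grassmann projection $\pi_P$ from \eqref{projfGr} and with Proposition~\ref{2ndOrDerGr} in the role of Proposition~\ref{2ndOrDerSt}. Fix $P \in G_{n,d}$ and $V \in \Rns$, pass to a $\mathcal{C}^2$-smooth extension of $f$ to a neighbourhood of $G_{n,d}$ in $\Rns$, and abbreviate $g := \nabla f(P)$ and $f'' := f''(P)$ for its Euclidean first and second derivatives. Set $\psi_V(P) := \langle \nabla_G f(P), V\rangle$. Since $\nabla_G f(X) = \pi_X \nabla f(X)$ on $G_{n,d}$ by Proposition~\ref{projGr} and $\pi_P$ is self-adjoint on $\Rns$ (being the orthogonal projection onto $T_P$), the function
\[
\psi_V(X) := \langle \nabla f(X), XV + VX - 2XVX\rangle
\]
is a smooth extension of $\psi_V$ (at $X = P$ it equals $\langle \nabla f(P), \pi_P V\rangle = \langle \pi_P \nabla f(P), V\rangle$).

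The first step is to compute the Euclidean gradient $\nabla \psi_V$ by the product rule, splitting it into the contribution in which $\nabla f(X)$ is differentiated and the one in which the bracket $XV + VX - 2XVX$ is differentiated. The first, evaluated at $X = P$, is $\mathrm{mat}(f''\,\vec(\pi_P V))$. For the second we freeze $\nabla f$ at $g$ and differentiate $\langle g, XV\rangle$, $\langle g, VX\rangle$ and $\langle g, XVX\rangle$ in $X$ via Lemma~\ref{MatrixDeriv}; passing from $d/(dX)$ to $\vec(\nabla)$ (hence transposing) and using $P^2 = P$ together with the symmetry of $g$, $V$ and $P$, this contribution comes out to
\[
R := gV + Vg - 2gPV - 2VPg \in \Rns .
\]
Hence $\vec(\nabla\psi_V(P)) = f''\,\vec(\pi_P V) + \vec(R)$.

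The second step is to project onto $T_P$. As $\psi_V$ is the restriction of the extension above, $\nabla_G\psi_V(P) = \pi_P\nabla\psi_V(P) = \pi_P f'' \pi_P V + \pi_P R$, the first summand being understood in the short-hand notation of Proposition~\ref{2ndOrDerGr}. That proposition gives $\pi_P f'' \pi_P V = f_G''(P)V + [P,[\nabla f(P), \pi_P V]]$, so rearranging,
\[
f_G''(P)V = \nabla_G\psi_V(P) - [P,[\nabla f(P), \pi_P V]] - \pi_P R .
\]
Comparing with the claimed identity, the proof reduces to the purely algebraic statement
\[
[P,[\nabla f(P), \pi_P V]] + \pi_P R = [P,[\nabla_G f(P), V]] .
\]

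This last identity is the main obstacle --- not conceptually, but in the bookkeeping, just as the closing ``easy calculation using $A^TA = I_d$'' is in the proof of Proposition~\ref{Prop9.1.3St}. The way I would handle it is to substitute $\nabla_G f(P) = \pi_P g = Pg + gP - 2PgP$ and $\pi_P V = PV + VP - 2PVP$, expand all nested commutators and all occurrences of $\pi_P$, collapse every word in $P$, $g$, $V$ using $P^2 = P$, and compare coefficients of the resulting monomials; they match term by term, so the difference vanishes. Alternatively one may defer to the parallel computation in \cite[Theorem~2.4]{HHT07} underlying Proposition~\ref{2ndOrDerGr}. With this identity in hand, the proposition follows, and, as in the Stiefel case, for $V \in T_P$ one has $\pi_P V = V$ and the commutator term drops out.
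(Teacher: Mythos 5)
Your proposal is correct and follows essentially the same route as the paper: extend $\psi_V$ smoothly off the manifold, compute the Euclidean gradient by the product rule, project onto $T_P$, invoke Proposition \ref{2ndOrDerGr}, and reduce everything to a commutator identity in $P$, $g=\nabla f(P)$ and $V$ --- your expression $R=gV+Vg-2gPV-2VPg$ is right, and the final identity $[P,[g,\pi_PV]]+\pi_P R=[P,[\pi_P g,V]]$, which you only assert, does indeed check out upon expanding with $P^2=P$ (the paper likewise leaves this as a ``direct calculation''). The only real difference is organizational: by using the self-adjointness of $\pi_X$ on $\Rns$ to move the projection onto the fixed matrix $V$, you avoid differentiating $\pis(\nabla f(X))$ inside the bracket and hence the commutation-matrix ($K_{n,n}$) bookkeeping of the paper's computation, which is a harmless and slightly cleaner variant of the same argument.
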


In particular, if $V \in T_P$ it follows from \cite[Lemma 2.2]{HHT07} that
\[
f''_G(P)V = \nabla_G\langle \nabla_G f(P), V \rangle.
\]

Finally, we also have an analogue of Proposition \ref{Prop9.2.1St}.

\begin{proposition}\label{Prop9.2.1Gr}
For any $\mathcal{C}^2$-smooth function $f$ on $G_{n,d}$, $|\nabla_G f|$ has finite Lipschitz semi-norm, and for any $P \in G_{n,d}$,
\[
|\nabla^{(2)}_Gf(P)| = |\nabla_Gf(P)|^{-1}|f''_G(P)\nabla f_G(P)|,
\]
where the right-hand side has to be understood as $\lVert f''_G(P) \rVert_\mathrm{op}$ if $|\nabla_Gf(P)| = 0$. In particular, we always have
\[
|\nabla^{(2)}_Gf(P)| \le \lVert f''_G(P) \rVert_\mathrm{op}.
\]
\end{proposition}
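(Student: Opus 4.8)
The plan is to mirror the proof of Proposition \ref{Prop9.2.1St} step by step, exploiting the analogues developed in Section \ref{sec:DerGr}. First I would establish that $|\nabla_G f|$ is Lipschitz: since $f$ is $\mathcal{C}^2$ on the compact manifold $G_{n,d}$, its first two intrinsic derivatives are continuous, hence bounded, so Proposition \ref{Prop9.1.3Gr} gives a uniform bound $|\nabla_G \langle \nabla_G f(P), V \rangle| \le C$ over all $P \in G_{n,d}$ and all $V \in \Rns$ with $|V| = 1$. Consequently each map $P \mapsto \langle \nabla_G f(P), V \rangle$ is $C$-Lipschitz, and taking the supremum over $V$ together with the triangle inequality yields $\big||\nabla_G f(P')| - |\nabla_G f(P)|\big| \le |\nabla_G f(P') - \nabla_G f(P)| \le C\,d(P,P')$.

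For the identity itself, I would fix $P \in G_{n,d}$ and write, using the definition of the intrinsic gradient and Proposition \ref{Prop9.1.3Gr}, the first-order expansion
\[
\langle \nabla_G f(P'), V \rangle = \langle \nabla_G f(P), V \rangle + \langle \tilde V, P' - P \rangle + o(|P'-P|),
\]
with $\tilde V = f_G''(P)V + [P,[\nabla_G f(P), V]]$ (or the $\pi_P\nabla f(P)$ version), the remainder being uniform over $|V| = 1$ by the usual integral Taylor formula plus compactness. I would then transpose the bilinear pairing to rewrite $\langle \tilde V, P'-P\rangle = \langle L, V\rangle$ for a suitable $L = L_{P'}$, using the symmetry of trace under cyclic permutation and the self-adjointness of $f_G''(P)$; the commutator term $[P,[\nabla_G f(P), P'-P]]$ should, after moving it to the $V$-side, produce a term that is $o(|P'-P|)$ because $P,P'$ are both idempotent, exactly as $A \cs (A'-A) = o(|A'-A|)$ was used in the Stiefel case (here one uses $(P'-P)^2 = P'+P - P'P - PP'$ and $P'P - PP' \to 0$). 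So the effective $L$ reduces to $f_G''(P)(P'-P)$ up to lower order.

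Taking absolute values and the supremum over $V$ with $\vec(V) \in S^{n^2-1}$ gives $|\nabla_G f(P')| = |\nabla_G f(P) + L| + o(|P'-P|)$. Expanding $|\nabla_G f(P) + L|^2 = |\nabla_G f(P)|^2 + 2\langle \nabla_G f(P), L\rangle + |L|^2$, the cross term collapses to $\langle U, P'-P\rangle$ with $U := f_G''(P)\nabla_G f(P)$, using that $f_G''(P)$ is self-adjoint and that the stray pieces of $L$ pair to zero against $\nabla_G f(P)$ (using $f_G''(P)V \in T_P$ and the orthogonality relations noted after Proposition \ref{2ndOrDerGr}); and $|L|^2 = O(|P'-P|^2)$. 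If $|\nabla_G f(P)| > 0$ one divides, obtaining
\[
|\nabla_G f(P')| - |\nabla_G f(P)| = |\nabla_G f(P)|^{-1}\langle U, P'-P\rangle + o(|P'-P|),
\]
so the $\limsup$ equals $|\nabla_G f(P)|^{-1}|\nabla_G \psi_U(P)|$ with $\psi_U(P) = \langle U, P\rangle$; since $U \in T_P$, one has $\nabla_G \psi_U(P) = U$, giving the claimed formula. If $|\nabla_G f(P)| = 0$, then $L = f_G''(P)(P'-P)$ and $|\nabla_G f(P')| = |f_G''(P)(P'-P)| + o(|P'-P|)$, so the $\limsup$ becomes $\sup_{V \to 0,\, V \perp T_P}|f_G''(P)V|/|V| = \lVert f_G''(P)\rVert_\mathrm{op}$ (here $f_G''(P)$ annihilates $T_P^\perp$'s complement appropriately, just as $f_W''(A)A = 0$). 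The bound $|\nabla^{(2)}_G f(P)| \le \lVert f_G''(P)\rVert_\mathrm{op}$ then follows immediately since $|f_G''(P)\nabla_G f(P)| \le \lVert f_G''(P)\rVert_\mathrm{op}\,|\nabla_G f(P)|$.

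The main obstacle I anticipate is the bookkeeping in the transposition step: verifying that the commutator contribution $[P,[\nabla_G f(P), P'-P]]$ (respectively the Euclidean-derivative correction from Proposition \ref{2ndOrDerGr}) genuinely contributes only $o(|P'-P|)$ after being re-expressed as a pairing against $V$, and that no first-order-in-$(P'-P)$ remnant survives to spoil the clean identity. This is the Grassmann counterpart of the $\tilde L - L = o(|A'-A|)$ computation in the Stiefel proof, and it again hinges crucially on $P^2 = P$, $(P')^2 = P'$; I would isolate it as a short lemma-style calculation rather than inline it.
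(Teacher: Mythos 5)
Your skeleton correctly mirrors the paper's proof (Lipschitz part via Proposition \ref{Prop9.1.3Gr} plus compactness, transposition of the pairing, cross-term computation, degenerate case), but the step you yourself single out as the main obstacle is resolved incorrectly, and this is a genuine error rather than a bookkeeping issue. Writing $S=\nabla_Gf(P)$ and $D=P'-P$, transposing the commutator contribution against symmetric $V$ gives $\langle [P,[S,V]],D\rangle=\langle [S,[P,D]],V\rangle$ (equivalently, the symmetric part of the paper's $2[S,PD]$). Here $[P,D]=[P,P']=PP'-P'P$, and this is generically of the \emph{same} order as $|P'-P|$, not $o(|P'-P|)$: the idempotence identity you invoke, $(P'-P)^2=P+P'-PP'-P'P$, controls the \emph{anti}commutator, $PD+DP=D-D^2$, so the true Grassmann analogue of $A\cs(A'-A)=o(|A'-A|)$ concerns $PD+DP-D$, not $[P,D]$. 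Concretely, take $n=2$, $d=1$, $P=e_1e_1^T$, $P'$ the projection onto $(\cos\theta,\sin\theta)$ and $S=\begin{pmatrix}0&b\\ b&0\end{pmatrix}\in T_P$ with $b\neq0$: then $[S,[P,D]]=\mathrm{diag}(-2bt,2bt)$ with $t=\sin\theta\cos\theta$, of order $\theta\sim|P'-P|$. So the claim that ``the effective $L$ reduces to $f_G''(P)(P'-P)$ up to lower order'' is false whenever $\nabla_Gf(P)\neq0$; this is precisely where the Grassmann case deviates from the Stiefel computation $\tilde L-L=o(|A'-A|)$.

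The correct handling is the one the paper uses, and which you only gesture at with ``the stray pieces of $L$ pair to zero against $\nabla_Gf(P)$'': keep $L=f_G''(P)(P'-P)+2[\nabla_Gf(P),P(P'-P)]$ (it is $O(|P'-P|)$, which suffices for $|L|^2=O(|P'-P|^2)$), and eliminate the commutator term only in the cross term, using $\langle S,[S,M]\rangle=\tr(S(SM-MS))=0$ by cyclicity of the trace, so that $\langle\nabla_Gf(P),L\rangle=\langle f_G''(P)\nabla_Gf(P),P'-P\rangle$. With this substitution the rest of your argument (the expansion of $|\nabla_Gf(P)+L|$, the identification of $\nabla_G\psi_U(P)=U$ for $U=f_G''(P)\nabla_Gf(P)\in T_P$, and the case $|\nabla_Gf(P)|=0$, where the commutator term vanishes anyway) goes through exactly as in the paper; as written, however, the justification for discarding the commutator term would fail.
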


As the proofs of Proposition \ref{Prop9.1.3Gr} and Proposition \ref{Prop9.2.1Gr} are mostly an adaption of the arguments known from the Stiefel case, we defer them to the appendix.

\section{Proofs}\label{sec:Sob}

To prepare the proofs of our main results, we briefly revisit the representations of the Stiefel and Grassmann manifolds we use in this paper. In particular, we shall discuss the Lipschitz properties of the map $\pist \colon W_{n,d} \to G_{n,d}$, $A \mapsto AA^T \equiv P_A$. Clearly, $\pist$ is $2\sqrt{d}$-Lipschitz as
\[
\lVert AA^T - A'A'^T \rVert_\mathrm{HS} \le \lVert A(A^T - A'^T) \rVert_\mathrm{HS} + \lVert (A-A')A'^T \rVert_\mathrm{HS} \le 2\sqrt{d} \lVert A-A' \rVert_\mathrm{HS}
\]
since $\lVert A \rVert_\mathrm{HS}, \lVert A' \rVert_\mathrm{HS} = \sqrt{d}$. However, this Lipschitz constant is too weak for our purposes. To avoid the dependency on $d$, more subtle arguments are needed.

Recall that if $M_1, M_2 \subset \mathbb{R}^n$ are two subspaces of dimension $d$, the principal angles $\theta_j$, $j = 1, \ldots, d$, between $M_1$ and $M_2$ are recursively defined by
\[
\cos \theta_j = \max_{u \in M_1} \max_{v \in M_2} \lvert \langle u, v \rangle \rvert = \langle u_j, v_j \rangle
\]
subject to the constraints
\[
\langle u_i, u \rangle = 0,\quad \langle v_i, v \rangle = 0,\quad i = 1, 2, \ldots, j-1.
\]
The vectors $\{u_1, \ldots, u_d\}$, $\{v_1, \ldots, v_d\}$ are called the principal vectors of the pair $(M_1,M_2)$. The concept of principal angles goes back to Jordan. Here we mainly follow the survey article \cite{Ga08}.

Note that the principal angles are uniquely defined and satisfy $0 \le \theta_1 \le \ldots \le \theta_d \le \pi/2$, while the principal vectors are not unique. However, they form orthonormal $d$-frames and can thus be interpreted as elements $U,V \in W_{n,d}$. Moreover, we have $\langle u_i, v_j \rangle = \delta_{ij} \cos\theta_i$. By \cite{Af57}, the respective orthogonal projections $P_U = \pist(U)$ and $P_V = \pist(V)$ satisfy
\[
P_UP_Vu_j = (\cos^2\theta_j)u_j,\quad P_VP_U v_j = (\cos^2\theta_j)v_j,\quad j = 1, \ldots, d,
\]
i.\,e., the non-zero eigenvalues of the matrices $P_UP_V$ and $P_VP_U$ are $\cos^2\theta_j$, $j = 1, \ldots, d$. This relation extends to all $A, B \in W_{n,d}$ such that $A = UO$ and $B = VO'$ for some $O,O' \in O(d)$ (since $P_A = P_U$ and $P_B = P_V$). Note also that by definition of the principal angles, for any $A,A' \in W_{n,d}$
\[
\max_{O \in O(d)} \langle A, AO' \rangle = \langle A,A'' \rangle = \tr(A^TA'') = \sum_{j=1}^d \cos\theta_j,
\]
where $\theta_j$ are the principal angles between the two subspaces induced by $A$ and $A'$ and $A''$ is the Stiefel matrix maximizing the term on the left-hand side.

\begin{lemma}\label{pistLip}
The map $\pist \colon W_{n,d} \to G_{n,d}$, $A \mapsto AA^T = P_A$ is $\sqrt{2}$-Lipschitz.
\end{lemma}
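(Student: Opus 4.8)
The goal is to bound $\lVert P_A - P_B \rVert_{\mathrm{HS}}^2$ by $2\lVert A - B \rVert_{\mathrm{HS}}^2$ for all $A, B \in W_{n,d}$. The plan is to express both quantities in terms of the principal angles $\theta_1, \ldots, \theta_d$ between the two $d$-dimensional subspaces spanned by the columns of $A$ and $B$, exploiting the invariance of $\lVert P_A - P_B \rVert_{\mathrm{HS}}$ under the action of $O(d)$ on $A$ and $B$ separately (since $P_{AO} = P_A$), while choosing the $O(d)$-representatives to make $\lVert A - B \rVert_{\mathrm{HS}}$ as small as possible. First I would recall from the discussion preceding the lemma that the nonzero eigenvalues of $P_AP_B$ (equivalently $P_BP_A$) are $\cos^2\theta_j$, $j = 1, \ldots, d$. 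Then
\[
\lVert P_A - P_B \rVert_{\mathrm{HS}}^2 = \tr(P_A) + \tr(P_B) - 2\tr(P_AP_B) = 2d - 2\sum_{j=1}^d \cos^2\theta_j = 2\sum_{j=1}^d \sin^2\theta_j,
\]
using $P_A^2 = P_A$, $P_B^2 = P_B$, $\tr(P_A) = \tr(P_B) = d$.

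Next I would handle the right-hand side. Since $P_A$ depends only on the subspace, replacing $A$ by $AO$ and $B$ by $BO'$ does not change the left-hand side, so it suffices to find \emph{some} pair of representatives for which $\lVert A - B \rVert_{\mathrm{HS}}^2 \ge \sum_j \sin^2\theta_j$; the cleanest choice is to take $A, B$ to be the principal-vector frames $U = (u_1, \ldots, u_d)$, $V = (v_1, \ldots, v_d)$, for which $\langle u_i, v_j \rangle = \delta_{ij}\cos\theta_i$ as recorded above. For this pair,
\[
\lVert U - V \rVert_{\mathrm{HS}}^2 = \lVert U \rVert_{\mathrm{HS}}^2 + \lVert V \rVert_{\mathrm{HS}}^2 - 2\langle U, V \rangle = 2d - 2\sum_{j=1}^d \cos\theta_j = 2\sum_{j=1}^d (1 - \cos\theta_j).
\]
It then remains to verify the elementary pointwise inequality $\sin^2\theta \le 2(1 - \cos\theta)$ for $\theta \in [0, \pi/2]$: writing $\sin^2\theta = 1 - \cos^2\theta = (1-\cos\theta)(1+\cos\theta)$, this is just $1 + \cos\theta \le 2$, which holds trivially. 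Summing over $j$ gives $\lVert P_A - P_B \rVert_{\mathrm{HS}}^2 = 2\sum_j \sin^2\theta_j \le 2 \cdot 2\sum_j(1-\cos\theta_j) = 2\lVert U - V \rVert_{\mathrm{HS}}^2$, and since $\lVert U - V \rVert_{\mathrm{HS}}^2$ is an admissible value of $\lVert A' - B' \rVert_{\mathrm{HS}}^2$ over the orbits, we obtain the claim — this indeed matches the constant $\sqrt{2}$ in the statement, explaining why the factor $8$ in \eqref{Lipschitz} becomes $16$ after pushing forward.

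The only genuine subtlety — the part I would be most careful about — is the legitimacy of replacing the \emph{arbitrary} $A, B$ by the specific principal-vector representatives $U, V$: one must check that the principal vectors really do form orthonormal frames (so that $U, V \in W_{n,d}$, as asserted in the text before the lemma) and that $P_U = P_A$, $P_V = P_B$, i.e. that $U$ spans the same subspace as $A$ and likewise for $V$. Both facts are part of the standard theory of principal angles recalled above (the principal vectors of $M_1$ form an orthonormal basis of $M_1$ by construction of the recursive maximization), so no new work is needed; I would simply cite the preceding discussion and \cite{Ga08,Af57}. Everything else is the short trigonometric computation and the two applications of $\tr(P_A) = d$ and cyclicity of the trace.
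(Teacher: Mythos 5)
There is a genuine gap in your final reduction. The Lipschitz claim must hold for \emph{every} pair $A,B\in W_{n,d}$: you need $\lVert P_A-P_B\rVert_{\mathrm{HS}}^2=2\sum_j\sin^2\theta_j\le 2\lVert A-B\rVert_{\mathrm{HS}}^2$ for the \emph{given} $A,B$, and while the left-hand side depends only on the two subspaces, the right-hand side depends on the chosen representatives. So it does \emph{not} suffice ``to find some pair of representatives'' for which the inequality holds, and the closing sentence (``since $\lVert U-V\rVert_{\mathrm{HS}}^2$ is an admissible value \dots\ we obtain the claim'') is a non sequitur: proving the bound for the principal-vector frames $U,V$ says nothing about an arbitrary pair $A,B$ in the same orbits unless you also know $\lVert U-V\rVert_{\mathrm{HS}}\le\lVert A-B\rVert_{\mathrm{HS}}$. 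Had your exhibited representatives happened to be distance-increasing ones, the argument would prove nothing.

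The missing step is exactly the minimality you gestured at in your plan (``make $\lVert A-B\rVert_{\mathrm{HS}}$ as small as possible'') but never verified: by the fact recalled just before the lemma, $\langle A,B\rangle\le\max_{O\in O(d)}\langle A,BO\rangle=\sum_j\cos\theta_j=\langle U,V\rangle$, hence $\lVert A-B\rVert_{\mathrm{HS}}^2=2d-2\langle A,B\rangle\ge 2d-2\sum_j\cos\theta_j=\lVert U-V\rVert_{\mathrm{HS}}^2$, i.e.\ the principal frames minimize the distance over the orbits. With this one line inserted your argument closes and becomes essentially the paper's proof, which avoids the detour through representatives altogether: it writes $\lVert P_A-P_{A'}\rVert_{\mathrm{HS}}^2=2\sum_j(1-\cos^2\theta_j)\le 4\sum_j(1-\cos\theta_j)=2\bigl(2d-2\tr(A^TA'')\bigr)\le 2\bigl(2d-2\tr(A^TA')\bigr)=2\lVert A-A'\rVert_{\mathrm{HS}}^2$, where $A''$ maximizes $\tr(A^TA''')$ over the orbit; the inequality $\tr(A^TA'')\ge\tr(A^TA')$ is precisely the point your proposal omits. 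The remainder of your computation (the identity $\lVert P_A-P_B\rVert_{\mathrm{HS}}^2=2\sum_j\sin^2\theta_j$ and the elementary bound $\sin^2\theta\le 2(1-\cos\theta)$) matches the paper.
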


\begin{proof}
Using principle angles and the notation introduced above, we have
\begin{align*}
\lVert P_A-P_{A'} \rVert_\mathrm{HS}^2 &=  2d - 2 \tr (P_A P_{A'}) = 2d - 2\sum_{j=1}^d \cos^2\theta_j=2\sum_{j=1}^d(1-\cos^2\theta_j)\\
&\le 2\sum_{j=1}^d 2(1- \cos\theta_j) = 2(2d - 2 \tr(A^TA''))\\
&\le 2(2d - 2 \tr(A^TA')) = 2||A-A'||_\mathrm{HS}^2,
\end{align*}
where in the first inequality we used that $(1-x^2)/(1-x) \le 2$ for any $x \in [0,1]$.
\end{proof}

One may wonder whether the map $P_A$ might even be $1$-Lipschitz. However, simple examples are sufficient to show that this cannot be true (e.\,g., consider $d=1$ and the vectors $A = (1,0, \ldots, 0)^T$ and $A' = (1/\sqrt{n}, \ldots, 1/\sqrt{n})^T$).

The core of our arguments is a logarithmic Sobolev inequality for Stiefel and Grassmann manifolds. Even if we are not aware of a source where log-Sobolev inequalities for Stiefel and Grassmann manifolds are rigorously formulated, they may easily be derived by a simple projection argument. We emphasize that these inequalities are designed for the representations of $W_{n,d}$ and $G_{n,d}$ we use all over this paper.

\begin{proposition}\label{LSU}
\begin{enumerate}
\item For any $d < n$, $W_{n,d}$ satisfies a logarithmic Sobolev inequality with constant $4/(n-2)$, i.\,e.\ for any $f \colon W_{n,d} \to \mathbb{R}$ sufficiently smooth,
\[
\mathrm{Ent}_{\mu_{n,d}} (f^2) \le \frac{8}{n-2} \int_{W_{n,d}} |\nabla_W f|^2 d\mu_{n,d}.
\]
\item For any $d < n$, $G_{n,d}$ satisfies a logarithmic Sobolev inequality with constant $8/(n-2)$, i.\,e.\ for any $f \colon G_{n,d} \to \mathbb{R}$ sufficiently smooth,
\[
\mathrm{Ent}_{\nu_{n,d}} (f^2) \le \frac{16}{n-2} \int_{G_{n,d}} |\nabla_G f|^2 d\nu_{n,d}.
\]
\end{enumerate}
\end{proposition}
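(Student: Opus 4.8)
The plan is to obtain both log-Sobolev inequalities from a single source: the classical log-Sobolev inequality for the uniform measure on the orthogonal group $O(n)$ (equivalently $SO(n)$), together with a projection/contraction argument that transfers it to the quotient-type representations used here. Recall that $O(n) = W_{n,n}$ carries its bi-invariant Riemannian metric, and that the uniform (Haar) measure on $O(n)$ satisfies a log-Sobolev inequality with a constant of order $1/n$; more precisely, the Bakry--\'Emery curvature bound $\mathrm{Ric} \ge \frac{n-2}{4} g$ on $SO(n)$ (with the metric induced by the Hilbert--Schmidt norm on $\mathbb{R}^{n\times n}$, so that $|N|^2 = \mathrm{tr}(N^TN)$) yields, via the Bakry--\'Emery criterion, $\mathrm{Ent}(f^2) \le \frac{8}{n-2}\int |\nabla f|^2$ on $SO(n)$. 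This is the fact I would cite (e.g.\ from the standard references on concentration on the classical compact groups) as the starting point.

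The first step is then to realize $\mu_{n,d}$ as a pushforward of Haar measure on $O(n)$. Fix a reference point $A_0 \in W_{n,d}$ (say the first $d$ standard basis columns) and let $\Phi\colon O(n) \to W_{n,d}$, $\Phi(O) = O A_0$, i.e.\ $\Phi(O)$ picks out the first $d$ columns of $O$. By invariance, $\Phi$ pushes Haar measure on $O(n)$ forward to $\mu_{n,d}$. The key quantitative point is that $\Phi$ is $1$-Lipschitz from $(O(n), |\cdot|_{\mathrm{HS}})$ to $(W_{n,d}, |\cdot|_{\mathrm{HS}})$: indeed $|OA_0 - O'A_0|_{\mathrm{HS}} = |(O-O')A_0|_{\mathrm{HS}} \le |O - O'|_{\mathrm{HS}}$ since $A_0$ has orthonormal columns. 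Combined with the fact that for a smooth $f$ on $W_{n,d}$ the intrinsic gradient $\nabla_W f$ computes the generalized modulus of the gradient \eqref{genmod} with respect to exactly this Hilbert--Schmidt metric, the chain rule gives $|\nabla^*(f\circ\Phi)(O)| \le |\nabla_W f(\Phi(O))|$ pointwise. The standard projection principle for log-Sobolev inequalities (if $\nu = \Phi_\#\mu$ and $|\nabla^*(g\circ\Phi)| \le |\nabla^* g|\circ\Phi$, then a log-Sobolev inequality for $\mu$ with constant $c$ passes to one for $\nu$ with the same constant) then yields part (1): applying it with $g = f$ and using $\mathrm{Ent}_{\mu_{n,d}}(f^2) = \mathrm{Ent}_{\mathrm{Haar}}((f\circ\Phi)^2)$ and $\int_{O(n)} |\nabla^*(f\circ\Phi)|^2 \le \int_{W_{n,d}} |\nabla_W f|^2\,d\mu_{n,d}$ gives the constant $8/(n-2)$ claimed.

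For part (2), I would compose one more contraction. Since $\nu_{n,d} = (\pist)_\# \mu_{n,d}$ and $\pist\colon W_{n,d}\to G_{n,d}$ is $\sqrt{2}$-Lipschitz by Lemma \ref{pistLip}, the same projection principle applies: for smooth $f$ on $G_{n,d}$ we get $|\nabla^*(f\circ\pist)(A)| \le \sqrt{2}\,|\nabla_G f(\pist(A))|$, hence $\int_{W_{n,d}} |\nabla_W(f\circ\pist)|^2\,d\mu_{n,d} \le 2\int_{G_{n,d}} |\nabla_G f|^2\,d\nu_{n,d}$, while entropy is preserved under pushforward. Feeding this into part (1) turns the constant $8/(n-2)$ into $16/(n-2)$, which is exactly the asserted Grassmann constant.

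The main obstacle is not the projection bookkeeping — that is routine once the Lipschitz estimates are in hand — but pinning down the base inequality on $O(n)$ with the \emph{precise} constant $4/(n-2)$ in a form compatible with the Hilbert--Schmidt normalization used throughout the paper. One must be careful that the Ricci curvature lower bound, and hence the log-Sobolev constant, is stated for the metric induced by $|N|^2 = \mathrm{tr}(N^TN)$ rather than some rescaled variant; a factor-of-two slip here propagates into all of Theorems \ref{Conc2ndOr}--\ref{ConckthOrGr}. I would therefore make the normalization explicit (e.g.\ by recording that $SO(n)$ with this metric has $\mathrm{Ric} = \frac{n-2}{4}g$ and invoking Bakry--\'Emery) and then let the two contraction steps above do the rest.
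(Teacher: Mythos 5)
Your proposal is correct and takes essentially the same route as the paper: the paper also starts from the log-Sobolev inequality on $SO(n)$ with constant $4/(n-2)$ (citing Meckes, Theorem 5.16, rather than rederiving it via Bakry--\'Emery), pushes it forward to $W_{n,d}$ via the $1$-Lipschitz map that extracts the first $d$ columns (using $W_{n,d}\cong SO(n)/SO(n-d)$ for $d<n$), and then to $G_{n,d}$ via the $\sqrt{2}$-Lipschitz map $\pi_{\mathrm{St}}$ of Lemma \ref{pistLip}, doubling the constant. The only point to tidy up is your opening reference to $O(n)$: the log-Sobolev inequality fails on the disconnected group $O(n)$, so the argument must run through $SO(n)$ throughout, which is exactly where the hypothesis $d<n$ is used.
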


Let us briefly mention that for $d=1$ (i.\,e.\ the sphere), the optimal Sobolev constant is known to be $1/(n-1)$ as shown in \cite{MW82}. In particular, even if Proposition \ref{LSU} is not fully accurate, the behavior of the Sobolev constant for large values of $n$ agrees with the optimal result in case of the sphere. Moreover, note that if $d=n$, $W_{n,n} = O(n)$ has two connected components, which in particular implies that a log-Sobolev inequality cannot hold.

\begin{proof}[Proof of Proposition \ref{LSU}]
First recall that if $d < n$, we have
\[
W_{n,d} \cong SO(n)/SO(n-d).
\]
Indeed, identifying any matrix in $SO(n)$ with its first $n-1$ columns $e_1, \ldots, e_{n-1}$, this follows readily using the projection map $\varphi \colon SO(n) \to W_{n,d}$ which is given by $\varphi(e_1, \ldots, e_{n-1}) := (e_1, \ldots, e_d)$. By \cite[Theorem 5.16]{Me19}, the special orthogonal group $SO(n)$ equipped with the uniform probability measure and Hilbert--Schmidt metric satisfies a log-Sobolev inequality with constant $4/(n-2)$. Noting that the map $\varphi$ is $1$-Lipschitz, (1) therefore follows immediately.

To see (2), it remains to note that $(G_{n,d}, \nu_{n,d})$ is the the pushforward of $(W_{n,d},\mu_{n,d})$ under $\pist$, which is $\sqrt{2}$-Lipschitz according to Lemma \ref{pistLip} (so that the Sobolev constant is doubled).
\end{proof}

In the sequel, we will give the proofs of the concentration bound for function on Stiefel manifolds, i.\,e., Theorem \ref{Conc2ndOr} and Theorem \ref{ConckthOr}. As already shown in \cite{AS94}, logarithmic Sobolev inequalities imply certain $L^p$ norm inequalities. In the situation under consideration, for any locally Lipschitz function $g \colon W_{n,d} \to \mathbb{R}$ and for any $p \ge 2$,
\begin{equation}\label{LpNorms}
    \lVert g \rVert_p^2 \le \lVert g \rVert_2^2 + 4(n-2)^{-1} (p-2)\lVert \nabla_W g \rVert_p^2.
\end{equation}
Using Proposition \ref{projSt}, we moreover have
\begin{equation}\label{LpNormsEucl}
    \lVert g \rVert_p^2 \le \lVert g \rVert_2^2 + 4(n-2)^{-1} (p-2)\lVert \nabla g \rVert_p^2.
\end{equation}
Let us also recall that since $\mu_{n,d}$ satisfies a log-Sobolev inequality, it also satisfies a Poincar\'{e} inequality with the same constant, i.\,e.
\begin{equation}\label{PoincareIneq}
    \mathrm{Var}_{\mu_{n,d}}(f) \equiv \int_{W_{n,d}} (f-\mu_{n,d}(f))^2 d\mu_{n,d} \le \frac{4}{n-2} \int_{W_{n,d}} |\nabla_W f|^2 d\mu_{n,d}
\end{equation}
for all $f \colon W_{n,d} \to \mathbb{R}$ sufficiently smooth.

We now first prove Theorem \ref{ConckthOr}. Note that its proof follow the lines of the proofs established in \cite{BGS19,GSS21b}.

\begin{proof}[Proof of Theorem \ref{ConckthOr}]
Applying \eqref{LpNormsEucl} to $g = \lVert f^{(\ell)} \rVert_\mathrm{op}$ and recalling that for Euclidean derivatives, $|\nabla\lVert f^{(\ell)} \rVert_\mathrm{op}| \le \lVert f^{(\ell + 1)} \rVert_\mathrm{op}$ (cf. \cite[Lemma 4.1]{BGS19}), we obtain that
\[
\lVert f^{(\ell)} \rVert_{\mathrm{op},p}^2 \le \lVert f^{(\ell)} \rVert_{\mathrm{op},2}^2 + 4(n-2)^{-1} (p-2)\lVert \nabla f^{(\ell + 1)} \rVert_{\mathrm{op},p}^2
\]
for any $\ell = 1, \ldots, k-1$, which yields
\begin{align}\label{rec}
\lVert f \rVert_p^2 &\le \lVert f \rVert_2^2 + \sum_{\ell = 1}^{k-1} \Big(\frac{4(p-2)}{n-2}\Big)^\ell \lVert f^{(\ell)} \rVert_{\mathrm{op},2}^2 + \Big(\frac{4(p-2)}{n-2}\Big)^k \lVert f^{(k)} \rVert_{\mathrm{op},\infty}^2\notag\\
&\le \frac{4}{n-2} \lVert f^{(1)} \rVert_2^2 + \sum_{\ell = 1}^{k-1} \Big(\frac{4(p-2)}{n-2}\Big)^\ell \lVert f^{(\ell)} \rVert_{\mathrm{op},2}^2 + \Big(\frac{4(p-2)}{n-2}\Big)^k \lVert f^{(k)} \rVert_{\mathrm{op},\infty}^2\notag\\
&\le \sum_{\ell = 1}^{k-1} \Big(\frac{4(p-1)}{n-2}\Big)^\ell \lVert f^{(\ell)} \rVert_{\mathrm{op},2}^2 + \Big(\frac{4(p-1)}{n-2}\Big)^k \lVert f^{(k)} \rVert_{\mathrm{op},\infty}^2
\end{align}
for any $p \ge 2$, where the second step follows by Poincar\'{e} inequality.

To see (1), plugging in the assumptions we arrive at
\[
\lVert f \rVert_p^2 \le \Big(\frac{4}{n-2}\Big)^k \sum_{\ell=1}^k p^\ell \le \frac{1}{1 - p^{-1}} \Big(\frac{4p}{n-2}\Big)^k \le \Big(\frac{8p}{n-2}\Big)^k
\]
and therefore $\lVert f \rVert_p \le (8p/(n-2))^{k/2}$ for any $p \ge 2$. If $p \le 2$, we may estimate $\lVert f \rVert_p \le \lVert f \rVert_2 \le (16/(n-2))^{k/2}$. In particular, we obtain that for any $m \ge 1$,
\[
\lVert |f|^{2/k} \rVert_m = \lVert f \rVert_{2m/k}^{2/k} \le \gamma m
\]
with $\gamma = 16/(n-2)$. In particular, by an easy calculation (cf. \cite[Eq.\ (2.17)]{BGS19}, this yields that $\int_{W_{n,d}} \exp(c'|f|) d\mu_{n,d} \le 2$ for $c' = 1/(2\gamma e)$, i.\,e.\ (1).

To show (2), taking roots in \eqref{rec} yields
\[
\lVert f \rVert_p \le \sum_{\ell = 1}^{k-1} \Big(\frac{4(p-1)}{n-2}\Big)^{\ell/2} \lVert f^{(\ell)} \rVert_{\mathrm{op},2} + \Big(\frac{4(p-1)}{n-2}\Big)^{k/2} \lVert f^{(k)} \rVert_{\mathrm{op},\infty}.
\]
Now, \cite[Proposition 4]{GSS21b} yields
\[
\mu_{n,d}(|f-\mu_{n,d}(f)| \ge t) \le 2 \exp\Big(-\frac{1}{C}\min\Big(\min_{\ell = 1, \ldots, k-1}\frac{t^{2/\ell}}{\lVert f^{(\ell)} \rVert_{\mathrm{op},2}^{2/\ell}}, \frac{t^{2/k}}{\lVert f^{(k)} \rVert_{\mathrm{op},\infty}^{2/k}}\Big)\Big),
\]
where we may choose
\[
C = \frac{4e^2k^2}{\log(2)(n-2)},
\]
which completes the proof.
\end{proof}

For the proof of Theorem \ref{Conc2ndOr}, we moreover need the following lemma which relates the $L^2$ norms of the derivatives of first and second order under a centering assumption.

\begin{lemma}\label{1to2}
Let $f \colon W_{n,d} \to \mathbb{R}$ be a $\mathcal{C}^2$-smooth function which has centered first order intrinsic derivatives, i.\,e.\ $\mu_{n,d}(\nabla_W f) = 0$. Then,
\[
\int_{W_{n,d}} |\nabla_W f|^2 d\mu_{n,d} \le \frac{8}{n-2-8d} \int_{W_{n,d}} \lVert f''_W \rVert_\mathrm{HS}^2 d\mu_{n,d}.
\]
\end{lemma}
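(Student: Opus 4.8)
The plan is to reduce the claim to the Poincar\'e inequality \eqref{PoincareIneq} applied \emph{componentwise} to the Stiefel gradient $\nabla_W f$, combined with the identity for $f''_W$ from Proposition \ref{Prop9.1.3St}. For $i=1,\dots,n$ and $j=1,\dots,d$ let $e_{ij}\in\mathbb{R}^{n\times d}$ be the matrix with a single $1$ in position $(i,j)$, and set $g_{ij}(A):=\langle\nabla_W f(A),e_{ij}\rangle=(\nabla_W f(A))_{ij}$, so that $|\nabla_W f(A)|^2=\sum_{i,j}g_{ij}(A)^2$. Since $f$ is $\mathcal{C}^2$, each $g_{ij}$ is $\mathcal{C}^1$, and by assumption $\mu_{n,d}(g_{ij})=0$. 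Applying \eqref{PoincareIneq} to each $g_{ij}$ and summing yields
\[
\int_{W_{n,d}}|\nabla_W f|^2\,d\mu_{n,d}\le\frac{4}{n-2}\int_{W_{n,d}}\sum_{i,j}|\nabla_W g_{ij}|^2\,d\mu_{n,d},
\]
so it remains to bound $\sum_{i,j}|\nabla_W g_{ij}|^2$ by a multiple of $\lVert f''_W\rVert_\mathrm{HS}^2$ plus a \emph{small} multiple of $|\nabla_W f|^2$, which can then be absorbed into the left-hand side.

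The second ingredient is Proposition \ref{Prop9.1.3St} applied with the constant matrix $V=e_{ij}$: since $\langle\nabla_W f(A),e_{ij}\rangle=g_{ij}(A)$, it gives
\[
\nabla_W g_{ij}(A)=f''_W(A)e_{ij}-P_A\big(\nabla_W f(A)(A\cs e_{ij})\big).
\]
Using $|a-b|^2\le 2|a|^2+2|b|^2$ it then suffices to control the two sums $\sum_{i,j}|f''_W(A)e_{ij}|^2$ and $\sum_{i,j}|P_A(\nabla_W f(A)(A\cs e_{ij}))|^2$ separately. The first is immediate: since $\{\mathrm{vec}(e_{ij})\}$ is the standard orthonormal basis of $\mathbb{R}^{nd}$, one has $\sum_{i,j}|f''_W(A)e_{ij}|^2=\lVert f''_W(A)\rVert_\mathrm{HS}^2$.

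The correction term is the only place where a short computation is needed, and where the dimension $d$ enters. By contractivity of the orthogonal projection $P_A=\pi_A$ and cyclicity of the trace (using that $A\cs e_{ij}$ is symmetric),
\[
\sum_{i,j}|P_A(\nabla_W f(A)(A\cs e_{ij}))|^2\le\sum_{i,j}|\nabla_W f(A)(A\cs e_{ij})|^2=\mathrm{tr}\Big(\nabla_W f(A)^T\nabla_W f(A)\sum_{i,j}(A\cs e_{ij})^2\Big).
\]
Writing $a_i\in\mathbb{R}^d$ for the $i$-th row of $A$ and $\epsilon_j$ for the $j$-th standard unit vector of $\mathbb{R}^d$, one checks $A^Te_{ij}=a_i\epsilon_j^T$, hence $A\cs e_{ij}=\tfrac12(a_i\epsilon_j^T+\epsilon_j a_i^T)$; expanding the square and summing, the relations $\sum_i a_ia_i^T=A^TA=I_d$, $\sum_i A_{ij}a_i=\epsilon_j$ and $\sum_i|a_i|^2=|A|^2=d$ give $\sum_{i,j}(A\cs e_{ij})^2=\tfrac{d+1}{2}I_d$, so the correction sum is at most $\tfrac{d+1}{2}|\nabla_W f(A)|^2\le d\,|\nabla_W f(A)|^2$. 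Putting the pieces together yields
\[
\int|\nabla_W f|^2\,d\mu_{n,d}\le\frac{4}{n-2}\Big(2\int\lVert f''_W\rVert_\mathrm{HS}^2\,d\mu_{n,d}+2d\int|\nabla_W f|^2\,d\mu_{n,d}\Big),
\]
and rearranging -- legitimate precisely when $n-2-8d>0$, which is the only regime in which the asserted inequality is non-vacuous -- gives the claim. The only mild obstacle is ensuring the correction estimate is \emph{linear} in $d$ rather than quadratic, which is exactly what the identity $\sum_{i,j}(A\cs e_{ij})^2=\tfrac{d+1}{2}I_d$ delivers; everything else is routine.
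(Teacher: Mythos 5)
Your proof is correct and follows essentially the same route as the paper: componentwise Poincar\'e inequality for $\langle\nabla_W f,e_{ij}\rangle$, the identity of Proposition \ref{Prop9.1.3St} with $V=e_{ij}$, the splitting $|a-b|^2\le 2|a|^2+2|b|^2$, and absorption of the gradient term. The only difference is cosmetic: where the paper disposes of the correction term with a terse contractivity bound, you verify it explicitly via $\sum_{i,j}(A\cs e_{ij})^2=\tfrac{d+1}{2}I_d$, which indeed yields the required summed estimate $\sum_{i,j}|P_A(\nabla_W f(A)(A\cs e_{ij}))|^2\le d\,|\nabla_W f(A)|^2$.
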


\begin{proof}
Write
\[
|\nabla_W f(A)|^2 = \sum_{i,j} \langle \nabla_W f(A), e_{ij} \rangle^2,
\]
where $i \le n$, $j \le d$ and $e_{ij}$ is the ``indicator matrix'' whose $(i,j)$-th entry equals $1$ while all other entries are zero. By assumption, the functions $A \mapsto \langle \nabla_W f(A), e_{ij} \rangle$ are centered. Therefore, applying the Poincar\'{e} inequality \eqref{PoincareIneq} yields
\[
\frac{n-2}{4} \int_{W_{n,d}} \langle \nabla_W f(A), e_{ij} \rangle^2 d \mu_{n,d} \le \int_{W_{n,d}} |\nabla_W (\nabla_W f(A), e_{ij} \rangle)|^2 d\mu_{n,d}.
\]
By Proposition \ref{Prop9.1.3St}, we have
\[
\nabla_W (\nabla_W f(A), e_{ij} \rangle) = f''_W(A)e_{ij} - P_A(\nabla_Wf(A)(A\cs e_{ij})),
\]
so that we may estimate
\begin{align*}
    &\quad \int_{W_{n,d}} |\nabla_W (\nabla_W f(A), e_{ij} \rangle)|^2 d\mu_{n,d}\\
    &\le 2\int_{W_{n,d}} |f''_W(A)e_{ij}|^2 d\mu_{n,d} + 2\int_{W_{n,d}} |P_A(\nabla_Wf(A)(A\cs e_{ij}))|^2 d\mu_{n,d}.
\end{align*}
By contractivity and using $|A_{ij}|^2 \le 1$,
\[
|P_A(\nabla_Wf(A)(A\cs e_{ij}))|^2 \le |(\nabla_Wf(A)(A\cs e_{ij})|^2 \le d |\nabla_Wf(A)|^2.
\]
Plugging in, summing up and noting that
\[
\sum_{i,j} |f_W''(A)e_{ij}|^2 = \lVert f_W''(A) \rVert_\mathrm{HS}^2
\]
(recall that $i \le n$, $j \le d$ and $f''_W(A)$ is an $nd \times nd$ matrix) completes the proof.
\end{proof}

\begin{proof}[Proof of Theorem \ref{Conc2ndOr}]
Obviously, we may argue as in the proof of Theorem \ref{ConckthOr} with \eqref{LpNormsEucl} replaced by \eqref{LpNorms} and \cite[Lemma 4.1]{BGS19} replaced by Proposition \ref{Prop9.2.1St} (thus involving intrinsic derivatives) to obtain
\[
\lVert f \rVert_p^2 \le \frac{4(p-1)}{n-2}\lVert \nabla_W f \rVert_2^2 + \Big(\frac{4(p-1)}{n-2}\Big)^2\lVert f''_W \rVert_{\mathrm{op},\infty}^2,
\]
which is the ``intrinsic analogue'' of \eqref{rec} for $k=2$. Therefore, to see (1) and (2), we may just copy the proof of Theorem \ref{ConckthOr}. Moreover, (3) follows by applying Lemma \ref{1to2}.
\end{proof}

Finally, to address Theorem \ref{Conc2ndOrGr} and Theorem \ref{ConckthOrGr}, note that for any locally Lipschitz function $g \colon G_{n,d} \to \mathbb{R}$ and for any $p \ge 2$,
\[
    \lVert g \rVert_p^2 \le \lVert g \rVert_2^2 + 8(n-2)^{-1} (p-2)\lVert \nabla_WGg \rVert_p^2,
\]
which is an analogue of \eqref{LpNorms}. From here, we may essentially copy the proofs of Theorem \ref{Conc2ndOr} and Theorem \ref{ConckthOr} with the obvious adaptions (which, in particular, lead to slightly different constants).

\section{Concentration results for polynomial chaos}\label{Sec:PolCh}

Arguably the most classical object in the study of higher order concentration is polynomial chaos. In the context of Stiefel manifolds, by a polynomial chaos of order $k$ we refer to functionals of the form
\[
f_k(A) := \sum_{i_1,j_1=1}^{n,d} \cdots \sum_{i_k,j_k=1}^{n,d} c_{i_1j_1, \ldots, i_kj_k} A_{i_1j_1} \cdots A_{i_kj_k},
\]
where $c_{i_1j_1, \ldots, i_kj_k}$ are real-valued coefficients. For $k=1,2$, these are linear or quadratic forms, respectively. Functionals of this type are for instance related to angles and distances between such random subspaces like $\det(A^TC)$, where $A \in W_{n,d}$ and $C \in \mathbb{R}^{n \times d}$ is a matrix of coefficients which might be random with, say, independent entries. (In this case, $\det(A^TC)$ is the scalar product of the exterior $d$-forms induced by $A$ and $C$ representing subspaces of dimension $d$.) In this section, we will mainly be interested in the case of $k=2$ but also provide some basic comments on other values of $k$.

In particular, information about the (mixed) moments of the Stiefel matrix entries $A_{ij}$ of order up to $k$ is needed. To this end, note that the $A_{ij}$ are identically distributed uncorrelated random variables with mean zero and variance $1/n$. Indeed, using the invariance under orthogonal transformations we clearly have $A_{ij} \sim A_{k\ell}$ and also, multiplying suitable columns or rows by $-1$,
\[
\int_{W_{n,d}} A_{ij}A_{k\ell} d \mu_{n,d} = - \int_{W_{n,d}} A_{ij}A_{k\ell} d\mu_{n,d} = 0
\]
for any $(i,j) \ne (k,\ell)$. Moreover, the $A_{ij}$ are centered since $A \sim -A$, and we have
\[
d = \sum_{k,\ell=1}^{n,d} A_{k\ell}^2 = \int_{W_{n,d}} \sum_{k,\ell=1}^{n,d} A_{k\ell}^2 d \mu_{n,d} = nd \int_{W_{n,d}} A_{ij}^2 d \mu_{n,d}
\]
for any $(i,j)$. Some of these relations may also deduced from the fact that the marginal distribution of each column of $A$ is the uniform distribution on the sphere.

These results also imply that the entries of a Grassmannian (projection) matrix $P$ satisfy
\[
\int_{G_{n,d}} P_{ij} d\nu_{n,d} = \begin{cases} d/n, & i=j\\ 0, & i \ne j \end{cases}.
\]
To see this, it suffices to note that writing $P = P_A = AA^T$, we have $P_{ij} = \sum_{k=1}^d A_{ik}A_{jk}$.

\subsection{First order results} To start, let us briefly study linear forms on $W_{n,d}$, which we may rewrite as
\[
f_1(A) = \langle V, A \rangle
\]
for some matrix of coefficients $V \in \mathbb{R}^{n \times d}$. Since $\nabla_W f_1(A) = P_A(V)$ and the entries of $A$ are centered, \eqref{LipschitzAbl} immediately yields
\begin{equation}\label{LinF}
\mu_{n,d}(|f_1| \ge t) \le 2 \exp\Big(-\frac{(n-1)t^2}{8\lVert P_A(V)\rVert_{\mathrm{HS}, \infty}^2}\Big).
\end{equation}
One may also choose the vector of coefficients $V$ at random according to some distribution with independent entries or dependent entries with higher order uncorrelatedness conditions. In this case, results like \eqref{LinF} hold conditionally given $V$, for instance. Functionals of this type may be regarded as natural generalizations of weighted sums $\langle X,\theta\rangle = \theta_1 X_1 + \dots + \theta_n X_n$ with uniformly distributed weights $\theta =(\theta_1, \ldots, \theta_n) \in S^{n-1}$ and some random vector $X$ with higher order uncorrelated entries (cf.\ \cite{BCG18,BCG20}), replacing the sphere by the Stiefel manifold.

\subsection{Second order results} The central object of this section are quadratic forms on $W_{n,d}$. Recall that a classical (second order) concentration result for quadratic forms in independent subgaussian random variables is the famous Hanson--Wright inequality, cf.\ \cite{HW71}. It states that for a random vector $X = (X_1, \ldots, X_n)$ with independent centered components with variance $1$,
\[
\mathbb{P}(|X^TMX - \mathrm{tr}(M)| \ge t) \le 2 \exp\Big(- \frac{1}{C} \min\Big(\frac{t^2}{\lVert M \rVert_\mathrm{HS}^2}, \frac{t}{\lVert M \rVert_\mathrm{op}}\Big)\Big)
\]
for any $t \ge 0$, where the constant $C > 0$ depends on the subgaussian norm of the coordinates $X_i$.

We derive analogues of the Hanson--Wright inequality for Stiefel manifolds, i.\,e., we consider
\begin{equation}\label{notquf}
f_2(A) = \mathrm{vec}(A)^TM\mathrm{vec}(A) \equiv A^TMA
\end{equation}
for some matrix $M \in \mathbb{R}^{nd \times nd}$ which, for simplicity, we assume to be symmetric. Note that the informal notation $A^TMA$ extends \eqref{shn}. For instance, to go back to the example sketched at the beginning of this section, we may consider $\det(A^TC)$, where for simplicity we assume $C \in \mathbb{R}^{n \times d}$ to be fixed, in the case of $d=2$. Here,
\[
\det(A^TC) = \Big(\sum_{i=1}^n a_{i1}c_{i1}\Big)\Big(\sum_{j=1}^n a_{i2}c_{i2}\Big) - \Big(\sum_{i=1}^n a_{i1}c_{i2}\Big)\Big(\sum_{j=1}^n a_{i2}c_{i1}\Big) = A^TMA
\]
for the matrix $M$ with entries $M_{ij,kl} = (c_{ij}c_{kl} - c_{il}c_{kj})/2$. More advanced examples will be discussed in a further subsection.

In the next theorem, we present three different Hanson--Wright type inequalities for Stiefel manifolds. Write
\[
U := \Big(\sum_{k,l}M_{ij,kl}A_{kl}\Big)_{i,j},\qquad B := M - (A \cs U) \otimes I_n
\]
for $i, k \le n$, $j, l \le d$, noting that $\nabla_W f_2(A) = 2P_AU$ and $f''_{2W}(A) = 2P_ABP_A$.

\begin{satz}[Hanson--Wright type inequalities for Stiefel manifolds]\label{HWI}
\begin{enumerate}
    \item For any $t \ge 0$, we have
    \[
    \mu_{n,d}(|f_2 - \mathrm{tr}(M)/n| \ge t) \le 2 \exp\Big(-\frac{1}{C}\min\Big(\frac{(n-2)^2t^2}{\lVert M \rVert_\mathrm{HS}^2}, \frac{(n-2)t}{\lVert M \rVert_\mathrm{op}}\Big)\Big),
    \]
    where we may choose $C = 128 e^2/\log(2)$.
    \item For any $t \ge 0$, we have
    \[
    \mu_{n,d}(|f_2 - \mathrm{tr}(M)/n| \ge t) \le 2 \exp\Big(-\frac{1}{C}\min\Big(\frac{(n-2)t^2}{\lVert P_AU \rVert_{\mathrm{HS},2}^2}, \frac{(n-2)t}{\lVert P_ABP_A \rVert_{\mathrm{op}, \infty}}\Big)\Big),
    \]
    where we may choose $C = 32 e^2/\log(2)$.
    \item For any $t \ge 0$, we have
    \[
    \mu_{n,d}(|f_2 - \mathrm{tr}(M)/n| \ge t) \le 2 \exp\Big(-\frac{1}{C}\min\Big(\frac{(n-2-8d)^2t^2}{\lVert P_ABP_A \rVert_{\mathrm{HS},2}^2}, \frac{(n-2)t}{\lVert P_ABP_A \rVert_{\mathrm{op}, \infty}}\Big)\Big),
    \]
    where we may choose $C = 256 e^2/\log(2)$.
\end{enumerate}
\end{satz}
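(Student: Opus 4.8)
The plan is to apply Theorem~\ref{ConckthOr} (with $k=2$) and Theorem~\ref{Conc2ndOr} to the centered function $f := f_2 - \tr(M)/n$. The preliminary work is to record the mean, the Euclidean derivatives and the intrinsic derivatives of $f_2$, together with the relevant norms. Since the entries $A_{ij}$ are uncorrelated with $\int_{W_{n,d}}A_{ij}^2\,d\mu_{n,d}=1/n$, one has $\mu_{n,d}(f_2)=\sum_{ij,kl}M_{ij,kl}\int A_{ij}A_{kl}\,d\mu_{n,d}=n^{-1}\tr(M)$, so $f$ is centered. As a polynomial, $f_2$ is globally smooth, with Euclidean derivatives $f_2^{(1)}(A)=2U$ (linear in $A$, so $\lVert f_2^{(1)}(A)\rVert_\mathrm{op}=2|U|$) and $f_2^{(2)}(A)=2M$ (constant); using the covariance structure of the $A_{ij}$ again,
\[
\lVert f_2^{(1)}\rVert_{\mathrm{op},2}^2=4\int_{W_{n,d}}|M\vec(A)|^2\,d\mu_{n,d}=\frac{4}{n}\tr(M^2)=\frac{4}{n}\lVert M\rVert_\mathrm{HS}^2,\qquad \lVert f_2^{(2)}\rVert_{\mathrm{op},\infty}=2\lVert M\rVert_\mathrm{op}.
\]
On the intrinsic side, Propositions~\ref{projSt} and~\ref{2ndOrDerSt} applied to $f_2$ give, as already recorded before the theorem, $\nabla_W f_2(A)=2P_AU$ and $f''_{2W}(A)=2P_ABP_A$ with $B=M-(A\cs U)\otimes I_n$, whence $\lVert\nabla_W f_2\rVert_{\mathrm{HS},2}^2=4\lVert P_AU\rVert_{\mathrm{HS},2}^2$ and $\lVert f''_{2W}\rVert_{\mathrm{op},\infty}=2\lVert P_ABP_A\rVert_{\mathrm{op},\infty}$.

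For part (1), I would insert the above Euclidean data into Theorem~\ref{ConckthOr}(2) with $k=2$; its right-hand side becomes
\[
2\exp\Big(-\frac{(n-2)\log 2}{16e^2}\,\min\Big(\frac{n t^2}{4\lVert M\rVert_\mathrm{HS}^2},\,\frac{t}{2\lVert M\rVert_\mathrm{op}}\Big)\Big),
\]
and estimating $n(n-2)\ge (n-2)^2$ in the quadratic term and absorbing the numerical factors into $C=128e^2/\log 2$ gives the stated inequality. Part (2) is obtained identically from Theorem~\ref{Conc2ndOr}(2), now with $\lVert\nabla_W f_2\rVert_{\mathrm{HS},2}$ and $\lVert f''_{2W}\rVert_{\mathrm{op},\infty}$ in the roles of the two norms; substituting $4\lVert P_AU\rVert_{\mathrm{HS},2}^2$ and $2\lVert P_ABP_A\rVert_{\mathrm{op},\infty}$ and collecting constants yields the claim with $C=32e^2/\log 2$.

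For part (3) one first needs the centering hypothesis $\mu_{n,d}(\nabla_W f_2)=0$ of Theorem~\ref{Conc2ndOr}(3). Writing $\nabla_W f_2(A)=2\big(U-A(A\cs U)\big)$ with $U=\mathrm{mat}(M\vec(A))$ linear in $A$, every summand ($U$ as well as $A(A\cs U)=\tfrac12(AA^TU+AU^TA)$) is odd in $A$, i.e.\ changes sign under $A\mapsto -A$; since $A\sim -A$ under $\mu_{n,d}$, all of them integrate to $0$. Hence Theorem~\ref{Conc2ndOr}(3) applies, and replacing $\lVert\nabla_W f_2\rVert_{\mathrm{HS},2}$ by $\sqrt{8/(n-2-8d)}\,\lVert f''_{2W}\rVert_{\mathrm{HS},2}=2\sqrt{8/(n-2-8d)}\,\lVert P_ABP_A\rVert_{\mathrm{HS},2}$ and using $(n-2)(n-2-8d)\ge(n-2-8d)^2$ gives part (3) with $C=256e^2/\log 2$. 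There is no genuine analytic obstacle here: the substance of the argument is the identification of the intrinsic first and second derivatives of the quadratic form through Propositions~\ref{projSt} and~\ref{2ndOrDerSt} and the parity argument for the centering of $\nabla_W f_2$, while the remaining work is bookkeeping of multiplicative constants.
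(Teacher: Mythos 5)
Your proposal matches the paper's proof: parts (2) and (3) are direct applications of Theorem \ref{Conc2ndOr}(2) and (3) with $\nabla_W f_2(A) = 2P_AU$ and $f''_{2W}(A) = 2P_ABP_A$ (plus the centering check $\mu_{n,d}(\nabla_W f_2)=0$, which you verify by parity and the paper leaves implicit), and part (1) is Theorem \ref{ConckthOr}(2) with $k=2$ applied to the canonical extension of $f_2$. The only (harmless) deviation is in (1): the paper bounds $\lVert \nabla f_2 \rVert_{\mathrm{HS},2}$ via the Poincar\'{e} inequality applied to the centered components of the gradient, whereas you compute $\lVert \nabla f_2 \rVert_{\mathrm{HS},2}^2 = 4\lVert M \rVert_{\mathrm{HS}}^2/n$ exactly from the covariance structure of the entries of $A$, which yields the same $(n-2)^2$ decay with, if anything, slightly better constants.
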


For the notation used in (2) and (3), cf.\ \eqref{NotExpl}. The three different analogues of the classical Hanson--Wright inequality presented in Theorem \ref{HWI} reflect a certain flexibility which can turn out to be useful in applications (in particular, it would be hard to speak of a single canonical Stiefel variant of the Hanson--Wright inequality). Certainly, (1) is the most immediate analogue, involving quantities which can easily be calculated. However, unlike (2) and (3), it does not make use of intrinsic derivatives and can therefore be less accurate.

For instance, in the trivial case $M = I_{nd}$ (thus, $f_2(A) - \mathrm{tr}(M)/n \equiv 0$) (1) does not yield an optimal result, while (2) and (3) do. On the other hand, if $M_{ij,kl} = 1$ for every $i,j,k,l$, (1), (2) and (3) are not substantially different. As compared to (2), (3) is nearer to the classical Hanson--Wright inequality, but this is the cost of an additional $d$-dependence in the subgaussian term, which can be avoided in (2).

For explicit calculations of the $L^2$ norms appearing in (2) and (3), note that we may construct a uniformly distributed Stiefel element via $A=O_n I_{n,d} O_d$, where $O_n$ and $O_d$ are distributed according to the Haar measure on $SO(n)$ and on $SO(d)$, respectively, and $I_{n,d}$ is the element in $W_{n,d}$ whose columns are the first $d$ unit vectors. Then $P_A= A A^T = O_n I_{n,d} I_{n,d}^T O_n^{-1} = (\langle p_d \theta_j,  p_d \theta_k\rangle )_{j,k=1,\dots,n}$ (where $O_n=(\theta_1, \ldots, \theta_n)$ and $p_d$ is the projection to the first $d$ coordinates). Thus rewriting $\lVert P_A U\rVert_{\mathrm{HS},2}^2$  resp.\
$ \lVert P_A B P_A\rVert_{\mathrm{HS},2}^2 $ as linear combinations of products of components of orthogonal matrices, we may now  average them over $O(n)$ and $O(d)$ by applying formulas from the  Weingarten calculus. The only non-vanishing products in this average  are those whose indices are equal subject to a scheme of two pair partitions which will determine  a  so-called Weingarten function which is a rational function of $n$ of order $O(n^{-2})$ or smaller. For explicit formulas covering our cases
 see in particular \cite{CM09}, table  VII.  For an introduction to the underlying theory  we refer to \cite{CMN21} and for more details to e.\,g.\  \cite{CS06,BCS11}.

\begin{proof}[Proof of Theorem \ref{HWI}]
To see (1), we extend $f_2$ to $X \in \mathbb{R}^{n \times d}$ by the canonical choice $f_2(X) = \mathrm{vec}(X)^TM\mathrm{vec}(X)$ and apply Theorem \ref{ConckthOr} (2). Clearly, $f''(A) = 2M$, and in the subgaussian term, we use that by Poincar\'{e} inequality,
\[
\lVert \nabla f \rVert_{\mathrm{HS},2} \le \frac{4}{n-2} \lVert f'' \rVert_{\mathrm{HS},2}
\]
whenever $\nabla f$ has centered components (indeed, the proof is a simplification of the proof of Lemma \ref{1to2}). To see (2) and (3), it remains to apply Theorem \ref{Conc2ndOr} (2) and (3), respectively, using that $\nabla_W f(A) = 2P_AU$ and $f''_W(A) = 2P_ABP_A$.
\end{proof}

\subsection{Higher orders} Let us briefly comment on possible results for polynomial chaos of order $k \ge 3$. In general, similarly as in Theorem \ref{HWI} (1) one may derive concentration bounds for $f_k$ by applying Theorem \ref{ConckthOr} (for order $k$). Here, for $k=3$, by similar arguments as for orders $1$ and $2$ (including symmetry) we may use that $\int_{W_{n,d}} A_{ij}A_{kl} A_{pq} d\mu_{n,d} = 0$ for any choice of the indices. However, already for $k=4$ the calculation of the mixed moments (e.\,g.\ with each index appearing twice) can turn out to be quite involved.

Using the $L^p$ norm inequalities from Section \ref{sec:Sob}, it is also possible to derive results in the style of \cite{AW15} (not only for polynomial chaos but also for general $\mathcal{C}^k$ functions) which yield highly elaborate and accurate tail bounds in terms of a large family of tensor-type norms. We will not pursue this direction in this note.

Note though that all these results will make use of Euclidean derivatives only. In fact, the calculus of intrinsic derivatives gets increasingly involved as the order increases. For functions on the sphere, in \cite{BGS19} higher order concentration results depending on spherical partial derivatives have been derived, which can be thought of as a sort of intermediate object between Euclidean and intrinsic derivatives.

\subsection{Applications} Typical applications of concentration inequalities for Stiefel and Grassmann functionals include bounds for distances between subspaces. If $E,F \subset \mathbb{R}^n$ are two subspaces of dimension $d$, canonical choices for the distance between them are $\lVert P_E - P_F \rVert_\mathrm{HS}$ or $\lVert P_E - P_F \rVert_\mathrm{op}$. In the sequel, we will always choose one subspace at random, while the other one is fixed.

As a start, we consider the following simple example, in which the function $G_{n,d} \ni P \mapsto \lvert P - P_F \rvert \equiv \lVert P - P_F \rVert_\mathrm{HS}$ is studied.

\begin{proposition}\label{exampleGrSb}
Let $F \subset \mathbb{R}^n$ be any fixed $d$-dimensional subspace of $\mathbb{R}^n$, and denote by $P_F$ the corresponding projection matrix. Then, for any $t \ge 0$,
\[
\nu_{n,d}(\lvert \lvert P - P_F \rvert^2 - 2d(1-d/n) \rvert \ge t) \le 2 \exp(-(n-1)t^2/(64d)).
\]
\end{proposition}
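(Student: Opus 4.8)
The plan is to observe that $P \mapsto \lvert P - P_F\rvert^2$ is an \emph{affine} function of $P$, so the statement is a genuine first-order (Lipschitz) concentration result and none of the second-order machinery of Theorem~\ref{Conc2ndOrGr} is needed. First I would expand, using $P^2 = P$, $P_F^2 = P_F$, $\mathrm{tr}(P) = \mathrm{tr}(P_F) = d$ and the symmetry of $P$,
\[
\lvert P - P_F\rvert^2 = \mathrm{tr}\big((P - P_F)^2\big) = 2d - 2\,\mathrm{tr}(PP_F) = 2d - 2\langle P, P_F\rangle ,
\]
so it suffices to control the fluctuations of the linear functional $g(P) := \langle P, P_F\rangle = \sum_{i,j} P_{ij}(P_F)_{ij}$ around its mean.

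Next I would compute that mean. Using the entrywise expectation $\nu_{n,d}(P_{ij}) = (d/n)\delta_{ij}$ recorded earlier in this section, linearity gives $\nu_{n,d}(g) = (d/n)\mathrm{tr}(P_F) = d^2/n$, hence
\[
\nu_{n,d}\big(\lvert P - P_F\rvert^2\big) = 2d - 2d^2/n = 2d(1 - d/n),
\]
which is exactly the centering constant in the statement. For the Lipschitz constant, Cauchy--Schwarz on the Hilbert--Schmidt inner product gives, for $P, P' \in G_{n,d}$,
\[
\big\lvert \lvert P - P_F\rvert^2 - \lvert P' - P_F\rvert^2\big\rvert = 2\lvert \langle P - P', P_F\rangle\rvert \le 2\lVert P_F\rVert_\mathrm{HS}\,\lVert P - P'\rVert_\mathrm{HS} = 2\sqrt d\,\lVert P - P'\rVert_\mathrm{HS},
\]
so $\lvert P - P_F\rvert^2$ is $2\sqrt d$-Lipschitz on $(G_{n,d},\lvert\cdot\rvert)$. (Equivalently, the Euclidean gradient of $g$ is the constant matrix $P_F$, and by Proposition~\ref{projGr} the intrinsic gradient has norm $\le \sqrt d$ at every point.)

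Finally I would apply the Grassmann version of the Lipschitz bound \eqref{Lipschitz}---the one with the constant $8$ replaced by $16$, as noted right after \eqref{LipschitzAbl}---to the centered function $\lvert P - P_F\rvert^2 - 2d(1-d/n)$ and to its negative, obtaining
\[
\nu_{n,d}\big(\lvert \lvert P - P_F\rvert^2 - 2d(1-d/n)\rvert \ge t\big) \le 2\exp\Big(-\frac{(n-1)t^2}{16\cdot(2\sqrt d)^2}\Big) = 2\exp\Big(-\frac{(n-1)t^2}{64 d}\Big),
\]
the claimed bound. There is no serious obstacle here; the only steps requiring (routine) care are evaluating the mean through the entrywise moment formula and keeping track of the numerical factor $16\cdot 4d = 64d$. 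One could instead run Herbst's argument directly from the log-Sobolev inequality of Proposition~\ref{LSU}(2), or transport the sharp spherical/orthogonal-group estimate through the $\sqrt 2$-Lipschitz map $\pist$ of Lemma~\ref{pistLip}; quoting the already-recorded estimate \eqref{LipschitzAbl} is simply the shortest route.
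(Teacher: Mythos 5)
Your proposal is correct and follows essentially the same route as the paper: expand $\lvert P-P_F\rvert^2 = 2(d-\langle P,P_F\rangle)$, compute the mean $2d(1-d/n)$ from the entrywise expectations of $P$, bound the (intrinsic) gradient via Proposition \ref{projGr}, and apply the Grassmann version of the Lipschitz bound \eqref{LipschitzAbl} with constant $16$. Your bookkeeping of the Lipschitz constant $2\sqrt{d}$ leading to $16\cdot 4d=64d$ is exactly what the stated bound requires.
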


\begin{proof}
Note that $\lvert P - P_F \rvert^2 = 2(d - \langle P, P_F \rangle)$, which has expectation $2d(1-d/n)$ since
\[
\int_{G_{n,d}} \langle P, P_F \rangle dP = d/n \mathrm{tr}(P_F) = d^2/n.
\]
Furthermore, the function $\mathbb{R}^{n \times n} \ni X \mapsto 2(d - \langle X, P_F \rangle)$ has derivative $-2 P_F$, which has Euclidean (Hilbert--Schmidt) norm $\sqrt{d}$. The result now follows immediately by combining \eqref{LipschitzAbl} (in the Grassmann version) and Proposition \ref{projGr}.
\end{proof}

It is possible to remove the square and consider fluctuations of $\lvert P - P_F \rvert$ by adapting some of the arguments used in the proof of Lemma \ref{transf} below. We skip the details, however, especially since we are rather interested in applying higher order concentration results in what follows.

To this end, let us consider another distance between two subspaces $E$ and $F$ of dimension $d$ which has been studied in \cite{WWF06} and subsequent papers. Here, we chose any orthonormal basis of $E$, say, $e_1, \ldots, e_d$, and set
\[
\mathrm{dist}(E,F) := \Big(\sum_{j=1}^d \mathrm{dist}(e_j, F)^2\Big)^{1/2},
\]
where for any $x \in \mathbb{R}^n$, $\mathrm{dist}(x,F) = \min_{y \in F} |x-y|$ is the usual point-to-subspace distance induced by the Euclidean norm. It is not hard to see that $\mathrm{dist}(E,F)$ does not depend on the choice of the orthonormal basis of $E$, cf.\ \cite[Theorem 1]{WWF06}.

As above, let us fix $F$ and consider the distance as a function of the subspace $E$. In view of the definition via orthonormal bases (and the invariance under change of the basis), we may then regard $\mathrm{dist}$ as a function on the Stiefel manifold $W_{n,d}$. That is, writing $A_{\bullet j}$ for the columns of $A \in W_{n,d}$, we shall study the fluctuations of
\[
\mathrm{dist}(A,F) := \Big(\sum_{j=1}^d \mathrm{dist}(A_{\bullet j}, F)^2\Big)^{1/2}
\]
under $\mu_{n,d}$. In this situation, the function fits into a framework which allows to derive concentration bounds via Hanson--Wright inequalities similarly as in \cite[Corollary 3.1]{RV13}, where distances of random vectors to a fixed subspace have been stuied. Under the uniform distribution on the Stiefel manifold, we obtain the following result.

\begin{proposition}\label{subspaces}
Let $F \subset \mathbb{R}^n$ be any fixed $d$-dimensional subspace of $\mathbb{R}^n$. Then, for any $t \ge 0$,
\[
\mu_{n,d} ( |\mathrm{dist}(A,F) - d/\sqrt{n}| \ge t) \le 2 \exp(-(n-2)t^2/C)
\]
with $C = 384 e^2/\log(2)$.
\end{proposition}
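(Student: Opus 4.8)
The plan is to exploit that the \emph{square} $\mathrm{dist}(A,F)^2$ is a quadratic form in the entries of $A$, apply the Stiefel Hanson--Wright inequality of Theorem \ref{HWI}, and then transfer the resulting concentration from $\mathrm{dist}(A,F)^2$ back to $\mathrm{dist}(A,F)$ by an elementary square-root argument in the spirit of \cite[Corollary 3.1]{RV13}. Writing $Q$ for the orthogonal projection of rank $d$ governing the point-to-subspace distance in the definition of $\mathrm{dist}(A,F)$, one has
\[
\mathrm{dist}(A,F)^2 = \sum_{j=1}^d |Q A_{\bullet j}|^2 = \tr(A^T Q A) = \mathrm{vec}(A)^T M \,\mathrm{vec}(A), \qquad M := I_d \otimes Q,
\]
so $M$ is symmetric with $\lVert M \rVert_{\mathrm{op}} = 1$ and $\lVert M \rVert_{\mathrm{HS}}^2 = \tr(M) = d^2$. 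Using $\int_{W_{n,d}} A_{ij}^2 \, d\mu_{n,d} = 1/n$ together with the uncorrelatedness of the entries recorded at the beginning of Section \ref{Sec:PolCh}, the mean comes out as $\mu_{n,d}(\mathrm{dist}(A,F)^2) = \tr(M)/n = d^2/n = m^2$ with $m := d/\sqrt{n}$.

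I would then apply Theorem \ref{HWI}(1) to the canonical extension $f_2(X) = \mathrm{vec}(X)^T M \mathrm{vec}(X)$, obtaining
\[
\mu_{n,d}\big(|\mathrm{dist}(A,F)^2 - m^2| \ge s\big) \le 2 \exp\Big(-\frac{\log 2}{128 e^2}\min\Big(\frac{(n-2)^2 s^2}{d^2}, (n-2)s\Big)\Big)
\]
for all $s \ge 0$. Since $\mathrm{dist}(A,F) \ge 0$, checking the upper and lower deviations separately yields the inclusion
\[
\{\, |\mathrm{dist}(A,F) - m| \ge t \,\} \subseteq \{\, |\mathrm{dist}(A,F)^2 - m^2| \ge \max(mt, t^2)\,\}
\]
for every $t \ge 0$. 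Inserting $s = \max(mt, t^2)$ and using $d^2 = m^2 n$ one sees that for $n \ge 3$ both entries of the minimum are at least $(n-2)t^2/3$: since $s \ge mt$, the Hilbert--Schmidt term is bounded below by $(n-2)^2 m^2 t^2 / d^2 = (n-2)^2 t^2/n \ge (n-2)t^2/3$, and since $s \ge t^2$, the operator-norm term is $(n-2)\max(mt,t^2) \ge (n-2)t^2$. In view of $384 = 3 \cdot 128$ this gives exactly $\mu_{n,d}(|\mathrm{dist}(A,F) - d/\sqrt n| \ge t) \le 2\exp(-(n-2)t^2/C)$ with $C = 384 e^2/\log 2$; the degenerate cases $n \le 2$ are trivial.

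The only genuinely delicate point is the square-root transfer: one must ensure that the operator-norm (``linear in $s$'') term of Hanson--Wright still produces Gaussian-in-$t$ decay after the substitution, which is precisely why the extra factor $3$ appears in the constant and why one keeps $\max(mt,t^2)$ rather than only $t^2$ (the latter would make the Hilbert--Schmidt term too small for small $t$). Everything else --- the identification of $M$, the values of $\lVert M\rVert_{\mathrm{op}}$, $\lVert M\rVert_{\mathrm{HS}}$, and the mean --- is routine. As a side remark, since $\mathrm{dist}(A,F) = \lVert Q A\rVert_{\mathrm{HS}}$ is $1$-Lipschitz, one could instead apply \eqref{LipschitzAbl} via Proposition \ref{projSt} and obtain a better constant, but only with centering at the a priori unknown $\mu_{n,d}(\mathrm{dist}(A,F))$; the Hanson--Wright route is chosen because it pins down the explicit centering $d/\sqrt n$.
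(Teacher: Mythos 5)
Your proof is correct and follows essentially the same route as the paper: the paper first proves the general Lemma \ref{transf} (concentration of $|MA|$ around $\lVert M \rVert_\mathrm{HS}/\sqrt{n}$) via Theorem \ref{HWI}(1) combined with the same Rudelson--Vershynin square-root transfer and the same bound $(n-2)^2/n \ge (n-2)/3$ producing the factor $3$ in $C = 3\cdot 128e^2/\log 2$, and then specializes to $M = I_d \otimes P_F$ with $\lVert M \rVert_\mathrm{HS}^2 = d^2$, $\lVert M\rVert_\mathrm{op}=1$. You simply carry out this argument directly for the specific matrix rather than through the intermediate general lemma, which changes nothing of substance.
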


To prove Proposition \ref{subspaces}, we need the following lemma, which is itself of independent interest since it generalizes \cite[Theorem 2.1]{RV13} (cf.\ also \cite[Proposition 2.2]{S20+}). In essence, given any matrix $M \in \mathbb{R}^{nd \times nd}$ it controls the fluctuations of the Euclidean norm of $MA$ around the Hilbert--Schmidt norm of $M$, rescaled by $n^{-1/2}$ according to the standard deviation of the entries of $A \sim \mu_{n,d}$.

\begin{lemma}\label{transf}
For any fixed $M \in \mathbb{R}^{nd \times nd}$, we have
\[
\mu_{n,d}(|\lvert MA \rvert - \lVert M \rVert_\mathrm{HS}/\sqrt{n} | \ge t) \le 2 \exp\Big(-\frac{(n-2)t^2}{C\lVert M \rVert_\mathrm{op}^2}\Big)
\]
with $C = 384 e^2/\log(2)$.
\end{lemma}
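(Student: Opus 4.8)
The plan is to follow the derivation of the sub-Gaussian concentration of $\lvert MX\rvert$ for random vectors with independent sub-Gaussian entries in \cite[Theorem 2.1]{RV13}, but to substitute the Stiefel Hanson--Wright inequality, Theorem \ref{HWI} (1), for the classical one. Write $\mu := \lVert M\rVert_\mathrm{HS}/\sqrt{n}$, which is the natural centering here because the entries of $A \sim \mu_{n,d}$ are uncorrelated with variance $1/n$, so that $\int_{W_{n,d}}\lvert MA\rvert^2\,d\mu_{n,d} = \mathrm{tr}(M^TM)/n = \mu^2$. The first step is to reduce the assertion to a statement about the quadratic form $f_2(A) := \lvert MA\rvert^2 = \mathrm{vec}(A)^T(M^TM)\mathrm{vec}(A)$, using the notation of \eqref{shn}. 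For $p, \mu \ge 0$ one has $\lvert p^2 - \mu^2\rvert = \lvert p - \mu\rvert(p+\mu) \ge \lvert p - \mu\rvert\max(\lvert p - \mu\rvert, \mu)$, since $(p+\mu)^2 \ge (p-\mu)^2$ and $p + \mu \ge \mu$. Taking $p = \lvert MA\rvert$ shows $\{\lvert\lvert MA\rvert - \mu\rvert \ge t\} \subseteq \{\lvert f_2 - \mu^2\rvert \ge s\}$ with $s := \max(t^2, t\mu)$.

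Next I would apply Theorem \ref{HWI} (1) to the symmetric matrix $\tilde M := M^TM \in \mathbb{R}^{nd\times nd}$. Here $\mathrm{tr}(\tilde M)/n = \mu^2$ is exactly the centering appearing there, $\lVert\tilde M\rVert_\mathrm{op} = \lVert M\rVert_\mathrm{op}^2$, and the singular value decomposition of $M$ gives $\lVert\tilde M\rVert_\mathrm{HS} \le \lVert M\rVert_\mathrm{op}\lVert M\rVert_\mathrm{HS}$. Consequently,
\[
\mu_{n,d}(\lvert f_2 - \mu^2\rvert \ge s) \le 2\exp\Big(-\frac{1}{C_1}\min\Big(\frac{(n-2)^2 s^2}{\lVert M\rVert_\mathrm{op}^2\lVert M\rVert_\mathrm{HS}^2}, \frac{(n-2)s}{\lVert M\rVert_\mathrm{op}^2}\Big)\Big),\qquad C_1 := \frac{128e^2}{\log(2)}.
\]

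Finally I would insert $s = \max(t^2, t\mu)$ and simplify, using $\mu^2 = \lVert M\rVert_\mathrm{HS}^2/n$. If $t \le \mu$, then $s = t\mu$: the second argument of the minimum is $(n-2)t\mu/\lVert M\rVert_\mathrm{op}^2 \ge (n-2)t^2/\lVert M\rVert_\mathrm{op}^2$ and the first equals $(n-2)^2 t^2/(n\lVert M\rVert_\mathrm{op}^2)$. If $t > \mu$, then $s = t^2$: the second argument equals $(n-2)t^2/\lVert M\rVert_\mathrm{op}^2$ and, since $\lVert M\rVert_\mathrm{HS}^2 = n\mu^2 < nt^2$, the first is at least $(n-2)^2 t^2/(n\lVert M\rVert_\mathrm{op}^2)$. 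In either case the minimum is at least $(n-2)^2 t^2/(n\lVert M\rVert_\mathrm{op}^2) \ge (n-2)t^2/(3\lVert M\rVert_\mathrm{op}^2)$ for $n \ge 3$, while for $n = 2$ the bound is trivial. This proves the lemma with $C = 3C_1 = 384e^2/\log(2)$. The only slightly delicate part of the argument is keeping track of the numerical constants through this case analysis together with the $\sqrt{n}$-rescaling forced by $\mathrm{Var}(A_{ij}) = 1/n$; everything else is routine.
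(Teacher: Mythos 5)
Your proof is correct and follows essentially the same route as the paper: apply Theorem \ref{HWI} (1) to $Q = M^TM$ with the bounds $\lVert Q \rVert_\mathrm{HS} \le \lVert M \rVert_\mathrm{op}\lVert M \rVert_\mathrm{HS}$, $\lVert Q \rVert_\mathrm{op} \le \lVert M \rVert_\mathrm{op}^2$, and then pass from the quadratic deviation to the deviation of $\lvert MA\rvert$ via the elementary inequality $\lvert p^2-\mu^2\rvert \ge \lvert p-\mu\rvert\max(\lvert p-\mu\rvert,\mu)$, arriving at the same constant $3\cdot 128e^2/\log(2)$. The only cosmetic difference is that the paper first normalizes $\lVert M\rVert_\mathrm{HS}=\sqrt{n}$ and rescales at the end, whereas you carry $\mu=\lVert M\rVert_\mathrm{HS}/\sqrt{n}$ through a short case analysis.
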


\begin{proof}
Let $Q = M^TM$, so that $|MA|^2 = A^TQA$ (recall \eqref{shn} and \eqref{notquf}) and $\mathrm{tr}(Q) = \lVert M \rVert_\mathrm{HS}^2$. Assuming that $\lVert M \rVert_\mathrm{HS} = \sqrt{n}$, Theorem \ref{HWI} (1) yields
\begin{align*}
    \mu_{n,d}(|\lvert MA \rvert^2 - 1| \ge t) &\le 2 \exp\Big(-\frac{1}{C}\min\Big(\frac{(n-2)^2t^2}{n\lVert M \rVert_\mathrm{op}^2}, \frac{(n-2)t}{\lVert M \rVert_\mathrm{op}^2}\Big)\Big)\\
    &\le 2 \exp\Big(-\frac{n-2}{3C\lVert M \rVert_\mathrm{op}^2}\min(t^2,t)\Big),
\end{align*}
where the first step follows from $\norm{Q}_{\mathrm{HS}}^2 \le \norm{M}_{\mathrm{op}}^2 \norm{M}_{\mathrm{HS}}^2 = n \norm{M}_{\mathrm{op}}^2$ and $\norm{Q}_{\mathrm{op}} \le \norm{M}_{\mathrm{op}}^2$, and $C = 128 e^2/\log(2)$.

Now, as in \cite{RV13}, we use the inequality $\abs{z - 1} \le \min( \abs{z^2 -1}, \abs{z^2 - 1}^{1/2})$, giving for any $t \ge 0$
\[
\mu_{n,d} ( |\lvert MA \rvert - 1| \ge t \Big) \le \mu_{n,d} (| \lvert MA \rvert^2 - 1| \ge \max(t,t^2)).
\]
Combining this with the first step and using that $\min(\max(t,t^2)^2, \max(t,t^2)) = t^2$ yields
\[
\mu_{n,d} (|\lvert MA \rvert - 1| \ge t) \le 2 \exp\Big(-\frac{(n-2)t^2}{3C\lVert M \rVert_\mathrm{op}^2}\Big),
\]
i.\,e.\ the claim for for $\lVert M \rVert_{\mathrm{HS}} = \sqrt{n}$. The general case now follows by considering $\tilde{M} \coloneqq \sqrt{n} M \norm{M}_{\mathrm{HS}}^{-1}$, noting that
\[
\mu_{n,d} (|\lvert MA \rvert - \lVert M \rVert_{\mathrm{HS}}/\sqrt{n}| \ge t) = \mu_{n,d} ( |\lvert\tilde{M}A \rvert - 1| \ge \sqrt{n}t\lVert M \rVert_{\mathrm{HS}}^{-1}).
\]
\end{proof}

\begin{proof}[Proof of Proposition \ref{subspaces}]
Let $P_F$ be the orthogonal projection onto $F$, and denote by $A_{\bullet j}$ the columns of $A$. Then, $\mathrm{dist}(A_{\bullet j}, F) = |P_FA_{\bullet j}|$. In particular, if $M := I_d \otimes P_F$ is the $nd \times nd$ block diagonal matrix whose diagonal consists of $d$ copies of the matrix $P_F$, $\mathrm{dist}(A,F) = |MA|$, so that we may apply Lemma \ref{transf} (1). Here we use that $\lVert M \rVert_\mathrm{HS}^2 = d \lVert P_F \rVert_\mathrm{HS}^2 = d^2$ and moreover that since $d$ eigenvalues of $P_F$ are $1$ and all other eigenvalues are $0$, we have $\lVert M \rVert_\mathrm{op} = \lVert P_F \rVert_\mathrm{op} = 1$.
\end{proof}

In fact, it is not even necessary to consider subspaces $F$ of the same dimension $d$. If $F \subset \mathbb{R}^n$ is a subspace of any dimension $m < n$, the definitions of $\mathrm{dist}(E,F)$ and $\mathrm{dist}(A,F)$ continue to hold (even if $\mathrm{dist}(E,F)$ is no longer symmetric in $E$ and $F$ in this case), and with minimal adaptions in the proof of Proposition \ref{subspaces}, one can show that
\[
\mu_{n,d} ( |\mathrm{dist}(A,F) - \sqrt{md/n}| \ge t) \le 2 \exp(-(n-2)t^2/C)
\]
with $C = 384 e^2/\log(2)$.

\appendix
\section{Proofs of the results from Section \ref{sec:DerGr}}

\begin{proof}[Proof of Proposition \ref{Prop9.1.3Gr}]
Considering the function
\[
\psi_V(P) := \langle \nabla_G f(P), V \rangle = \langle [P,[P,\nabla f(P)]], V \rangle,
\]
let us calculate $\nabla \psi_V(P)$. Choosing some extension of $f$ onto an open neighborhood of $G_{n,d}$ in $\mathbb{R}^{n \times n}$, the function
\begin{align*}
\psi_V(X) &:= \langle [X,[X,\pis(\nabla f(X))]], V \rangle\\
&= \mathrm{tr}(V^T(X\pis(\nabla f(X)) + \pis(\nabla f(X))X - 2 P\pis(\nabla f(X)) X))
\end{align*}
is a smooth extension of $\psi_V(P)$. Note that $\pis$ has derivative
\[
\frac{d\pis(Y)}{dY} = \frac{1}{2}\frac{d(Y + Y^T)}{dY} = \frac{1}{2}(I_{n^2} + K_{n,n}).
\]
As a consequence, by chain rule,
\[
\frac{d\pis(\nabla f(X))}{dX} = \frac{1}{2}(I_{n^2} + K_{n,n})f''(X).
\]

Let us now calculate
\[
\frac{d}{dX}[V^T(X\pis(\nabla f(X)) + \pis(\nabla f(X))X - 2 X\pis(\nabla f(X)) X)].
\]
First, by Leibniz rule,
\begin{align*}
&\quad \frac{d}{dX} [V^TX\pis(\nabla f(X))]\\
&= (\pis(\nabla f(X)) \otimes I_n) (I_n \otimes V^T) + \frac{1}{2}(I_n \otimes (V^TX)) (I_{n^2} + K_{n,n})f''(X)\\
&= \pis(\nabla f(X)) \otimes V^T + \frac{1}{2}(I_n \otimes (V^TX)) (I_{n^2} + K_{n,n})f''(X).
\end{align*}
Next, note that similarly,
\[
\frac{d(\pis(\nabla f(X))X)}{dX} = \frac{1}{2} (X^T \otimes I_n) (I_{n^2} + K_{n,n})f''(X) + I_n \otimes \pis(\nabla f(X)).
\]
In particular, we obtain that
\begin{align*}
&\quad \frac{d}{dX}[V^T\pis(\nabla f(X))X]\\
&= (I_n \otimes V^T)\Big(\frac{1}{2} (X^T \otimes I_n) (I_{n^2} + K_{n,n})f''(X) + I_n \otimes \pis(\nabla f(X))\Big)\\
&= \frac{1}{2} (X^T \otimes V^T) (I_{n^2} + K_{n,n})f''(X) + I_n \otimes (V^T\pis(\nabla f(X))).
\end{align*}
Finally,
\begin{align*}
&\quad \frac{d}{dX}[V^TX\pis(\nabla f(X)) X]\\
&= ((X^T\pis[\nabla f(X)]) \otimes I_n) \frac{d(V^TX)}{dX} + (I_n \otimes (V^TX)) \frac{d}{dX}[\pis(\nabla f(X)) X]\\
&= ((X^T\pis[\nabla f(X)]) \otimes I_n) (I_n \otimes V^T)\\
&\quad + (I_n \otimes (V^TX)) \Big(\frac{1}{2} (X^T \otimes I_n) (I_{n^2} + K_{n,n})f''(X) + I_n \otimes \pis(\nabla f(X))\Big)\\
&= (X^T\pis[\nabla f(X)]) \otimes V^T + \frac{1}{2} (X^T \otimes (V^TX)) (I_{n^2} + K_{n,n})f''(X)\\
&\quad + I_n \otimes (V^TX\pis(\nabla f(X))).
\end{align*}

Consequently, by chain rule,
\begin{align*}
    &\quad \frac{d}{dX}\mathrm{tr}(V^T(X\pis(\nabla f(X)) + \pis(\nabla f(X))X - 2 X\pis(\nabla f(X)) X))\\
    &= \vec(I_n)^T\Big(\pis(\nabla f(X)) \otimes V^T + \frac{1}{2}(I_n \otimes (V^TX)) (I_{n^2} + K_{n,n})f''(X)\\
    &\quad + \frac{1}{2} (X^T \otimes V^T) (I_{n^2} + K_{n,n})f''(X) + I_n \otimes (V^T\pis(\nabla f(X)))\\
    &\quad - 2(X^T\pis[\nabla f(X)]) \otimes V^T - (X^T \otimes (V^TX)) (I_{n^2} + K_{n,n})f''(X)\\
    &\quad - 2I_n \otimes (V^TX\pis(\nabla f(X)))\Big).
\end{align*}
Switching from $d/(dX)$ to $\vec(\nabla)$ (and thus, transposing), we obtain
\begin{align*}
    \vec(\nabla\psi_V(X))
    &= \Big(\pis(\nabla f(X)) \otimes V + \frac{1}{2}f''(X)(I_{n^2} + K_{n,n})(I_n \otimes (X^TV))\\
    &\quad + \frac{1}{2} f''(X)(I_{n^2} + K_{n,n})(X \otimes V) + I_n \otimes (\pis(\nabla f(X))V)\\
    &\quad - 2(\pis(\nabla f(X))X) \otimes V - f''(X) (I_{n^2} + K_{n,n})(X \otimes (X^TV))\\
    &\quad - 2I_n \otimes (\pis(\nabla f(X))X^TV)\Big)\vec(I_n).
\end{align*}

Let us simplify this further. We have
\begin{align*}
(\pis(\nabla f(X)) \otimes V)\vec(I_n) &= \vec[V\pis(\nabla f(X))],\\
(I_n \otimes (\pis(\nabla f(X))V))\vec(I_n) &= \vec[(\pis(\nabla f(X))V)],\\
((\pis(\nabla f(X))X) \otimes V)\vec(I_n) &= \vec[VX^T\pis(\nabla f(X))],\\
(I_n \otimes (\pis(\nabla f(X))X^TV))\vec(I_n) &= \vec[\pis(\nabla f(X))X^TV)].
\end{align*}
Recalling that $\pis = (I_{n^2} + K_{n,n})/2$, we may further rewrite
\begin{align*}
\frac{1}{2}f''(X)(I_{n^2} + K_{n,n})(I_n \otimes (X^TV))\vec(I_n) &= f''(X)\pis \vec(X^TV),\\
\frac{1}{2} f''(X)(I_{n^2} + K_{n,n})(X \otimes V)\vec(I_n) &= f''(X)\pis \vec(VX^T),\\
f''(X) (I_{n^2} + K_{n,n})(X \otimes (X^TV))\vec(I_n) &= f''(X)\pis \vec(X^TVX^T)
\end{align*}
Summing up and restricting to $P \in G_{n,d}$ (which is symmetric, in particular),
\begin{align*}
    \vec(\nabla\psi_V(P))
    &= f''(P)\pis\vec([P,[P,V]]) + 2 \vec[\pis(V\pis(\nabla f(P)))]\\
    &\quad - 4\vec[\pis(VP\pis(\nabla f(P)))].
\end{align*}

Next we calculate the intrinsic derivatives. First, we need to project onto $\Rns$ by applying $\pis$. Using the notation from Proposition \ref{2ndOrDerGr}, this yields
\begin{align*}
    \pis\nabla\psi_V(P)
    &= \pis f''(P)\pis\vec([P,[P,V]]) + 2 \vec[\pis(V\pis(\nabla f(P)))]\\
    &\quad - 4\vec[\pis(VP\pis(\nabla f(P)))].
\end{align*}
Now we project onto $T_P$ by applying $\pi_P$. This yields
\begin{align*}
    \nabla_G\psi_V(P)
    &= \pi_P\pis f''(P)\pis\pi_P(V) + 2\pi_P \vec[\pis(V\pis(\nabla f(P)))]\\
    &\quad - 4\pi_P\vec[\pis(VP\pis(\nabla f(P)))].
\end{align*}
By Proposition \ref{2ndOrDerGr}, it follows that
\begin{align*}
    f_G''(P)V &= \nabla_G\psi_V(P) - 2\pi_P \vec[\pis(V\pis(\nabla f(P)))]\\
    &\quad + 4\pi_P\vec[\pis(VP\pis(\nabla f(P)))] - [P,[\pis(\nabla f(P)),\pi_PV]].
\end{align*}
Now, a direct calculation yields
\begin{align*}
    &\quad - 2\pi_P \vec[\pis(V\pis(\nabla f(P)))] + 4\pi_P\vec[\pis(VP\pis(\nabla f(P)))]\\
    &\quad - [P,[\pis(\nabla f(P)),\pi_PV]]\\
    &= - [P,[\pi_P \nabla f(P), V]] = - [P,[\nabla_G f(P), V]],
\end{align*}
which finishes the proof.
\end{proof}

\begin{proof}[Proof of Proposition \ref{Prop9.2.1Gr}]
Similarly as in the proof of Proposition \ref{Prop9.2.1St}, it follows from Proposition \ref{Prop9.1.3Gr} applied to any $V \in \Rns$ such that $|V| \equiv \lVert V \rVert_\mathrm{HS} = 1$ together with compactness arguments that $|\nabla_G f|$ has finite Lipschitz semi-norm.

To show the identity for second order modulus of gradient, fix $P \in G_{n,d}$ and note that by the definition of the intrinsic gradient and Proposition \ref{Prop9.1.3Gr}, we have
\[
\langle \nabla_Gf(P'),V\rangle = \langle \nabla_Gf(P),V\rangle + \langle \tilde{V}, P'-P\rangle + o(|P'-P|),
\]
where
\[
\tilde{V} = f''_G(P)V + [P,[\nabla_Gf(P),V]].
\]
Moreover, using the integral form of the Taylor formula and the compactness of $G_{n,d}$, which implies that every continuous function is already uniformly continuous, we see that the remainder term in the Taylor expansion can be bounded independently of $V \in \Rns$ such that $|V| = 1$, i.\,e.
\[
\sup_{|V| = 1} |\langle \nabla_Gf(P'),V\rangle - \langle \nabla_Gf(P),V\rangle - \langle \tilde{V}, P'-P\rangle| \le \varepsilon(|P'-P|),
\]
where $\varepsilon(t)$ is some function which satisfies $\varepsilon(t) \to 0$ as $t \to 0$.

We now rewrite the Taylor formula as
\begin{equation}\label{TayFGr}
\langle \nabla_Gf(P'),V\rangle = \langle \nabla_Gf(P) + L,V\rangle + o(|P'-P|)
\end{equation}
by choosing a suitable $L$ such that $\langle L, V \rangle = \langle \tilde{V}, P'-P\rangle$. Obviously,
\[
\langle f_G''(P)V, P'-P \rangle = \langle f''_G(P)(P'-P), V \rangle.
\]
Moreover, write
\[
[P,[\nabla_Gf(P),V]] = P\nabla_Gf(P)V - PV\nabla_Gf(P) - \nabla_Gf(P)VP + V\nabla_Gf(P)P.
\]
Adapting the identities \eqref{genrul} to $U,W \in \mathbb{R}^{n \times n}$ and $V,X (= P'-P) \in \Rns$, we obtain
\[
\langle U^TXW^T, V \rangle = \langle UVW, X \rangle = \langle WXU, V \rangle,
\]
where we have used the symmetry of $V$ and $X$. If we apply the first identity to $P\nabla_Gf(P)V$ and $PV\nabla_Gf(P)$ and the second one to $\nabla_Gf(P)VP$ and $V\nabla_Gf(P)P$, we obtain
\[
\langle [P,[\nabla_Gf(P),V]], P'-P \rangle = 2 \langle [\nabla_Gf(P),P(P'-P)], V \rangle,
\]
so that altogether, we may also set
\[
L := f''_G(P)(P'-P) + 2 [\nabla_Gf(P),P(P'-P)].
\]

Now we take an absolute value on both sides of the Taylor formula \eqref{TayFGr} and take the supremum over all symmetric $V$ such that $\mathrm{vec}(V) \in S^{nd-1}$. This leads to
\[
|\nabla_Gf(P')| = |\nabla_Gf(P) + L| + o(|P'-P|).
\]
Next, we write
\[
|\nabla_Gf(P) + L|^2 = |\nabla_Gf(P)|^2 + 2 \langle \nabla_Gf(P), L \rangle + |L|^2.
\]
Note that as a trace of the product of a symmetric and an anti-symmetric matrix, $\langle \nabla_Gf(P),[\nabla_Gf(P),P(P'-P)] \rangle = 0$. Therefore,
\[
\langle \nabla_G f(P), L \rangle = \langle \nabla_Gf(P), f_G''(P)(P'-P) \rangle = \langle U, P'-P \rangle
\]
with $U := f_G''(P)\nabla_Gf(P)$. Since $|L|^2 = O(|P'-P|^2)$, we obtain
\[
|\nabla_Gf(P) + L|^2 = |\nabla_Gf(P)|^2 + 2 \langle U, P'-P \rangle + o(|P'-P|).
\]

If $|\nabla_Gf(P)| > 0$, it therefore follows that
\[
|\nabla_Gf(P) + L| = |\nabla_Gf(P)| + |\nabla_Gf(P)|^{-1} \langle U, P'-P \rangle + o(|P'-P|).
\]
Hence,
\[
|\nabla_Gf(P')| - |\nabla_Gf(P)| = |\nabla_Gf(P)|^{-1} \langle U, P'-P \rangle + o(|P'-P|)
\]
and thus
\begin{align*}
\limsup_{P'\to P} \frac{\big||\nabla_Gf(P')| - |\nabla_Gf(P)|\big|}{|P'-P|}
&= |\nabla_Gf(P)|^{-1} \limsup_{P' \to P} \frac{\big|\langle U, P'-P \rangle\big|}{|P'-P|}\\
&= |\nabla_Gf(X)|^{-1} |\nabla_G\psi_U(P)|,
\end{align*}
where $\psi_U (P) := \langle U, P \rangle$. As noted after Proposition \ref{2ndOrDerGr}, it holds that $U \in T_P$, so that $\nabla_G \psi_U(P) = U$. Thus, we arrive at
\[
|\nabla^{(2)}_Gf(P)| = |\nabla_Gf(P)|^{-1} |f_G''(P)\nabla_Gf(P)|
\]
if $|\nabla_Wf(P)| > 0$.

It remains to consider the case where $|\nabla_Gf(P)| = 0$. Here, $L = f_G''(P)(P'-P)$, and the Taylor formula reads
\[
|\nabla_G f(P')| = |L| + o(|P'-P|).
\]
It follows that
\begin{align*}
    |\nabla^{(2)}_Gf(P)| &= \limsup_{P' \to P} \frac{|\nabla_Gf(P')|}{|P'-P|}
    = \limsup_{P' \to P} \frac{|f_G''(P)(P'-P)|}{|P'-P|}\\
    &= \limsup_{V \to 0, V \in T_P^\perp} \frac{|f_G''(P)V|}{|V|} = \lVert f_G''(P) \rVert_\mathrm{op},
\end{align*}
which finishes the proof.
\end{proof}

\end{document}